\journal{arXiv.org}
\tikzstyle{nodino}=[circle,draw,fill,inner sep=0pt,minimum size=0.5mm]
\tikzstyle{infinito}=[circle,inner sep=0pt,minimum size=0mm]
\tikzstyle{nodo}=[circle,draw,fill,inner sep=0pt,minimum size=1mm]
\definecolor{dkgreen}{rgb}{0,0.6,0}
\definecolor{gray}{rgb}{0.5,0.5,0.5}
\definecolor{mauve}{rgb}{0.58,0,0.82}
\tiny\color{gray},
\newcommand{\N}{\mathbb{N}}
\newcommand{\Q}{\mathbb{Q}}
\newcommand{\R}{\mathbb{R}}
\newcommand{\C}{\mathbb{C}}
\DeclareMathOperator*{\sech}{sech}
\DeclareMathOperator*{\cn}{cn}
\DeclareMathOperator*{\dn}{dn}
\DeclareMathOperator*{\arccn}{arc\,cn}
\DeclareMathOperator*{\lspan}{span}
\DeclareMathOperator*{\sgn}{sgn}
\newtheorem{theorem}{Theorem}[section]
\newtheorem{lemma}[theorem]{Lemma}
\newtheorem{proposition}[theorem]{Proposition}
\newdefinition{remark}[theorem]{Remark}
\newproof{proof}{Proof}
\begin{document}

\begin{frontmatter}

\title{Standing waves for the NLS on the double-bridge graph and a rational-irrational dichotomy}


\author[bicocca]{Diego Noja}
\ead{diego.noja@unimib.it}

\author[bicocca]{Sergio Rolando}
\ead{sergio.rolando@unimib.it}

\author[bicocca]{Simone Secchi}
\ead{simone.secchi@unimib.it}

\address[bicocca]{Dipartimento di Matematica e Applicazioni, Universit\`a
	di Milano Bicocca,  via R. Cozzi 55, 20125 Milano, Italy}

\begin{abstract}
We study a boundary value problem related to the search of standing waves  for the nonlinear Schr\"odinger equation (NLS) on graphs. 
Precisely we are interested in characterizing the standing waves of NLS posed on the  {\it double-bridge graph}, in which two semi-infinite half-lines are attached at a circle at different vertices. At the two vertices the so-called Kirchhoff boundary conditions are imposed. The configuration of the graph is characterized by two lengths, $L_1$ and $L_2$, and we are interested in the existence and properties of standing waves of given frequency $\omega$. For every $\omega>0$ only solutions supported on the circle exist (cnoidal solutions), and only for a rational value of $L_1/L_2$; they can be extended to every $\omega\in \R\ .$ We study, for $\omega<0$, the solutions periodic on the circle but with nontrivial components on the half-lines. The problem turns out to be equivalent to a nonlinear boundary value problem in which the boundary condition depends on the spectral parameter $\omega$. After classifying the solutions with rational $L_1/L_2$, we turn to $L_1/L_2$ irrational showing that there exist standing waves only in correspondence to a countable set of frequencies $\omega_n$. Moreover we show that the frequency sequence $\{\omega_n\}_{n \geq 1}$ has a cluster point at $-\infty$ and it admits at least a finite limit point, in general non-zero. Finally, any negative real number can be a limit point of a set of admitted frequencies up to the choice of a suitable irrational geometry $L_1/L_2$ for the graph. These results depend on basic properties of diophantine approximation of real numbers.
\end{abstract}

\begin{keyword}
Quantum graphs \sep non-linear Schr\"odinger equation \sep standing waves
\MSC[2010] 35Q55 \sep 81Q35 \sep 35R02
\end{keyword}

\end{frontmatter}


\section{Introduction and main results} \label{SEC:intro}
The analysis of nonlinear equations on graphs, especially nonlinear Schr\"odinger equation (NLS), is a new and rapidly growing research subject, which already produced a wealth of interesting results (for review and references see \cite{CFN17}). Roughly speaking a metric graph is a structure built by edges connected at vertices. Some of the edges may be of infinite length. On the edges a differential operator is given, with suitable boundary condition at vertices which makes it self-adjoint. This generates a dynamics (Wave, Heat, Schr\"odinger, Dirac or other).
The attractive feature of these mathematical models is the complexity allowed by the graph structure, joined with the one dimensional character of the equations. While they are certainly an oversimplification in many real problems coming from Physics in which geometry and transversal directions are not negligible, they however appear indicative of several dynamically interesting phenomena non typical or not expected in more standard frameworks. This is already true at the level of the linear Schr\"odinger equation, the so called Quantum Graphs theory, where an enormous literature exists (\cite{BerKu} and reference therein). 
The most studied topic in the context of nonlinear Schr\"odinger  equation is certainly existence and characterization of standing waves \cite{GW1,GW2,PS16}. More particularly several results are known about ground states (standing waves of minimal energy at fixed mass, i.e. $L^2$ norm) as regard existence, non existence and stability properties, depending on various characteristics of the graph \cite{acfn-aihp, AST1,AST2, AST3,CFN17}. A distinguished role is played by topology, by the vertex conditions and by the possible presence of external potentials.\\ In this paper we are interested in a special example which reveals an unsuspected fine structure of the set of standing waves when the metric properties of the graph are taken in account, and a relation with nonstandard boundary value problem and their solutions.
\\
Namely we consider a metric graph $\mathcal{G}$ made up of two half lines joined by
two bounded edges, i.e., a so-called \textit{double-bridge graph} (see Fig.\ref{db}). We may
also think at $\mathcal{G}$ as a ring with two half lines attached in two
distinct vertices. The half lines will be both identified with the interval 
$[0,+\infty )$, while the bounded edges will be represented by two bounded
intervals of lengths $L_1>0$ and $L_2 \geq L_1$, precisely 
$\left[0,L_1\right] $ and $[L_1,L]$ with $L=L_1+L_2 $. 
\vskip10pt

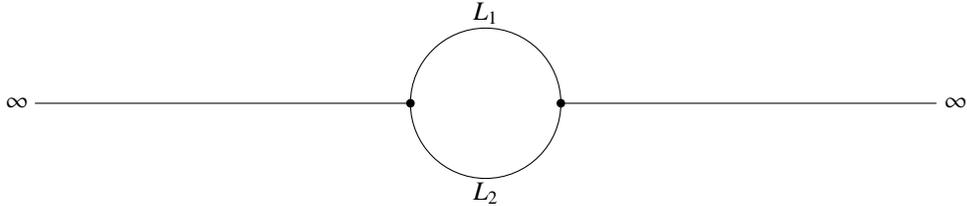
\begin{figure}[h]
	\begin{center}
		\begin{tikzpicture}
		\node at (-6,0)[infinito](1){};
		\node at (-1,0)[nodo](2){};
		\node at (1,0)[nodo](3){};
		\node at (6,0)[infinito](4){};
		\draw[-] (1)--(2);
		\draw[-] (2) arc(180:-180:1);
		\draw[-] (3)--(4);
		\node at (-6.23,0){$\infty$};
		\node at (6.25,0){$\infty$};
		\node at (0,1.2){$L_1$};
		\node at (0,-1.2){$L_2$};
		\end{tikzpicture}
		\caption{The double-bridge graph.}
		   \label{db}
	\end{center}
\end{figure}

\vskip10pt
A function $\psi $ on $\mathcal{G}$ is a cartesian product 
$\psi(x_1,...,x_4)=(\psi_1\left(x_1\right) ,\ldots,\psi_4\left(x_4\right))$ 
with $x_j\in I_j$ for $j=1,\ldots,4$, where $I_1=[0,L_1]$, $I_2=[L_1,L]$ and $I_3=I_4=[0,+\infty)$. \\
Then a Schr{\"{o}}dinger operator $H_{\mathcal{G}}$ on $\mathcal{G}$ is defined as 
\begin{equation}\label{operator}
H_{\mathcal{G}}\psi \left( x_1,\ldots,x_4\right) 
=\left( -\psi_1^{\prime\prime }\left( x_1\right) ,\ldots,-\psi_4^{\prime \prime }\left(x_4\right) \right),
\quad x_j\in I_j,
\end{equation}
with domain $D\left( H_{\mathcal{G}}\right) $ given by the functions $\psi $
on $\mathcal{G}$ whose components satisfy $\psi_j\in H^{2}(I_j)$ 
together with the so-called \textit{Kirchhoff boundary conditions}, i.e., 
\begin{equation}
\psi_1(0)=\psi_2(L)=\psi_3(0),\quad \psi_1(L_1)=\psi_2(L_1)=\psi_4(0),  
\label{continuity}
\end{equation}
\begin{equation}
\psi_1^{\prime}(0)-\psi_2^{\prime}(L)+\psi_3^{\prime}(0)
=\psi_1^{\prime}(L_1)-\psi_2^{\prime}(L_1)-\psi_4^{\prime}(0)=0.
\label{derivatives}
\end{equation}
As it is well known, the operator $H_{\mathcal{G}}$ is self-adjoint on the domain $D(H_{\mathcal{G}})$, and it generates a unitary Schr\"odinger dynamics. Essential information about its spectrum is given in Appendix.\\
We perturb this linear dynamics with a focusing cubic term, namely we consider the following nonlinear Schr{\"{o}}dinger equation on $\mathcal{G}$
\begin{equation}
i\frac{d \psi_t}{dt}=H_{\mathcal{G}}\psi_t-\left| \psi_t\right|^2\psi_t  
\label{NLS}
\end{equation}
where the nonlinear term $|\psi_t|^2\psi_t$ is a shortened notation for 
$(|\psi_{1,t}|^2\psi_{1,t},\ldots,|\psi_{4,t}|^2\psi_{4,t})$. 
Hence Eq. (\ref{NLS}) is a system of scalar NLS equation on the intervals $I_j$ coupled
through the Kirchhoff boundary conditions (\ref{continuity})-(\ref{derivatives}) 
included in the domain of $H_{\mathcal{G}}$. \\
On rather general grounds it can be shown that this problem enjoys well-posedness both in strong sense and in the energy space (see in particular \cite[Section 2.6]{CFN17}).\\
%
%
%
%
%
%
%
We want to study standing waves of Eq.~\eqref{NLS}, i.e., its
solutions of the form $\psi_t=e^{-i\omega t}u(x,\omega)$ where $\omega \in \R$
and $u$ is a purely spatial function on $\mathcal{G}$.
In the sequel for the sake of brevity we will often omit the explicit dependence on $\omega$.
Writing the equation component-wise, we get the following scalar
problem: 
\begin{equation}\label{nleigen-pb}
\left\{ 
\begin{array}{ll}
-u^{\prime\prime}_j-u^3_j=\omega u_j, & u_j\in H^2(I_j) \medskip \\ 
u_1(0) = u_2(L) = u_3(0),\quad & u_1(L_1) = u_2(L_1) = u_4(0) \medskip \\ 
u^{\prime}_1(0) - u^{\prime}_2(L) + u^{\prime}_3(0)=0,\quad & 
u^{\prime}_1(L_1) - u^{\prime}_2(L_1) - u^{\prime}_4(0)=0.
\end{array}
\right. 
\end{equation}
In \cite{AST1,AST2} it is shown, among many other things, that the focusing NLS on a double bridge graph has no ground state, i.e. no standing wave exists that minimizes the energy at fixed $L^2$-norm (see also \cite{AST3} for the critical power NLS). In the recent \cite{AST4} information on positive bound states which are not ground states is given. In this paper we are interested, instead, in studying {\it non positive} standing waves profiles. 
\\
We discuss first the case $\omega>0$, taking also the opportunity to fix notations and to recall some elementary but useful facts. It is well known that non vanishing $L^2$ solutions of the stationary focusing NLS on the half-line do not exist. So any solution of our problem is supported on the circle. This further condition forces Dirichlet boundary conditions at the two vertices and makes the above problem (\ref{nleigen-pb}) overdetermined: a solution $u$ belongs to $H_{\mathrm{per}}^{2}([0,L])$ necessarily (see definition (\ref{Hper}) below), and moreover it has to satisfy Dirichlet boundary conditions at $0,L_1$ and $L$. Periodic solutions of stationary NLS on the interval are Jacobi snoidal, cnoidal and dnoidal functions (for a treatise on the Jacobian elliptic functions, we refer e.g. to \cite{handbook, Lawden}). Only cnoidal and dnoidal functions satisfy the {\it focusing} NLS on the circle, and the dnoidal functions do not vanish anywhere and so we rule out them.\\ More precisely, the cnoidal function $v(y):=\cn(y;k)$ with parameter $k$ solves the equation
\begin{equation}
\label{NLScnoidal}
-v''(y)-2k^2v^3(y)=(1-2k^2)v(y)\ .
\end{equation} 
Up to translations it is the only periodic solution of \eqref{NLScnoidal} oscillating around zero and its minimal period is given by 
\begin{equation}
T(k)=4K(k)=4\int_{0}^{1}\frac{dt}{\sqrt{\strut(1-t^2)(1-k^2 t^2)}}
\end{equation}
where $K(k)$ is the so called complete elliptic integral of first kind. 
There results $\cn(0;k)=1$, and $\cn(T/4;k)=0$ gives the first zero in $[0,T]$.
The scaling
\begin{equation}
\label{scaling}
u(x)=\sqrt{2}kpv(px),\ \ \ \ \ \ p=\sqrt{\frac{\omega}{1-2k^2}},\ \ \ \ k\in (0,1/2),\ \ \omega>0
\end{equation}
shows that 
\begin{equation}
u(x)= 
\sqrt{\frac{2\omega k^{2}}{1-2k^{2}}}\ 
\cn \left( \sqrt{\frac{\omega}{1-2k^{2}}}\,x\, ;\, k\right) 
\end{equation} 
solves for every $k\in (0,1/\sqrt{2})$ the equation
\begin{equation}
\label{NLScnoidalomega}
-u''-u^3=\omega u .
\end{equation}
It is a periodic solution of \eqref{NLScnoidalomega} oscillating around zero and its minimal period is given by
\begin{equation}
\label{period}
T_{\omega}(k)=4\sqrt{\frac{1-2k^{2}}{\omega}}K(k).
\end{equation}
Assuming periodicity on $[0,L]$ ($u\in H_{\mathrm{per}}^{2}([0,L])$) gives a countable family of periodic cnoidal functions $u_n$ with parameter $k_n$ defined by the condition that the length of the interval is an integer multiple of the period, $nT_{\omega}(k_n)=L$. A translation of a quarter of period along the circle allows to satisfy the Dirichlet conditions at $0$ and $L$. Up to now we have a sequence of parameters $k_n$ and functions $u_{n,\omega}^\pm$:
\begin{equation}
\label{cnoidalomega}
u_{n,\omega}^\pm(x)=\sqrt{\frac{2\omega k_n^{2}}{1-2k_n^{2}}}\ 
\cn \left( \sqrt{\frac{\omega}{1-2k_n^{2}}}(x \pm T_{\omega}(k_n)/4) ; k_n\right),\quad nT_{\omega}(k_n)=L .
\end{equation} 
Now it is clear that the further Dirichlet condition at $L_1$ can be satisfied if and only if there exists $m<n$ such that $mT_{\omega}(k_n)/2=L_1$, i.e $L_1/L=m/(2n)$ is a rational number. When we add this further condition, an infinite strict subset of the above families of cnoidal functions $u_{n,\omega}^\pm$ satisfies the complete problem \eqref{nleigen-pb} for every positive $\omega$, namely the ones with $n\in\N q_0$, where we denote by $p_0, q_0$ the unique coprime naturals such that $L_1/L=p_0/(2q_0)$ (cf. Remark \ref{q_0}). 
These solutions are supported on the circle and disappear when the length  $L_1$ is not a rational multiple of the length $L$. Moreover, as expected and shown in the Appendix, they bifurcate from the linear eigenvectors of the double bridge quantum graph in the limit of small amplitude.

A similar argument shows that for $\omega=0$ the solutions of \eqref{nleigen-pb} only exist if $L_1/L\in\Q$ 
and form a sequence of suitably rescaled and translated cnoidal functions (cf. \cite{CFN15} for details).

The situation is completely different when we consider solutions with $\omega<0$.
In the first place the above families, which we indicate again as $u_{n,\omega}^\pm$, can be continued to every $\omega<0$ just posing
\begin{equation}
\label{cnoidalomeganeg}
u_{n,\omega}^\pm(x)=\sqrt{\frac{2|\omega| k_n^{2}}{2k_n^{2}-1}}\ 
\cn \left( \sqrt{\frac{|\omega|}{2k_n^{2}-1}}(x \pm T_{\omega}(k_n)/4) ; k_n\right),\ \ \ \  k_n\in (1/\sqrt{2},1).
\end{equation} 
So there is an infinite number of global bifurcation branches $\{(\omega,u_{h q_0,\omega}^\pm:\omega<0\}$, $h\in\N$, originating in correspondence of the linear eigenvalues $\lambda_h$, extending through the range $(-\infty,\lambda_h)$ and compactly supported on the graph. 
We stress again that this infinite family of global bifurcation branches exists only when the ratio $L_1/L$ is rational.
These solutions are the only ones with $u_3=u_4=0$.
However, many more solutions are expected to arise, since for $\omega<0$ non vanishing solutions on the two half-lines are admissible. For example, one can shift any cnoidal solution on the ring, with the results of breaking the continuity at the vertices, and then attach to this shifted cnoidal solution a half-soliton on each tail with the correct height (positive or negative), so to restore continuity. Due to the fact that the half soliton has vanishing derivative at vertex, the Kirchhoff condition is also satisfied. So, at $\omega=0$, from any branch of solutions originating from the linear eigenvalues, a secondary bifurcation branch arises, with non trivial component on the tails (see Fig.\ref{bif}).
This phenomenon, in the simpler example of a tadpole graph (a circle with a single half-line attached), was noticed and studied in \cite{CFN15,[NPS15]}, where several bifurcations and in particular birth of edge solitons and their stability is studied. 
\\ 
Again, such a mechanism of attaching two half-solitons to a shifted cnoidal solution works for every $\omega<0$ if the ratio $L_1/L$ is rational, making the problem nontrivial for irrational ratios.\\
In our main results, we look for solutions of system \eqref{nleigen-pb} and show that they actually exist for every real value of the ratio $L_1/L$. As a matter of fact, a rather complex classification of the general solutions to system \eqref{nleigen-pb} arises for $\omega<0$, requiring in general cnoidal solutions with different parameters $k_1$ and $k_2$ on the two different pieces of the ring, but, precisely in view of this complexity, the study of the complete geography of standing waves branches for $\omega<0$ is postponed to a different paper.\\ The more restricted subject of this paper is the complete description of standing waves  of NLS on  the double bridge graph which exhibit the
following special features:
\begin{itemize}
	\item[(P$_1$)] $u_3$, $u_4$ are nontrivial,
	\item[(P$_2$)] $u_1$, $u_2$ are the restriction to $I_1$, $I_2$ of some $u\in H_{\mathrm{per}}^{2}([0,L])$
\end{itemize}
where 
\begin{equation}
H_{\mathrm{per}}^2([0,L])=\left\{ u\in H^2([0,L]):u(0)=u(L),\ u^{\prime}(0)=u^{\prime }(L)\right\}  
\label{Hper}
\end{equation}
is the second Sobolev space of periodic functions. 
As already remarked, condition (P$_1$) implies $\omega <0$ and 
\begin{equation}
u_j(x)=\pm \sqrt{2|\omega|} \sech\left(\hspace{-0.1cm}\sqrt{|\omega|}(x+a_j)\right),\ \ a_j\in \R,\ \ j=3,4.
\label{sech}
\end{equation}
Condition (P$_2$) implies $u_1^{\prime}(0)-u_2^{\prime}(L)=u_1^{\prime}(L_1)-u_2^{\prime}(L_1)=0$ 
and thus yields $a_j=0$ in (\ref{sech}), by the Kirchhoff conditions.  
Hence we are led to study the solutions $(\omega ,u)$ of the following problem:
\begin{equation}\label{eigenparameter}
\left\{ 
\begin{array}{ll}
-u^{\prime \prime }-u^{3}=\omega u, & u\in H_{per}^{2}([0,L]),\ \ \omega<0 \medskip  \\ 
u(0)=\pm u(L_{1})=\sqrt{2|\omega|} & 
\end{array}
\right.   \tag*{$\left( P_{\pm }\right) $}
\end{equation}
where the sign $\pm $ distinguishes the cases of $u_3$ and $u_4$ with
the same sign (which we may assume positive, thanks to the odd parity of the
equation) or with different signs.
We remark that $(P_\pm)$ is a nonlinear boundary value problem in which the spectral parameter $\omega$ appears explicitly in the boundary conditions. This makes the problem interesting in itself, as spectral parameter dependent (also said "energy dependent") boundary value problems occur frequently in applications. Indeed, they often arise in the passage from a complete system to a reduced system in which the remaining part is eliminated and its effect embodied in a nonstandard boundary condition (see for example \cite{BHLN01,MakThomp12} and reference therein). This is also our case, where the soliton-like nature of the solution on the half-line forces an $\omega$-dependent value of the solution at vertices. 
%
%

Properties of standing waves of the focusing NLS on the double bridge graph satisfying  both (P$_1$) and (P$_2$), or equivalently the solutions to problem $(P_{\pm})$, are described in the following four main theorems. We anticipate that (see proof of Lemma \ref{LEM:pb1}) they cannot be of dnoidal type.
To state the main results, we preliminarily define a function $S\colon (1/\sqrt{2},1)
\rightarrow \R$ by setting 
\begin{equation}
S(k)=4\sqrt{2k^2-1}K(k),  
\label{S:=}
\end{equation}
Notice that $S$ is strictly increasing, continuous and such
that $S\left((1/\sqrt{2},1)\right) =\left(0,+\infty \right)$.
Moreover from now on we denote by $[\ \cdot\ ]$ the floor function, or integer part ($[x]$ is the greatest integer smaller than or equal to the argument $x$). 


The first two results give the classification of standing waves. 
Surprisingly enough, they constitute a countable set if $L_1/L\not\in\Q$ (Theorem \ref{thm1}). If $L_1/L\in\Q$, this set of solutions essentially persists, besides the aforementioned solutions made up of two half-solitons attached to a shifted cnoidal solution. 

\begin{theorem} \label{thm1} 
	Suppose that $L_1/L \in \mathbb{R} \setminus \mathbb{Q}$. The solutions $(\omega ,u)$ of problem 	$\left(P_{\pm}\right)$ 
	are a countable family.
	More precisely, there exist two sequences $\{\omega_{n}^{+}\}_{n\geq 1}$ and $\{\omega_{n}^{-}\}_{n\geq 1}$
	such that the solutions of $\left(P_{\pm}\right)$ are 
	$\left\{ (\omega_{n}^{\pm},u_{n}^{\pm}): n\in\N\right\}$ with 
	\begin{eqnarray}
		u_{n}^{\pm }(x) &:=& \sqrt{\frac{2|\omega _{n}^{\pm }|k_{n}^{2}}{2k_{n}^{2}-1}}\ 
		\cn \left( \sqrt{\frac{|\omega_{n}^{\pm}|}{2k_{n}^{2}-1}}\left(x-s_{n}^{\pm }\right) ; k_{n}\right) ,\quad 
		k_{n}:=S^{-1}\left( L\frac{\sqrt{|\omega _{n}^{\pm }|}}{n}\right) ,  \label{irrational} \\
		s_{n}^{+} &:=&\frac{L}{2n}r_{n},\quad 
		s_{n}^{-}:=\frac{L}{2n}\left( \left|r_{n}\right| -\frac{1}{2}\right) \sgn\left( r_{n}\right) ,\quad 
		r_{n}:=\frac{L_{1}}{L}n-\left[ \frac{L_{1}}{L}n+\frac{1}{2}\right] .  
		\label{kn-sn}
	\end{eqnarray}
\end{theorem}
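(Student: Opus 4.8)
The plan is to reduce $(P_\pm)$ to a single scalar equation for the elliptic modulus $k$, and then to solve that equation for each period count $n$. By the discussion preceding \eqref{cnoidalomeganeg}, together with the fact (as in Lemma~\ref{LEM:pb1}) that neither a constant nor a dnoidal profile can attain the boundary value $u(0)=\sqrt{2|\omega|}$ — a constant solution has modulus $\sqrt{|\omega|}$, while the dnoidal amplitude $\sqrt{2|\omega|/(2-k^2)}$ stays strictly below $\sqrt{2|\omega|}$ for $k\in(0,1)$ — every admissible $u$ must be a translated cnoidal as in \eqref{cnoidalomeganeg}, with $k\in(1/\sqrt2,1)$ and some translation $s\in\R$. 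Membership in $H^2_{\mathrm{per}}([0,L])$ (recall \eqref{Hper}) forces the minimal period $T_\omega(k)=S(k)/\sqrt{|\omega|}$ (cf. \eqref{period}, \eqref{S:=}) to satisfy $L=nT_\omega(k)$ for some $n\in\N$, i.e. $S(k)=L\sqrt{|\omega|}/n$, equivalently $k=k_n=S^{-1}(L\sqrt{|\omega|}/n)$ as in \eqref{irrational}. Here $n$ is exactly the number of periods of $u$ on $[0,L]$, hence an intrinsic label making $n\mapsto(\omega,u)$ injective.

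Writing $\beta:=\sqrt{|\omega|/(2k^2-1)}$ and $\phi(x):=\beta(x-s)$ for the phase, I would first record $\beta L=4nK(k)$, so that $\phi(L_1)-\phi(0)=\beta L_1=4nK(L_1/L)\equiv 4K\,r_n\pmod{4K}$, with $r_n$ as in \eqref{kn-sn}. The boundary value at $0$ reads $\cn(\phi(0);k)=\sqrt{(2k^2-1)/k^2}=:c_k\in(0,1)$; since $\cn(\,\cdot\,;k)$ is even, $4K$-periodic, and positive precisely on $(-K,K)$ modulo $4K$, this is equivalent to $\phi(0)\equiv\pm\phi^*\pmod{4K}$, where $\phi^*:=\arccn(c_k;k)$ is the unique value in $(0,K)$ with $\cn(\phi^*;k)=c_k$. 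I would then impose the second boundary value, namely $\cn(\phi(L_1);k)=c_k$ in case $(P_+)$ and $\cn(\phi(L_1);k)=-c_k$ in case $(P_-)$. Feeding in $\phi(L_1)\equiv\phi(0)+4K r_n$ and reducing modulo $4K$, a short case analysis on the signs — using $|r_n|<1/2$, which holds precisely because $L_1/L\notin\Q$ — shows that the two boundary conditions are compatible if and only if
\[
\phi^*=2K|r_n|\ \ (\text{case }+),\qquad \phi^*=K(1-2|r_n|)\ \ (\text{case }-),
\]
and that the translation is then uniquely pinned down, modulo $T_\omega(k)$, to the value $s_n^\pm$ displayed in \eqref{kn-sn}.

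At this stage both boundary conditions have been absorbed into the single compatibility relation above, in which $k$ is the only remaining unknown for fixed $n$ and sign. Using the identity $\dn^2=(1-k^2)+k^2\cn^2$ together with $\cn^2(\phi^*;k)=(2k^2-1)/k^2$ one gets the clean characterization $\dn(\phi^*;k)=k$, which I would exploit to study the normalized phase
\[
g(k):=\frac{\phi^*(k)}{K(k)}=\frac{1}{K(k)}\arccn\!\left(\sqrt{\tfrac{2k^2-1}{k^2}};k\right),\qquad k\in(1/\sqrt2,1),
\]
so that the compatibility relation becomes $g(k)=2|r_n|$ in case $+$ and $g(k)=1-2|r_n|$ in case $-$. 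The endpoints are transparent from $\dn(\phi^*;k)=k$: as $k\to(1/\sqrt2)^+$ one has $\dn(\phi^*)\to 1/\sqrt2=\sqrt{1-k^2}$, the minimum of $\dn$, so $\phi^*\to K$ and $g\to 1$; as $k\to 1^-$ one has $\dn(\phi^*)\to 1=\dn(0)$, so $\phi^*\to 0$ and $g\to 0$. Since $2|r_n|$ and $1-2|r_n|$ both lie in $(0,1)$, the intermediate value theorem yields at least one root $k_n^\pm$, and setting $\omega_n^\pm:=-(nS(k_n^\pm)/L)^2$ produces the solution \eqref{irrational}.

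The remaining, and I expect hardest, point is uniqueness: I would prove that $g$ is a strictly decreasing homeomorphism of $(1/\sqrt2,1)$ onto $(0,1)$, whence each equation $g(k)=\text{const}$ has exactly one root and there is precisely one solution of $(P_\pm)$ for every $n\in\N$. Establishing the strict monotonicity of $g$ is the genuine analytic obstacle: it mixes differentiation of $\arccn$ and of $K$ with respect to the modulus $k$ and does not follow from elementary monotonicity of $\cn$ or $\dn$ alone. The route I would take is to differentiate the defining relation $\dn(\phi^*(k);k)=k$ to obtain $\phi^{*\prime}(k)$ and then control the sign of $g'=(\phi^{*\prime}K-\phi^*K')/K^2$ through standard but delicate elliptic-integral estimates. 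Granting this, the injectivity of $n\mapsto(\omega_n^\pm,u_n^\pm)$ (the period count) together with the reversibility of the construction shows that $\{(\omega_n^\pm,u_n^\pm):n\in\N\}$ is exactly the solution set of $(P_\pm)$, which is therefore a countable family, as claimed.
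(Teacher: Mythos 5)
Your reduction follows, step for step, the route the paper itself takes: the quantization $k=k_{n}=S^{-1}\bigl(L\sqrt{|\omega|}/n\bigr)$ is Lemma \ref{LEM:pb1}, your case analysis pinning the translation to $s_n^{\pm}$ and converting the two boundary values into $\phi^{*}=2K|r_n|$ (case $+$), resp.\ $\phi^{*}=K(1-2|r_n|)$ (case $-$), reproduces Lemmas \ref{LEM:approx} and \ref{LEM:approx.2}, and your normalized phase $g(k)=\phi^{*}(k)/K(k)$ is exactly the function $\varphi$ of \eqref{phi}, since $\phi^{*}=\arccn\bigl(\sqrt{(2k^{2}-1)/k^{2}}\,;k\bigr)$ is precisely the numerator integral there (your $\beta L=4nK$ identity is the paper's relation $\gamma_{n,\omega}=\frac{L}{4n}\varphi(k_{n,\omega})$). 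Your endpoint limits $g\to 1$ at $(1/\sqrt{2})^{+}$ and $g\to 0$ at $1^{-}$, obtained via the neat identity $\dn(\phi^{*};k)=k$, agree with the paper's direct computation of the limits of $\varphi$; note only that for the case $+$ equation to be solvable you need $2|r_n|\in(0,1)$, i.e.\ both $r_n\neq 0$ and $r_n\neq -1/2$, and both exclusions (not just $|r_n|<1/2$) come from irrationality of $L_1/L$.

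The genuine gap is the one you flag yourself: uniqueness. Everything hinges on $g$ being a strictly decreasing bijection of $(1/\sqrt{2},1)$ onto $(0,1)$, and you only sketch a plan (``differentiate $\dn(\phi^{*}(k);k)=k$ \ldots{} control the sign of $g'$ through standard but delicate elliptic-integral estimates'') without carrying it out. This is not a routine verification that can be waved through: $g'=(\phi^{*\prime}K-\phi^{*}K')/K^{2}$ is a difference of competing terms, and its sign is the sole nontrivial analytic content of the claim that the equations $g(k)=2|r_n|$ and $g(k)=1-2|r_n|$ each have \emph{exactly} one root; without it your intermediate-value argument yields at least one solution per $n$, but not that $\{(\omega_n^{\pm},u_n^{\pm}):n\in\N\}$ exhausts the solution set, which is what the theorem asserts. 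The paper closes this gap by an elementary but explicit estimate (Lemmas \ref{LEM:equation} and \ref{LEM:equation.2}, relying on the discussion after \eqref{phi}): writing $\varphi=H/K$ with $H$ the $\arccn$ integral, it computes
\[
H'(k)=\int_{\sqrt{\frac{2k^{2}-1}{k^{2}}}}^{1}\frac{k\sqrt{1-t^{2}}}{\bigl(1-k^{2}(1-t^{2})\bigr)^{3/2}}\,dt-\frac{1}{k^{2}\sqrt{(2k^{2}-1)(1-k^{2})}}\,,
\]
bounds the integral term by $2$ as in \eqref{decr}, and controls the negative boundary term by maximizing $k^{2}\sqrt{2k^{2}-1}\sqrt{1-k^{2}}$ at $k^{2}=(9+\sqrt{17})/16$, obtaining $H'<0$ uniformly; since $K'>0$ and $H,K>0$, the monotonicity of $\varphi$ follows, and no implicit differentiation of the $\dn$ relation is needed. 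Supplying this estimate (or an equivalent one) is exactly what your proposal still owes.
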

Note that the last equality of (\ref{irrational}) means that $u_{n}^{\pm}$ has period $L/n$.
The explicit construction of the sequences $\{\omega_{n}^{+}\}$ and $\{\omega_{n}^{-}\}$ is the subject of Lemmas \ref{LEM:equation} and \ref{LEM:equation.2} in Section \ref{SEC:pf1}. They are described as the solution of the equations
\begin{equation*}
\sqrt{| \omega^+_n | }=\frac{n}{L} G\left(\xi_n\right) 
\quad \text{and}\quad 
\sqrt{| \omega^-_n | }=\frac{n}{L} G\left(1-\xi_n\right),\quad n\in\mathbb{N}
\end{equation*}
where $\xi_n:=2|r_n|$ 
and $G$ is a certain monotone function (namely $G=S\circ\varphi^{-1}$, see \eqref{phi} for definition of $\varphi$).

\begin{theorem} \label{thmQ1} 
	Assume that $L_1/L = p/q\in \mathbb{Q}$ with $p,q$ coprime. The set of the solutions to $\left(P_{+}\right)$ is 
	$\{ (\omega,\tilde{u}_{n,\omega}^\pm):\omega<0,\,n\in\N q\} 
	\cup \{ (\omega_{n}^{+},u_{n}^{+}):n\in\N,\,n\notin\N q,\,np/q+1/2\notin\N\}$, 
	where $\omega_{n}^{+},u_{n}^{+}$ are the same of Theorem \ref{thm1}
	and 
	\begin{equation*}
		\tilde{u}_{n,\omega}^\pm(x) :=\sqrt{\frac{2|\omega|k_{n,\omega}^{2}}{2k_{n,\omega}^{2}-1}}\ 
		\cn \left( \sqrt{\frac{|\omega|}{2k_{n,\omega}^{2}-1}}\left(x \pm \gamma_{n,\omega}\right) ; k_{n,\omega}\right) ,
		\quad k_{n,\omega}:=S^{-1}\left( L\frac{\sqrt{|\omega|}}{n}\right) ,  
		\end{equation*}
		\begin{equation}
		\gamma_{n,\omega} = \sqrt{\frac{2k_{n,\omega }^2-1}{\left| \omega \right| }}
		\int_{\sqrt{\frac{2k_{n,\omega}^2-1}{k_{n,\omega}^2}}}^1
	\frac{dt}{\sqrt{\left(1-t^2\right) (1-k_{n,\omega }^2(1-t^2)) }}\,.
	\label{gamma0}
	\end{equation}
	
	The set of the solutions to $\left(P_{-}\right)$ is 
	$\{ (\omega_{n}^{-},u_{n}^{-}):n\in\N,\,n\notin\N q\}$
	if $q$ is odd, 
	and
	$\{ (\omega,\tilde{u}_{n,\omega}^\pm):\omega<0,\,n\in(2\N-1) q/2 \}
	\cup \{ (\omega_{n}^{-},u_{n}^{-}):n\in\N,\,n\notin\N q/2\}$
	if $q$ is even,
	where $\omega_{n}^{-},u_{n}^{-}$ are the same of Theorem \ref{thm1}.
\end{theorem}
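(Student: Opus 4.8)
The plan is to re-run the reduction already used to prove Theorem~\ref{thm1}, while keeping track of the arithmetic degeneracies that appear when $L_1/L=p/q$ is rational. As established in the analysis preceding Theorem~\ref{thm1} (Lemmas~\ref{LEM:equation} and~\ref{LEM:equation.2}), every solution $(\omega,u)$ of $(P_\pm)$ is a cnoidal profile of minimal period $L/n$ for some $n\in\N$, with amplitude $A=\sqrt{2|\omega|k^2/(2k^2-1)}$ and $k=S^{-1}(L\sqrt{|\omega|}/n)$, and the two boundary conditions $u(0)=\sqrt{2|\omega|}$ and $u(L_1)=\pm\sqrt{2|\omega|}$ become conditions on the phase (equivalently on the shift). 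The first condition fixes $\cn(\beta\gamma;k)=\sqrt{(2k^2-1)/k^2}$, with $\beta=\sqrt{|\omega|/(2k^2-1)}$, determining the shift up to sign and period; the second condition is what, generically, selects a discrete set of frequencies.

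First I would isolate the degenerate indices. Using the periodicity $u(x+L/n)=u(x)$ and the half-period antisymmetry $u(x+L/(2n))=-u(x)$ of the profile (i.e.\ $\cn(y+2K;k)=-\cn(y;k)$), the value $u(L_1)$ is tied to $u(0)$ whenever $L_1$ is a multiple of $L/(2n)$. Since $L_1/(L/(2n))=2np/q$, one has $u(L_1)=u(0)$ automatically precisely when $2np/q$ is an even integer, i.e.\ when $q\mid n$ (equivalently $r_n=0$), and $u(L_1)=-u(0)$ automatically precisely when $2np/q$ is an odd integer, which (as $\gcd(p,q)=1$ forces $p$ odd when $q$ is even) happens exactly when $q$ is even and $n\in(2\N-1)q/2$ (equivalently $r_n=-1/2$). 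In either degenerate case the condition at $L_1$ is vacuous, only $u(0)=\sqrt{2|\omega|}$ survives, and it is solvable for every $\omega<0$ since the remaining freedom is the shift. Solving $\cn(\beta\gamma_{n,\omega};k_{n,\omega})=\sqrt{(2k_{n,\omega}^2-1)/k_{n,\omega}^2}$ through the integral representation $\arccn(c;k)=\int_c^1 dt/\sqrt{(1-t^2)(1-k^2(1-t^2))}$, and recalling $\beta^{-1}=\sqrt{(2k^2-1)/|\omega|}$, yields exactly formula~\eqref{gamma0}; the two signs of the shift $\pm\gamma_{n,\omega}$ give the two families $\tilde u_{n,\omega}^{\pm}$. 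This accounts for $\{(\omega,\tilde u_{n,\omega}^\pm):\omega<0,\,n\in\N q\}$ in $(P_+)$, and for $\{(\omega,\tilde u_{n,\omega}^\pm):\omega<0,\,n\in(2\N-1)q/2\}$ in $(P_-)$ when $q$ is even.

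For all remaining indices $n$ the condition at $L_1$ is a genuine constraint and the situation is identical to the irrational case: the pair of boundary conditions pins down $\omega$ discretely, giving precisely the solutions $(\omega_n^\pm,u_n^\pm)$ of Theorem~\ref{thm1} (whose construction never used irrationality of $L_1/L$ beyond excluding the degenerate values $r_n\in\{0,-1/2\}$). It then remains to match the excluded indices with the stated index sets. For $(P_+)$ one discards $q\mid n$ (there $r_n=0$ and the problem degenerates) and also $np/q+1/2\in\N$, i.e.\ $r_n=-1/2$: at such $n$ the identity $u(L_1)=-u(0)$ holds, so the positive condition $u(L_1)=\sqrt{2|\omega|}$ is impossible and no $(P_+)$ solution exists, recovering $\{(\omega_n^+,u_n^+):n\notin\N q,\ np/q+1/2\notin\N\}$. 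For $(P_-)$ with $q$ odd, $np/q$ is never a half-integer, so the only obstruction is $q\mid n$ (where $u(L_1)=u(0)>0\neq -u(0)$), giving $\{(\omega_n^-,u_n^-):n\notin\N q\}$; for $q$ even the discrete solutions survive exactly off the union of the two degenerate sets, namely off $\N q/2=(2\N-1)q/2\cup \N q$, giving $\{(\omega_n^-,u_n^-):n\notin\N q/2\}$.

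The main obstacle is completeness and bookkeeping rather than any single computation: one must verify that the continuous degenerate families and the discrete families together exhaust all solutions and overlap nowhere, and that the trichotomy $r_n\in(-1/2,0)\cup(0,1/2)$ versus $r_n=0$ versus $r_n=-1/2$ translates correctly, through $\gcd(p,q)=1$ and the parity of $q$, into the index sets $\N q$, $(2\N-1)q/2$ and their complements. A secondary point requiring care is to confirm that for each admissible $n$ the target value $\sqrt{2|\omega|}$ is actually attained by the profile (which follows from $\sqrt{2|\omega|}\le A$, equivalent to $k\le 1$), and that the two signs of the shift yield distinct profiles, as is the case since $\beta\gamma_{n,\omega}=\arccn(\sqrt{(2k^2-1)/k^2};k)$ lies strictly between $0$ and the quarter-period $K(k)$.
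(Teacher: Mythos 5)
Your proposal is correct and follows essentially the same route as the paper: the lemmas shared with Theorem \ref{thm1} (classification of the periodic solutions attaining $\sqrt{2|\omega|}$ as shifted cnoidals of period $L/n$ with shift $\pm\gamma_{n,\omega}$, and the matching equations of Lemmas \ref{LEM:equation} and \ref{LEM:equation.2}) are valid for arbitrary $L_1/L$, and the rational case reduces, exactly as you argue, to translating the degeneracies $nL_1/L\in\N$ (continuous branches $\tilde{u}_{n,\omega}^\pm$ for $(P_+)$) and $nL_1/L+1/2\in\N$ (continuous branches for $(P_-)$, possible only for $q$ even with $n\in(2\N-1)q/2$) into the stated index sets, with the remaining indices handled verbatim as in the irrational case. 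Your only departure is presentational: you detect the degenerate indices via the half-period antisymmetry $\cn(y+2K;k)=-\cn(y;k)$, whereas the paper enumerates the preimages $x_j^{(n,\omega)}$, $y_j^{(n,\omega)}$ in Lemmas \ref{LEM:approx} and \ref{LEM:approx.2}; the disjointness and exhaustiveness bookkeeping you flag is precisely what the paper settles in Remarks \ref{exclude} and \ref{exclude.2}.
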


Some comments about Theorem \ref{thmQ1} are in order.
First, the functions $\tilde{u}_{n,\omega}^\pm$ have period $L/n$ and constitute the already mentioned secondary bifurcation branches with non trivial components on the tails, arising at $\omega=0$ from any branch of solutions originating from the linear eigenvalues (see Fig.\ref{bif}). 
Note that $\gamma_{n,\omega}$ tends to a quarter of the period, i.e. $L/(4n)$, as $\omega \rightarrow 0$ 
(cf. definition (\ref{gamma}) of $\gamma_{n,\omega}$). 

Second, the solutions $(\omega_{n}^{+},u_{n}^{+})$ in Theorem \ref{thmQ1} are a countable family, since $L_1/L = p/q\in \mathbb{Q}$ implies $\xi_{n+q}=\xi_{n}$ for all $n\in\N$ and therefore $\{|\omega_{n}^{+}|^{1/2}\}$ is a diverging sequence made up of $q$ subsequences 
$\{(mq+1) G(\xi_1)/L\}_{m\geq 0},\ldots,\{(mq+q) G(\xi_q)/L\}_{m\geq 0}$.
We also point out that the solutions $(\omega_{n}^{+},u_{n}^{+})$ 
do not belong to any branch $\{ (\omega,\tilde{u}_{n,\omega}^\pm)\}$, 
since $(\omega_{n}^{+},u_{n}^{+})$ satisfies \eqref{condition1} and Remark \ref{exclude} holds.
Similarly for $(\omega_{n}^{-},u_{n}^{-})$. 

Finally, the content of Theorem \ref{thmQ1} can also, and maybe better, be 
explained in terms of bifurcation diagrams.  
As before, it is convenient to denote by $p_0, q_0$ the unique coprime naturals such that $L_1/L=p_0/(2q_0)$, in such a way that $q_0=q$ if $q$ is odd and $q_0=q/2$ if $q$ is even (cf. Remark \ref{q_0});
note also that $n\notin\N q,\,np/q+1/2\notin\N$ just means $n\notin\N q_0$.
From every bifurcation branch $\{(\omega,{u}_{hq_0,\omega}^\pm):\omega<0\}$ originating from the eigenvalues 
$\lambda_h$, $h\in\N$, a secondary bifurcation branch 
$\{(\omega,\tilde{u}_{hq_0,\omega}^\pm):\omega<0\}$ bifurcates at $\omega=0$ . 
Such a branch of solutions solves $\left(P_{+}\right)$ for all $h$ if $q$ is odd, and $\left(P_{+}\right)$ or $\left(P_{-}\right)$ according as $h$ is even or odd if $q$ is odd. 
Away from these secondary branches, we find isolated solutions to problems $\left(P_{\pm}\right)$ coming from the countable families $\{(\omega_{n}^{\pm},u_{n}^{\pm}):n\in\N\}$, 
namely the ones with $n\notin\N q_0$. 
Observe that we have solutions that oscillate any number of times on $[0,L]$.
This situation is portrayed in Fig.\ref{bif}. 
\begin{figure}
\centering
        \includegraphics [trim=70 130 130 100,scale=0.5]        
        {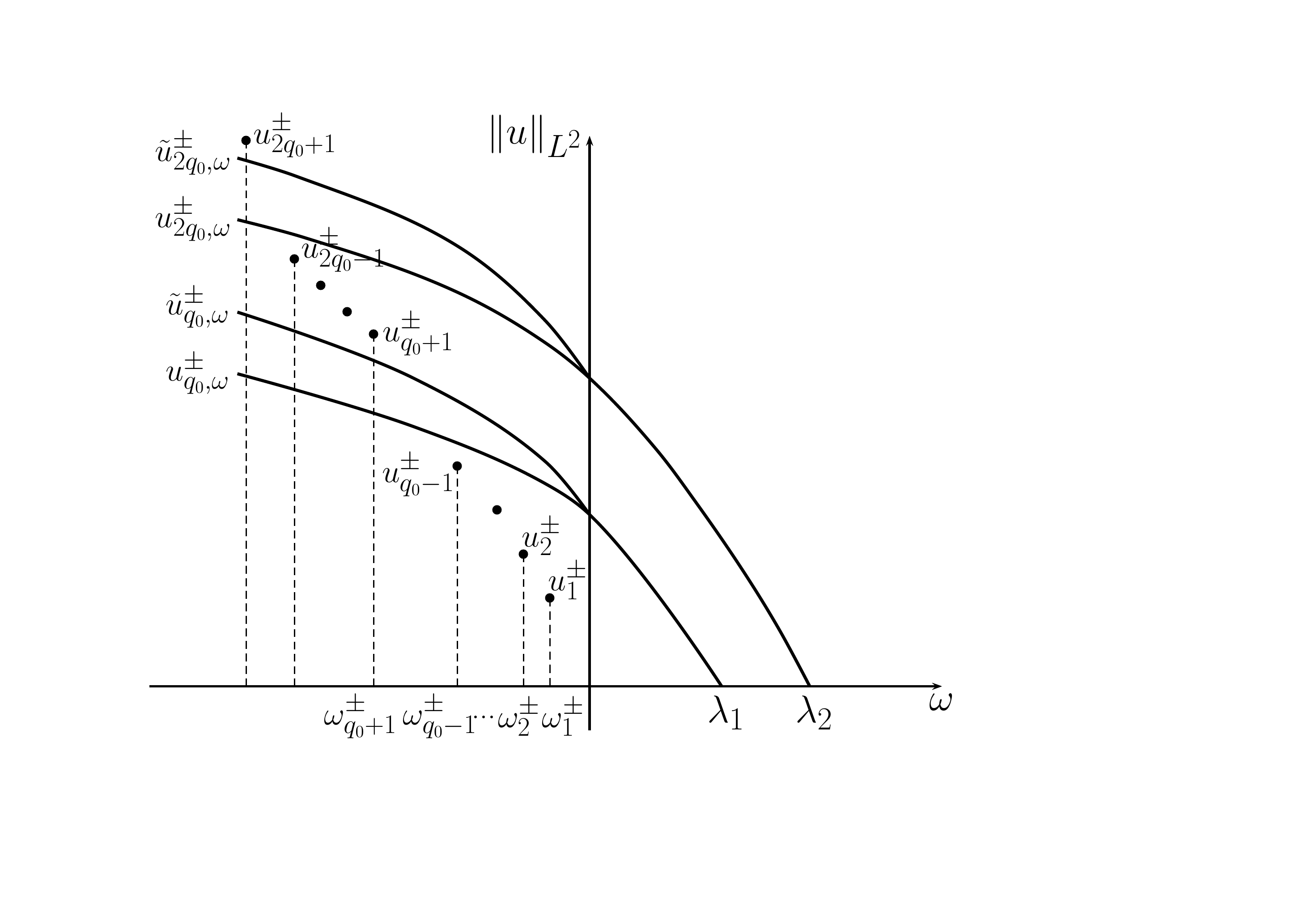}
				\medskip
        \caption{Bifurcation diagram for $L_1/L=p/q=p_0/(2q_0)$ with $p,q$ and $p_0,q_0$ coprime. 
				The functions $\tilde{u}_{hq_0,\omega}^\pm$ solve $\left(P_{+}\right)$ for all $h$ if $q$ is odd, and solve $\left(P_{+}\right)$ or $\left(P_{-}\right)$ according as $h$ is even or odd if $q$ is odd. 
				 All of them have period $L/(hq_o)$, the same of $u_{hq_0,\omega}^\pm$ and the eigenfunctions related to $\lambda_h$.
				The functions $u_n^\pm$, $n\notin\N q_0$, solve $(P_\pm)$ respectively, and have period $L/n$.
				The frequencies $\omega_1^\pm,...,\omega_q^\pm$ need not to be ordered as in the figure.}
        \label{bif}
   \end{figure}

For the sake of completeness, we also represent in Fig.\ref{bif_irr} the sets of the solutions to $(P_\pm)$ for $L_1/L\notin\Q$, as they appear according to Theorems \ref{thm1} and \ref{thm2}.


\begin{remark}
The translation parameters $\gamma_{n,\omega}$ and $s_n^\pm$ of Theorems \ref{thm1} and \ref{thmQ1} are essentially the same, and coincide with the ones needed to match the continuity condition with the half solitons at the vertices. 
More precisely, with the notations of Theorem \ref{thm1} one has $\gamma_{n,\omega_n^\pm}=|s_n^\pm|$ for all $n$ (see Lemmas \ref{LEM:approx} and \ref{LEM:approx.2}).
\end{remark}

The next theorem gives some information about the sequence of frequencies of standing waves pertaining to any irrational geometry: the set of frequencies is unbounded from below and moreover it has at least a finite limit point which, whatever the irrational $L_1/L$ is, is located in a precise interval (see also Remarks \ref{diofantina} and \ref{markov} below).

\begin{theorem}
	\label{thm2} Assume that $L_1/L \in \R \setminus \Q $.
	Then the sequences $\{\omega_{n}^{\pm}\}_{n\geq 1}$ of Theorem \ref{thm1}
	are unbounded from below and have at least a finite cluster point, respectively falling
	in the intervals $I^{\pm}$ defined by 
	\[
	I^{+}=\left[ -\frac{1}{L^{2}}\,K\left( \frac{1}{\sqrt{2}}\right)^{4},0\right] 
	\quad \text{and}\quad 
	I^{-}=\left[ -\frac{16}{5L^{2}}\,K\left(\frac{1}{\sqrt{2}}\right) ^{4},0\right] .
	\]
\end{theorem}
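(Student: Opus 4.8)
The plan is to feed the explicit description of $\{\omega_n^{\pm}\}$ from Theorem~\ref{thm1} and Lemmas~\ref{LEM:equation}--\ref{LEM:equation.2}, namely
\[
\sqrt{|\omega_n^+|}=\frac nL\,G(\xi_n),\qquad \sqrt{|\omega_n^-|}=\frac nL\,G(1-\xi_n),
\]
into purely number-theoretic statements about $\alpha:=L_1/L$. Writing $\|x\|:=\min_{m\in\mathbb{Z}}|x-m|$ for the distance to the nearest integer, the definition of $r_n$ gives $|r_n|=\|n\alpha\|$, whence $\xi_n=2\|n\alpha\|$ and, since $\alpha\notin\Q$ keeps $\xi_n<1$,
\[
1-\xi_n=1-2\|n\alpha\|=2\,\bigl\|\,n\alpha-\tfrac12\,\bigr\|.
\]
So the quantity fed to $G$ is (twice) the distance of $n\alpha$ to $\mathbb{Z}$ in the $(P_+)$ case and to $\mathbb{Z}+\tfrac12$ in the $(P_-)$ case. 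I would record at the outset that $G=S\circ\varphi^{-1}\colon(0,1)\to(0,+\infty)$ is a continuous, strictly decreasing bijection with $G(0^+)=+\infty$ and $G(1^-)=0$; these are the only structural facts about $G$ that enter.

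The single analytic ingredient is the sharp boundary behaviour $\lim_{\eta\to1^-}G(\eta)/(1-\eta)=2\,K(1/\sqrt2)^{2}$. Since $G(\eta)=S(k)=4\sqrt{2k^2-1}\,K(k)$ with $\eta=\varphi(k)$, and $\eta\to1$ forces $k\to1/\sqrt2$, I would expand $\varphi$ there: writing $\varphi(k)=y^{\ast}(k)/K(k)$ with $\cn(y^{\ast};k)=\sqrt{(2k^2-1)/k^2}$ and $y^{\ast}=K-\delta$, the local expansion $\cn(K-\delta;k)=\sqrt{1-k^2}\,\delta+O(\delta^2)$ gives $\delta=2\sqrt{2k^2-1}\,(1+o(1))$, hence $1-\eta=\delta/K(k)$ and the stated limit. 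Unboundedness from below is then immediate: by density of $\{n\alpha\}$ modulo $1$ there are, for any fixed $\eta_0\in(0,1)$, infinitely many $n$ for which the argument of $G$ stays $\le\eta_0$, so that $\sqrt{|\omega_n^{\pm}|}\ge\frac nL\,G(\eta_0)\to+\infty$ along such $n$ and $\inf_n\omega_n^{\pm}=-\infty$.

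The finite cluster point is where the two cases separate along the homogeneous/inhomogeneous divide. For $(P_-)$ a finite accumulation of $\sqrt{|\omega_n^-|}$ forces $\xi_n\to0$, i.e. $\|n\alpha\|\to0$: by \emph{Hurwitz's theorem} $\liminf_n n\|n\alpha\|\le 1/\sqrt5$, so along a suitable subsequence $n_j\|n_j\alpha\|\to\ell^-\le1/\sqrt5$ and the boundary asymptotics give
\[
\sqrt{|\omega_{n_j}^-|}=\frac{n_j}{L}\,G(1-\xi_{n_j})\longrightarrow\frac{2K(1/\sqrt2)^2}{L}\lim_j n_j\xi_{n_j}=\frac{4K(1/\sqrt2)^2}{L}\,\ell^-\le\frac{4}{\sqrt5\,L}\,K\!\left(\tfrac1{\sqrt2}\right)^{2},
\]
whence a finite cluster point of $\{\omega_n^-\}$ in $I^-$. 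For $(P_+)$ a finite limit forces $\xi_n\to1$, i.e. $n\alpha\to\mathbb{Z}+\tfrac12$; here \emph{Minkowski's inhomogeneous approximation theorem} applies with $\gamma=\tfrac12$ (legitimate since $\tfrac12\notin\alpha\mathbb{Z}+\mathbb{Z}$ when $\alpha\notin\Q$), giving $\liminf_n n\|n\alpha-\tfrac12\|\le1/4$, and the same computation yields $\sqrt{|\omega_{n_j}^+|}\to\frac{4K(1/\sqrt2)^2}{L}\,\ell^+$ with $\ell^+\le1/4$, a finite cluster point in $I^+$.

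The main obstacle is conceptual rather than computational: recognizing that problems $(P_+)$ and $(P_-)$ are governed, respectively, by the inhomogeneous (Minkowski, sharp constant $1/4$) and homogeneous (Hurwitz, sharp constant $1/\sqrt5$) one-dimensional approximation problems, and combining these constants with the elliptic asymptotic constant $2K(1/\sqrt2)^2$. Indeed the cluster value satisfies $\sqrt{|\omega^{\ast}|}=\frac{4K(1/\sqrt2)^2}{L}\,\ell$ with $\ell\le1/4$ for $(P_+)$ and $\ell\le1/\sqrt5$ for $(P_-)$, and squaring reproduces exactly the endpoints $-\frac1{L^2}K(1/\sqrt2)^4$ and $-\frac{16}{5L^2}K(1/\sqrt2)^4$ of $I^{\pm}$. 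Everything else is bookkeeping with the identities for $\xi_n$ and the monotonicity of $G$.
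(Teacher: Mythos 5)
Your proposal is correct and follows essentially the same route as the paper: the reduction $\sqrt{|\omega_n^+|}=\frac{n}{L}G(\xi_n)$, $\sqrt{|\omega_n^-|}=\frac{n}{L}G(1-\xi_n)$, the boundary asymptotics $G(\eta)\sim 2K(1/\sqrt{2})^2(1-\eta)$ as $\eta\to 1^-$ (the paper's Lemma \ref{LEM:asymptotics}), and the pairing of $(P_+)$ with the inhomogeneous constant $1/4$ (the paper invokes Niven's Corollary 2.4, which is your Minkowski-type bound $\liminf_n n\|n\alpha-\tfrac12\|\leq\tfrac14$) and of $(P_-)$ with Hurwitz's constant $1/\sqrt{5}$, yielding exactly the endpoints of $I^{\pm}$ after squaring. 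Your only departures are minor technical variants within the same strategy: you obtain unboundedness directly from density of $\left\{ n\alpha\right\}$ modulo $1$ rather than by the paper's contradiction argument via Dirichlet's theorem, and you derive the asymptotic constant from the local expansion $\cn(K-\delta;k)=\sqrt{1-k^2}\,\delta+O(\delta^2)$ of the $\arccn$ in $\varphi$ instead of the paper's differentiation of $\varphi=H/K$ combined with L'H\^opital, both computations giving the same constant $2K(1/\sqrt{2})^2$.
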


\begin{figure}
\centering
        \includegraphics [trim=70 130 130 100,scale=0.5]        
        {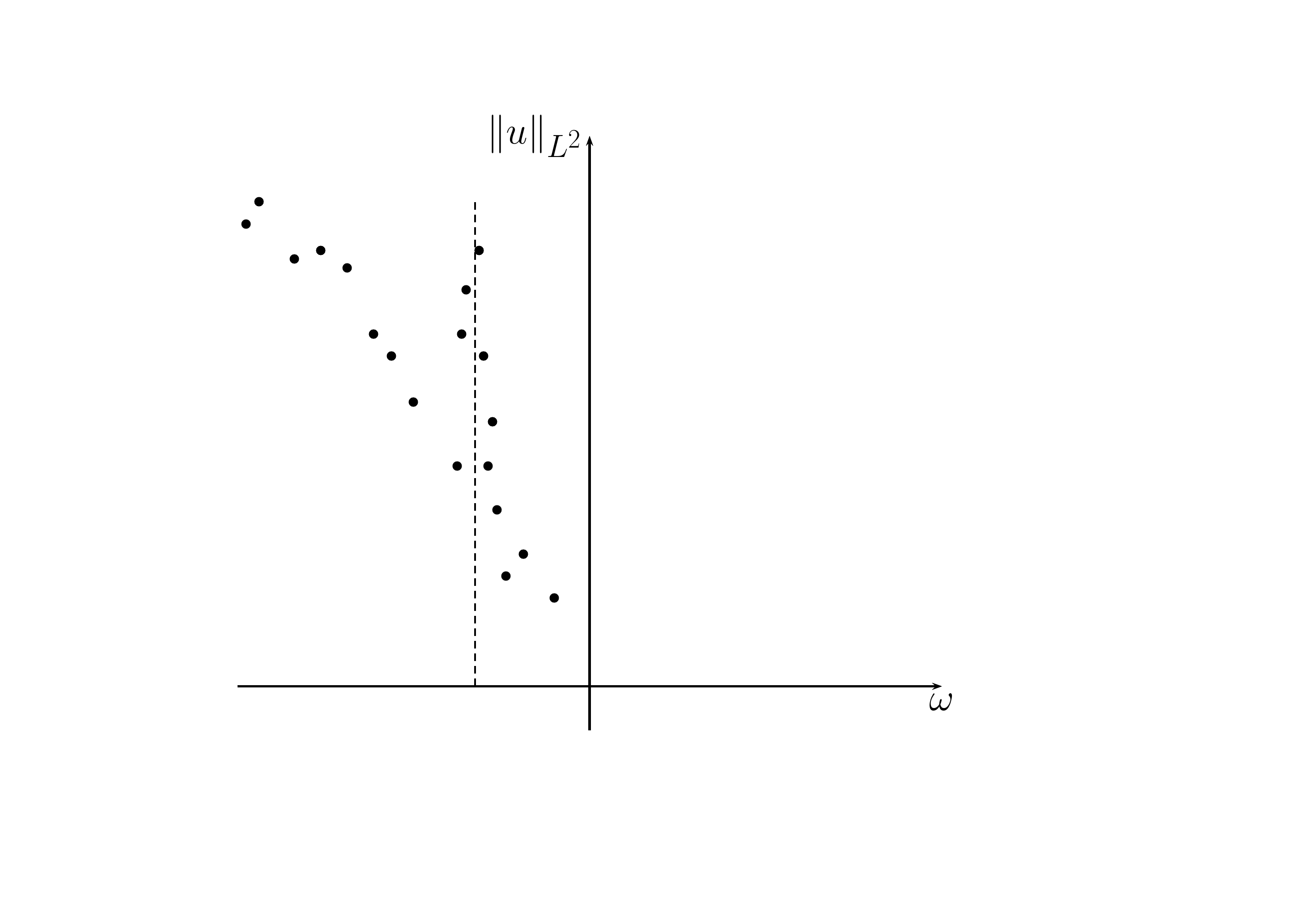}
				\smallskip
        \caption{The appearance of each of the sets of solutions 
				$\{(\omega_n^+,u_n^+)\}_{n\in\N}$ and $\{(\omega_n^-,u_n^-)\}_{n\in\N}$ for $L_1/L\in\R\setminus\Q$, 
				according to Theorems \ref{thm1} and \ref{thm2}.} 
        \label{bif_irr}
   \end{figure}

Finally, the last theorem answers in the affirmative the following inverse problem: can any fixed negative real number be a limit point of standing wave frequencies provided we can choose in a suitable way the ratio $L_1/L$? 

\begin{theorem}\label{thm3}
For every $\omega_0 \leq 0$ there exists a number $L_1/L:=\alpha \in	(0,1)$ such that the frequency sequence 
$\{\omega_{n}^{+}\}_{n\geq 1}$ of Theorem \ref{thm1} has a subsequence converging to $\omega_0$.
\end{theorem}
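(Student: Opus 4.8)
The plan is to run the inverse construction directly on the explicit formula for the admissible frequencies supplied by Lemma \ref{LEM:equation}, namely
\[
\sqrt{|\omega_n^+|}=\frac{n}{L}\,G(\xi_n),\qquad \xi_n=2\,\|\alpha n\|,\quad \alpha:=\frac{L_1}{L},
\]
where $\|\cdot\|$ denotes the distance to the nearest integer and (recall from \eqref{S:=} and the definition of $\varphi$) $G$ is the continuous, strictly decreasing bijection of $(0,1)$ onto $(0,+\infty)$ with $G(\xi)\to+\infty$ as $\xi\to0^+$ and $G(\xi)\to0$ as $\xi\to1^-$. Since $\alpha$ will be irrational, $\xi_n\in(0,1)$ for every $n$, so all $\omega_n^+$ are well defined and Theorem \ref{thm1} identifies $\{\omega_n^+\}_{n\ge1}$ with the above sequence. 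Writing $c:=\sqrt{|\omega_0|}\ge0$, the problem reduces to producing an irrational $\alpha\in(0,1)$ and an increasing sequence $n_k\to\infty$ with $\frac{n_k}{L}\,G\bigl(2\|\alpha n_k\|\bigr)\to c$. I would encode this as a prescription on the numbers $\|\alpha n_k\|$: setting the target $\tau_k:=G^{-1}(Lc/n_k)\in(0,1)$, the identity $\frac{n_k}{L}G(\tau_k)=c$ holds exactly, so it suffices to force $2\|\alpha n_k\|$ to approximate $\tau_k$ sharply enough to survive multiplication by $n_k$.

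The core of the argument is the Diophantine construction of $\alpha$ by nested intervals. I would build a decreasing chain of closed intervals $J_1\supset J_2\supset\cdots$ of positive lengths $\ell_k\to0$ together with integers $n_k\uparrow\infty$ and tolerances $\varepsilon_k>0$, maintaining the inductive hypothesis that $\bigl|2\|\alpha n_j\|-\tau_j\bigr|<\varepsilon_j$ for every $\alpha\in J_k$ and every $j\le k$. To pass from $J_k$ to $J_{k+1}$ I first choose $n_{k+1}$ so large that $n_{k+1}\ell_k\ge1$, whence, as $\alpha$ ranges over $J_k$, the quantity $\alpha n_{k+1}$ sweeps an interval of length $\ge1$ and $\|\alpha n_{k+1}\|$ attains every value in $[0,1/2]$, in particular the value $\tfrac12\tau_{k+1}$ on a subinterval. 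I then fix $\varepsilon_{k+1}$ and select, inside that subinterval, a closed $J_{k+1}\subset J_k$ of positive length with $\ell_{k+1}\le\ell_k/2$ on which $\bigl|2\|\alpha n_{k+1}\|-\tau_{k+1}\bigr|<\varepsilon_{k+1}$. Because $J_{k+1}\subset J_k$, the constraints for $j\le k$ persist automatically, and the single point $\alpha:=\bigcap_k J_k$ then satisfies $\bigl|2\|\alpha n_k\|-\tau_k\bigr|<\varepsilon_k$ for all $k$.

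To close the estimate it remains to fix the tolerances so that the error is negligible after dilation by $n_k$. Once $n_k$ (hence $\tau_k=G^{-1}(Lc/n_k)$) is chosen, continuity of $G$ at $\tau_k$ provides an $\varepsilon_k>0$ such that $|t-\tau_k|<\varepsilon_k$ implies $|G(t)-G(\tau_k)|<L/(k\,n_k)$; with this choice
\[
\left|\sqrt{|\omega_{n_k}^+|}-c\right|
=\frac{n_k}{L}\,\bigl|G\bigl(2\|\alpha n_k\|\bigr)-G(\tau_k)\bigr|<\frac1k\longrightarrow0 .
\]
Since $\varepsilon_k$ can be taken as small as we wish while keeping $J_{k+1}$ of positive length, this is always compatible with the construction. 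Irrationality of $\alpha$ comes for free: enumerating $\Q\cap(0,1)=\{\rho_m\}$, at the $k$-th step I additionally shrink $J_{k+1}$ to exclude $\rho_k$ (possible since $J_{k+1}$ has positive length), so the nested intersection is a single irrational. Hence $L_1/L=\alpha\notin\Q$, Theorem \ref{thm1} applies, and the subsequence $\{\omega_{n_k}^+\}$ converges to $\omega_0$.

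The step I expect to be most delicate is the bookkeeping, since the target $\tau_{k+1}$ itself depends on the freshly chosen $n_{k+1}$: one must order the choices carefully (first $n_{k+1}$ large enough for the sweep condition $n_{k+1}\ell_k\ge1$, then $\tau_{k+1}$, then $\varepsilon_{k+1}$ from continuity of $G$, then $J_{k+1}$) and check these requirements are mutually compatible at each stage while also avoiding a rational. The case $\omega_0=0$ needs only a cosmetic modification — replacing the constant $c$ by a sequence $\delta_k\downarrow0$ and using $\tau_k:=G^{-1}(\delta_k/n_k)$, so that $\frac{n_k}{L}G(\tau_k)=\delta_k/L\to0$ — and is in fact easier, as it requires only an upper bound tending to zero rather than a two-sided approximation of a positive target.
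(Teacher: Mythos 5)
Your proof is correct, but it takes a genuinely different route from the paper's. The paper first reduces the problem, via the asymptotic Lemma \ref{LEM:asymptotics} and the identity (\ref{1-an}), to showing that for every $\ell\geq 0$ some $\alpha$ makes $\{n(1-\xi_n)\}$ --- equivalently $\tilde{\xi}_n$ of (\ref{xi_bar:=}) --- admit a subsequence converging to $\ell$, and then builds $\alpha$ \emph{explicitly} through its binary expansion: along $n=2^{n_j}$, multiplication by $2^{n_j}$ shifts the binary point, so inserting digit blocks that encode $\ell$ (with a leading $1$ and padding zeros) forces $2^{n_j}\{2^{n_j}\alpha\}-2^{n_j-1}\to\ell$. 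You instead bypass the asymptotics of $G$ near $1$ altogether and work with the exact relation $\sqrt{|\omega_n^+|}=\frac{n}{L}\,G(\xi_n)$ from Lemma \ref{LEM:equation} (your identification $\xi_n=2\|\alpha n\|$ is consistent with (\ref{distance=}), since $[\alpha n+\frac{1}{2}]$ is the nearest integer to $\alpha n$ for irrational $\alpha$), producing $\alpha$ by nested closed intervals: the sweep condition $n_{k+1}\ell_k\geq 1$ guarantees that $\|\alpha n_{k+1}\|$ attains the exact target $\tau_{k+1}/2\in(0,\tfrac12)$ on a subinterval of $J_k$, and your tolerance $\varepsilon_k$, calibrated by continuity of $G$ at $\tau_k$, makes the error survive the dilation by $n_k/L$, yielding $\bigl|\sqrt{|\omega_{n_k}^+|}-c\bigr|<1/k$; your ordering of the choices ($n_{k+1}$ first, then $\tau_{k+1}$, $\varepsilon_{k+1}$, $J_{k+1}$) is indeed consistent. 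Each approach buys something: the paper's is constructive --- it exhibits the digits of $\alpha$ --- but requires the delicate expansion of $G$ (Lemma \ref{LEM:asymptotics}, with the constant $\frac{2}{L}K\left(\frac{1}{\sqrt{2}}\right)^2$) plus some digit bookkeeping, and it leaves the irrationality of the constructed $\alpha$ implicit in the freedom of the ``arbitrary'' blocks; your compactness argument is softer, works verbatim for any continuous strictly monotone bijection $G\colon(0,1)\to(0,+\infty)$, and secures irrationality explicitly by excluding an enumeration of $\Q\cap(0,1)$, at the price of obtaining $\alpha$ only as an intersection point rather than in closed form. Finally, your separate treatment of $\omega_0=0$ via targets $\delta_k\downarrow 0$ is necessary and correct ($G^{-1}(0)$ is undefined), whereas in the paper's scheme $\ell=0$ needs no special care since only $n_j(1-\xi_{n_j})\to 0$ is required.
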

\noindent
The detailed proofs of the previous theorems fill Sections \ref{SEC:pf1}, \ref{SEC:pf2} and \ref{SEC:pf3}. We only notice that a relevant part of the proofs is played by properties of diophantine approximation of real numbers. Some of them are elementary or well known (as in the case of Dirichlet theorem and Weyl equidistribution theorem) and some other are less. In particular a detailed analysis of the possible cluster points of $\omega_n^{\pm}$ would require information about the so called inhomogeneous diophantine approximation constants of real numbers (see Remark \ref{diofantina}), which are strictly related to the properties of sequence $\xi_n$ (see for example the classical treatise \cite{Cassels}, \cite{Niven} and the more recent \cite{Schmidt}), about which only few precise results are known. However we stress the fact that the analysis here presented is essentially elementary and self contained.\\ Once again, we stress that we are not classifying the totality of standing waves of the double bridge graph, but the subfamily with a periodical component on the ring, which in turn is in $1-1$ correspondence with solutions of boundary value problems $\left(P_{\pm}\right)$. From this point of view, the rather surprising structure of the obtained frequencies, constitutes the {\it nonlinear spectrum} of problems $\left(P_{\pm}\right)$. This seems a new result with an independent interest.\\
In the effort of giving further information on the frequency sequences of Theorem \ref{thm1} and to guess possible directions for rigorous analysis, in the last Section \ref{SEC:num}, some numerical results about the sequence $\{\omega_{n}^{+}\}$ 
are given, with the aid of a simple code run by \texttt{Wolfram MATHEMATICA 10.4.1}. 
The numerics highlights several phenomena. 
In the first place the appearance of  a single or also several cluster points for $\{\omega_{n}^{+}\}$ in the interval $I^{+}$, depending on the choice of different ratios $\alpha$. 
Secondarily, for several choices of algebraic ratios $\alpha$ 
the indices corresponding to the subsequences of $\{\omega_{n}^{+}\}$ converging in $I^{+}$ 
are recognized as distinguished and well known sequences, for example related to Fibonacci or Chebyshev sequences. We do not have at present any clue about this second seemingly curious behavior. We however notice that in principle this is a pure number theoretic property of diophantine approximation constants; it is perhaps noteworthy its appearance in the boundary value problem here studied.
\smallskip

For the convenience of the reader, we collect here some notation. 
\begin{itemize}
	\item $\mathbb{N}$ stands for the set of \emph{positive} integers ($0$ excluded).
	\item $[t]$  is the integer part of $t\in \R$, while $\left\{t\right\} =t-\left[ t\right]$ is the fractional part of $t\in \R$.
\end{itemize}
Finally we notice once and for all that that $L_1/L_2\in \mathbb{R}\setminus \mathbb{Q}$ is equivalent to 
$L_1/L\in \mathbb{R}\setminus \mathbb{Q}$, since 
$\frac{L_1}{L}=\frac{L_1/L_2}{1+L_1/L_2}$.

\section{Proof of Theorems \ref{thm1} and \ref{thmQ1} \label{SEC:pf1}}

With the aim of proving Theorems \ref{thm1} and \ref{thmQ1}, we first solve the auxiliary problem
\begin{equation} \label{pb1}
\left\{ 
\begin{array}{l}
-u''-u^3=\omega u\medskip  \\ 
u\in H_{\mathrm{per}}^2([0,L]),\ \ \omega <0.
\end{array}
\right.   
\end{equation}
Clearly, the solutions of problem $\left(P_{\pm}\right)$ are the solutions of (\ref{pb1}) 
satisfying the boundary condition $u(0) = \pm u(L_1) = \sqrt{2|\omega|}$.
\begin{lemma}\label{LEM:pb1}
	The set of the solutions $(\omega ,u)$ 
	to problem \eqref{pb1} assuming the value $\sqrt{2|\omega|}$
	is the family 
	$\left\{ (\omega ,c_{n,\omega }(\cdot \,;a)):
	n\in \N ,\,\omega <0,\,a\in \left[ -\frac{L}{4n},\frac{3L}{4n}\right) \right\}$,
	where 
	\begin{eqnarray}
	\displaystyle c_{n,\omega }(x;a) &=&
	\sqrt{\frac{2|\omega|k_{n,\omega }^2}{2k_{n,\omega}^2-1}}\cn\left(\sqrt{\frac{|\omega|}{2k_{n,\omega }^2-1}}(x+a);
	k_{n,\omega }\right) ,\medskip   
	\label{c} \\
	\displaystyle k_{n,\omega } &=&S^{-1}\left( L\frac{\sqrt{|\omega |}}{n}\right).
	\label{k}
	\end{eqnarray}
\end{lemma}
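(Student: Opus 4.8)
The plan is to read \eqref{pb1} as the autonomous ODE $u''=|\omega|u-u^{3}$ and to classify all of its bounded solutions by a phase-plane (energy) argument, then to impose the periodicity on $[0,L]$ and, finally, the requirement that $u$ attain the value $\sqrt{2|\omega|}$. Multiplying the equation by $u'$ and integrating yields the first integral $\tfrac12(u')^{2}+V(u)=E$ with the double-well potential $V(u)=\tfrac14 u^{4}-\tfrac{|\omega|}{2}u^{2}$, whose wells sit at the centres $u=\pm\sqrt{|\omega|}$ and whose barrier top is the saddle $u=0$, with $V(0)=0$. The bounded orbits are then the standard ones: the equilibria $u\equiv 0,\pm\sqrt{|\omega|}$; for $-\omega^{2}/4<E<0$ the orbits confined to a single well, i.e. the strictly positive or strictly negative $\dn$ functions; for $E=0$ the homoclinic orbit through $0$, i.e. the $\sech$ soliton; and for $E>0$ the large orbits encircling all three equilibria and crossing $u=0$, i.e. the $\cn$ functions oscillating around zero.

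First I would determine which orbit types can meet both requirements. A function lies in $H^{2}_{\mathrm{per}}([0,L])$ only if its trajectory is closed, which excludes the homoclinic $\sech$ orbit; and the equilibria take only the values $0,\pm\sqrt{|\omega|}$, none equal to $\sqrt{2|\omega|}$ since $\omega<0$. The conceptual crux, anticipated before the statement, is that $V(\sqrt{2|\omega|})=0$: the value $\sqrt{2|\omega|}$ is exactly the turning amplitude of the separatrix. Since a dnoidal orbit has $E<0$ and lives in the well $u>0$, where $V$ is increasing for $u>\sqrt{|\omega|}$, its maximum $u_{\max}$ solves $V(u_{\max})=E<0=V(\sqrt{2|\omega|})$ and hence $u_{\max}<\sqrt{2|\omega|}$; thus no dnoidal solution ever reaches $\sqrt{2|\omega|}$. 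Conversely a cnoidal orbit has $E>0$, so its amplitude $A$, fixed by $V(A)=E$, satisfies $\tfrac14 A^{2}-\tfrac{|\omega|}{2}>0$, i.e. $A>\sqrt{2|\omega|}$; as $\cn$ is continuous and takes all values in $[-A,A]$, the value $\sqrt{2|\omega|}\in(0,A)$ is automatically attained. Hence, among the periodic solutions of \eqref{pb1}, exactly the cnoidal ones satisfy the value condition, which imposes no further restriction beyond selecting this family.

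It remains to parametrize the cnoidal solutions. A nonconstant solution is determined by $(u(0),u'(0))$, and the flow on its orbit is periodic with the minimal period $T$ of that orbit, so $u\in H^{2}_{\mathrm{per}}([0,L])$ if and only if $L=nT$ for some $n\in\N$. By the scaling recalled in the Introduction, the cnoidal orbit with parameter $k\in(1/\sqrt2,1)$ has period $T=S(k)/\sqrt{|\omega|}$ with $S$ as in \eqref{S:=}; since $S\colon(1/\sqrt2,1)\to(0,+\infty)$ is a continuous strictly increasing bijection, the constraint $L=nT$ is equivalent to $k=S^{-1}(L\sqrt{|\omega|}/n)=k_{n,\omega}$, which is \eqref{k} and fixes the amplitude $\sqrt{2|\omega|k_{n,\omega}^{2}/(2k_{n,\omega}^{2}-1)}$ of \eqref{c}. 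Finally, for fixed $(n,\omega)$ every solution on this orbit is a time translate $c_{n,\omega}(\cdot\,;a)$, and since the half-period translate of $\cn$ is $-\cn$, the two signs are already related by translation, so distinct functions correspond exactly to $a$ ranging over one period $L/n$; choosing the half-open representative interval $[-L/4n,3L/4n)$ lists each solution once. This gives precisely the claimed family. I expect the only genuinely delicate points to be the dnoidal exclusion through the identity $V(\sqrt{2|\omega|})=0$ and the bookkeeping that counts the phase $a$ over exactly one period without duplication.
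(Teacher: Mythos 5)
Your proof is correct, and it follows the same overall skeleton as the paper's — classify the periodic solutions of the ODE into cnoidal and dnoidal families, exclude the dnoidal ones via the value condition, convert $H^2_{\mathrm{per}}$ membership into the period quantization $L=nT$ giving \eqref{k}, absorb the sign through a half-period translation, and list each solution once by restricting $a$ to the half-open interval $\left[-\frac{L}{4n},\frac{3L}{4n}\right)$ — but you justify the key inclusion/exclusion step by a genuinely different mechanism. The paper takes the explicit families \eqref{pb1_sols} and \eqref{pb1_sols_dn} as known and reads the bounds off the amplitudes: the positive dnoidal solutions oscillate up to $\sqrt{2|\omega|}/\sqrt{2-k^2}<\sqrt{2|\omega|}$, while the cnoidal amplitude $\sqrt{2|\omega|k^2/(2k^2-1)}$ exceeds $\sqrt{2|\omega|}$ for every $k\in(1/\sqrt{2},1)$. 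You instead work from the first integral $\frac{1}{2}(u')^2+V(u)=E$ with $V(u)=\frac{1}{4}u^4-\frac{|\omega|}{2}u^2$ and the identity $V\left(\sqrt{2|\omega|}\right)=0$: dnoidal orbits have $E<0$, so by monotonicity of $V$ on $\left(\sqrt{|\omega|},+\infty\right)$ their turning amplitude stays below $\sqrt{2|\omega|}$, while cnoidal orbits have $E>0$ and amplitude above it. Your route is self-contained (it does not presuppose the explicit amplitude of $\dn$) and conceptually sharper, since it explains the special role of the boundary value $\sqrt{2|\omega|}$: it is exactly the turning height of the $E=0$ separatrix, i.e.\ of the $\sech$ soliton whose tails force this value at the vertices in $\left(P_{\pm}\right)$; the cost is that you must also verify the standard phase-plane classification of orbits (equilibria, dnoidal, homoclinic, cnoidal), which the paper simply cites. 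Note also that all orbits of this system are bounded since $V(u)\to+\infty$ as $|u|\to\infty$, so your phase-plane inventory is exhaustive without further argument, and your final counting (the negative sign is a translation by $\frac{L}{2n}$, and distinct $a$ in one period interval give distinct functions because $L/n$ is the minimal period) coincides exactly with the paper's.
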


\begin{proof}
	The periodic solutions of the equation $-u^{\prime \prime }-u^3=\omega u$ 
	with $\omega \in \R $ are well known and can be expressed in terms of the Jacobian elliptic functions 
	(cf. the discussion in Section \ref{SEC:intro}, and see \cite{CFN15} and references therein). 
	In particular, for $\omega <0$, 
	such solutions are the functions 
	\begin{equation}
	c_{\omega}(x;k,a)=\pm \sqrt{\frac{2|\omega|k^2}{2k^2-1}}\ \cn\left(\sqrt{\frac{|\omega |}{2k^2-1}}(x+a);k\right) 
	\label{pb1_sols}
	\end{equation}
	with $k\in \left( 1/\sqrt{2},1\right) $ and $a\in \R $ free parameters, and
	\begin{equation}
	d_{\omega}(x;k,a)=\pm \sqrt{\frac{2|\omega|}{2-k^2}}\ \dn\left(\sqrt{\frac{|\omega |}{2-k^2}}(x+a);k\right) 
	\label{pb1_sols_dn}
	\end{equation}
	with $k\in [0,1) $ and $a\in \R $ free parameters. 
	The negative sign in (\ref{pb1_sols_dn}) is ruled out, because the corresponding maps only take negative values.
	Moreover, the dnoidal function $\dn$ oscillates between $\sqrt{1-k^2}$ and $1$ 
	and therefore the positive funtions $d_{\omega}$ oscillate between 
	$\sqrt{2|\omega|(1-k^2)}/\sqrt{2-k^2}$ 
	and $\sqrt{2|\omega|}/\sqrt{2-k^2}<\sqrt{2|\omega|}$, which implies that they cannot assume the value $\sqrt{2|\omega|}$.
	So the whole family (\ref{pb1_sols_dn}) is ruled out.
	On the other hand, the function $\cn$ oscillates between $-1$ and $1$ and thus the image of all the functions $c_{\omega}$ contains $\sqrt{2|\omega|}<\sqrt{2|\omega |k^2}/\sqrt{2k^2-1}$, $k\in (1/\sqrt{2},1)$ .
	The period $T$ of $c_{\omega }$ depends on $k$ (and $\omega $) and is given by 
	\[
	T=\frac{S(k)}{\sqrt{|\omega |}}.
	\]
	Hence $c_{\omega}$ belongs to $H_{per}^2([0,L])$ if and only if $L$ is an
	integer multiple of $T$, i.e., $k=S^{-1}(L\sqrt{|\omega |}/n)$ for some $n\in \N $. 
	Therefore the solutions to (\ref{pb1}) assuming the value $\sqrt{2|\omega|}$ are the functions 
	$c_{n,\omega }(x;a)=c_{\omega }(x;k_{n,\omega },a)$ with $n\in \N ,\,\omega <0$ and $a\in \R $.
	Since $c_{n,\omega }(\cdot \,;a)=-c_{n,\omega }(\cdot \,;a-\frac{L}{2n})$, 
	the negative sign in (\ref{pb1_sols}) can be removed in order to avoid duplicate solutions. 
	Finally, the parameter $a$ can be limited to the interval 
	$\left[ -\frac{T}{4},\frac{3T}{4}\right) =\left[ -\frac{L}{4n},\frac{3L}{4n}\right) $ by periodicity. 
\end{proof}
\noindent
Notice that, according to the proof, the function (\ref{c}) has period $L/n$ for every $n,\omega,a$. 

For $n\in \mathbb{N}$ and $\omega <0$, define the auxiliary function 
\[
c_{n,\omega }(x):=c_{n,\omega }\left( x\,;0\right) 
\]
(cf. Fig.\ref{bn}). Observe that 
\[
c_{n,\omega }(\mathbb{R})=
\left[ -\sqrt{\frac{2|\omega |k_{n,\omega }^{2}}{2k_{n,\omega }^{2}-1}},
\sqrt{\frac{2|\omega |k_{n,\omega }^{2}}{2k_{n,\omega}^{2}-1}}\right] 
\quad \textrm{and}\quad 
0<\sqrt{2|\omega|}<\sqrt{\frac{2|\omega |k_{n,\omega }^{2}}{2k_{n,\omega }^{2}-1}}\,, 
\]
since $k_{n,\omega }\in (1/\sqrt{2},1)$. 
Hence $\sqrt{2|\omega|}$ has $2n$ preimages in $\left[ -\frac{L}{4n},L-\frac{L}{4n}\right] $ 
under $c_{n,\omega }$, which we denote by 
\[
x_{1}^{(n,\omega )}<x_{2}^{(n,\omega )}<...<x_{2n}^{(n,\omega )}.
\]
Similarly, $-\sqrt{2|\omega|}$ has $2n$ preimages in $\left[ -\frac{L}{4n},L-\frac{L}{4n}\right] $ 
under $c_{n,\omega }$ as well, which we denote by 
\[
y_{1}^{(n,\omega )}<y_{2}^{(n,\omega )}<...<y_{2n}^{(n,\omega )}.
\]
For future reference, we also set 
\begin{equation}
\gamma _{n,\omega }:=-x_{1}^{(n,\omega )}\quad \left( =x_{2}^{(n,\omega)}>0\right) ,
	\label{gamma}
\end{equation}
in such a way that for $j=1,\ldots ,n$ one has
\begin{equation}
x_{2j-1}^{(n,\omega )}=\left( j-1\right) \frac{L}{n}-\gamma _{n,\omega},\qquad 
x_{2j}^{(n,\omega )}=\left( j-1\right) \frac{L}{n}+\gamma_{n,\omega }  \label{x_j}
\end{equation}
and
\begin{equation}
y_{2j-1}^{(n,\omega )}=\left( 2j-1\right) \frac{L}{2n}-\gamma _{n,\omega},\qquad 
y_{2j}^{(n,\omega )}=\left( 2j-1\right) \frac{L}{2n}+\gamma_{n,\omega }.  \label{y_j}
\end{equation}

Note that definition (\ref{gamma}) is equivalent to (\ref{gamma0}), as we will show at a later stage (see (\ref{d=})).

   

   \begin{figure}
        \includegraphics [scale=0.6]{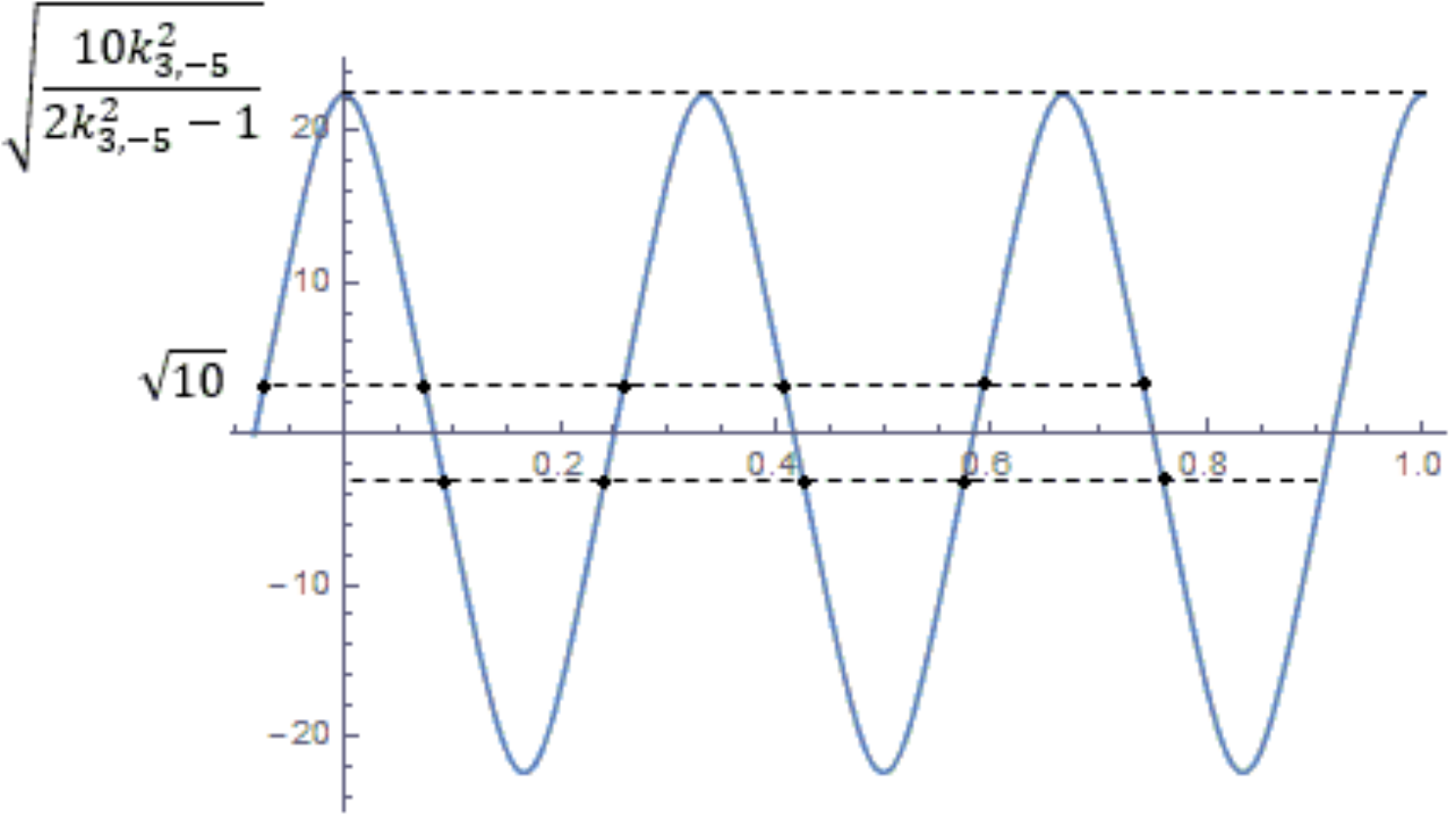}
        \caption{The function $c_{n,\omega}$ on $[-\frac{L}{4n},L]$, with $n=3$, $\omega=-5$ and $L=1$.}
        \label{bn}
\end{figure}


\begin{lemma}
	\label{LEM:approx} 
	A solution $(\omega ,c_{n,\omega }(\cdot \,;a))$ of problem (\ref{pb1}) solves problem $(P_+)$  
	if and only if 
	$a=\pm \gamma_{n,\omega }$ and $n L_1/L \in \N$, or
	\begin{equation}
	a=\pm \gamma_{n,\omega }
		\quad \text{and}\quad 
	\frac{L_1}{L}n-\left[ \frac{L_1}{L}n+\frac{1}{2}\right] =\mp \frac{2n}{L}\gamma_{n,\omega}
	\label{condition1}
	\end{equation}
	(with obvious relation between the signs of the right hand sides), i.e., 
	\[
	a = -s^+_n
	\quad \text{and}\quad 
	\left|\,\frac{L_1}{L}n-\left[ \frac{L_1}{L}n+\frac{1}{2}\right] \right|=\frac{2n}{L}\gamma_{n,\omega}
	\]
	where $s^+_n$ is the defined in (\ref{kn-sn}).
\end{lemma}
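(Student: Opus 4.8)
The plan is to reduce the boundary value problem $(P_+)$ to two scalar pointwise conditions via Lemma \ref{LEM:pb1}, and then read off the answer from the preimage structure already encoded in (\ref{gamma})--(\ref{x_j}). By Lemma \ref{LEM:pb1}, any solution of (\ref{pb1}) attaining the value $\sqrt{2|\omega|}$ is of the form $c_{n,\omega}(\cdot\,;a)$ with $a\in[-\frac{L}{4n},\frac{3L}{4n})$; and since $c_{n,\omega}(x;a)=c_{n,\omega}(x+a)$ (the auxiliary function $c_{n,\omega}(\cdot)=c_{n,\omega}(\cdot\,;0)$ translated by $a$), such a pair solves $(P_+)$ exactly when its two boundary values $u(0)=c_{n,\omega}(a)$ and $u(L_1)=c_{n,\omega}(L_1+a)$ both equal $\sqrt{2|\omega|}$. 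Thus the lemma amounts to describing, in terms of $a$ and $L_1$, when $a$ and $L_1+a$ are \emph{simultaneously} preimages of $\sqrt{2|\omega|}$ under $c_{n,\omega}$.

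First I would dispose of the condition at $0$. The preimages of $\sqrt{2|\omega|}$ under the $(L/n)$-periodic function $c_{n,\omega}$ are precisely the points $\equiv\pm\gamma_{n,\omega}\pmod{L/n}$, by (\ref{x_j}); and in the admissible window $a\in[-\frac{L}{4n},\frac{3L}{4n})$ (one full period) there are only the two representatives $a=-\gamma_{n,\omega}$ and $a=+\gamma_{n,\omega}$. Both lie strictly inside the window because $0<\gamma_{n,\omega}<L/(4n)$: indeed $\gamma_{n,\omega}$ is the first positive crossing of the level $\sqrt{2|\omega|}$, reached as $c_{n,\omega}$ decreases from its maximum $\sqrt{2|\omega|k_{n,\omega}^2/(2k_{n,\omega}^2-1)}>\sqrt{2|\omega|}$ down to its quarter-period zero at $L/(4n)$. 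Hence $u(0)=\sqrt{2|\omega|}$ forces $a=\pm\gamma_{n,\omega}$, which is the common first clause of both alternatives.

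Writing $a=\epsilon\gamma_{n,\omega}$ with $\epsilon\in\{+1,-1\}$, the remaining condition $c_{n,\omega}(L_1+a)=\sqrt{2|\omega|}$ states that $L_1+\epsilon\gamma_{n,\omega}\equiv\delta\gamma_{n,\omega}\pmod{L/n}$ for some $\delta\in\{+1,-1\}$, i.e. $L_1\equiv(\delta-\epsilon)\gamma_{n,\omega}\pmod{L/n}$. If $\delta=\epsilon$ this reads $L_1\equiv0$, i.e. $nL_1/L\in\N$, which is the first alternative (and indeed both signs of $a$ then work, since $L_1+a\equiv a=\pm\gamma_{n,\omega}$). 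If $\delta=-\epsilon$ it reads $L_1+2\epsilon\gamma_{n,\omega}=mL/n$ for some integer $m$, that is $\frac{L_1}{L}n=m-2\epsilon\frac{n}{L}\gamma_{n,\omega}$. Here the key quantitative input is $\frac{2n}{L}\gamma_{n,\omega}\in(0,1/2)$ (again from $0<\gamma_{n,\omega}<L/(4n)$): it places $\frac{L_1}{L}n$ in $(m-\tfrac12,m)$ when $\epsilon=+1$ and in $(m,m+\tfrac12)$ when $\epsilon=-1$, so that in both cases $[\frac{L_1}{L}n+\tfrac12]=m$. Subtracting $m$ yields $\frac{L_1}{L}n-[\frac{L_1}{L}n+\tfrac12]=-2\epsilon\frac{n}{L}\gamma_{n,\omega}=\mp\frac{2n}{L}\gamma_{n,\omega}$, which is exactly (\ref{condition1}) with $\epsilon=+1$ selecting the upper signs and $\epsilon=-1$ the lower signs. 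Finally, recalling $r_n$ and $s_n^+$ from (\ref{kn-sn}), this identity is $r_n=\mp\frac{2n}{L}\gamma_{n,\omega}$, so $\gamma_{n,\omega}=|s_n^+|$ and $a=\epsilon\gamma_{n,\omega}=-s_n^+$, giving the restated form.

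The argument is in essence bookkeeping; the only genuinely delicate point is the identification of the otherwise free integer $m$ with the floor $[\frac{L_1}{L}n+\tfrac12]$, which rests entirely on the sharp localization $\frac{2n}{L}\gamma_{n,\omega}\in(0,1/2)$. I would therefore take care to justify $0<\gamma_{n,\omega}<L/(4n)$ explicitly, as the whole translation of the congruence into the floor-function form of (\ref{condition1}) hinges on it.
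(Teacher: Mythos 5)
Your proof is correct and follows essentially the same route as the paper's: both reduce $(P_+)$ to the two level conditions $c_{n,\omega}(a)=c_{n,\omega}(L_1+a)=\sqrt{2|\omega|}$, pin down $a=\pm\gamma_{n,\omega}$ using the parameter window of Lemma \ref{LEM:pb1}, and identify the free integer $m$ with $\left[\frac{L_1}{L}n+\frac{1}{2}\right]$ via the sharp localization $0<\gamma_{n,\omega}<L/(4n)$. The only difference is presentational: you phrase the preimage analysis as congruences modulo the period $L/n$, which lets you treat both implications at once, whereas the paper enumerates the preimages $x_1^{(n,\omega)}<\cdots<x_{2n}^{(n,\omega)}$ in a fundamental window and verifies the two directions separately --- a cosmetic rather than substantive distinction.
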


\begin{remark} \label{exclude}
\eqref{condition1} and the case with $n L_1/L \in \N$ exclude each other. 
Indeed, $n L_1/L \in \N$ implies 
$\left|n L_1/L-\left[ n L_1/L+1/2\right] \right|=0$ and therefore 
\eqref{condition1} is impossible, since $\gamma_{n,\omega} > 0$.
\end{remark}

\begin{proof}
Denote $m_{n}=\left[ \frac{L_{1}}{L}n+\frac{1}{2}\right] $ for brevity.
If $a=-\gamma_{n,\omega }$, one has
	\[
	c_{n,\omega }(0;a)=c_{n,\omega }\left( 0;-\gamma _{n,\omega }\right)
	=c_{n,\omega }\left( -\gamma _{n,\omega };0\right) 
	=c_{n,\omega}(x_{1}^{(n,\omega )})=\sqrt{2|\omega |}.
	\]
	Moreover there exists $m\in\N$ such that 
	$$c_{n,\omega }(L_{1};a) =c_{n,\omega }(L_1-\gamma_{n,\omega })=c_{n,\omega }(m L/n-\gamma_{n,\omega })
	=c_{n,\omega }(x_1^{(n,\omega )})=\sqrt{2|\omega |}$$ 
	if $n L_1/L \in \N$, 	and one has
	$$c_{n,\omega }(L_{1};a) =c_{n,\omega }(L_{1}-\gamma_{n,\omega })=c_{n,\omega }(2\gamma _{n,\omega }+m_n L/n-\gamma_{n,\omega })
	= c_{n,\omega }(x_2^{(n,\omega )})=\sqrt{2|\omega |}$$
	if (\ref{condition1}) holds.
Hence $(\omega ,c_{n,\omega }(\cdot \,;a))$ solves problem $(P_{+})$. 
The conclusion similarly ensues if $a=\gamma_{n,\omega }$. 
	\\
	Now assume that $(\omega ,c_{n,\omega }(\cdot\,;a))$ solves problem $(P_{+})$. 
	Since $c_{n,\omega }(0;a)=c_{n,\omega }(a)$ and $-\frac{L}{4n}\leq a<\frac{3L}{4n}$, 
	the condition $c_{n,\omega }(0;a)=\sqrt{2|\omega|}$ means 
	\begin{equation}
	\text{either}\quad a=-\gamma _{n,\omega }\quad \text{or}\quad a=\gamma _{n,\omega }\,.
	\label{a}
	\end{equation}
	In the first case, we have 
	$c_{n,\omega }(L_{1};a)=c_{n,\omega}(L_{1}-\gamma _{n,\omega })=c_{n,\omega }(L_{1}+x_{1}^{(n,\omega )})$ with 
	\[
	x_{1}^{(n,\omega )}<L_{1}+x_{1}^{(n,\omega )}<L_{1}\leq\frac{L}{2}<L-\frac{L}{4n}
	\]
	(recall that $0<L_{1}\leq L/2$, since $L_{1}\leq L_{2}$), so that the condition 
	$c_{n,\omega }(L_{1};a)=\sqrt{2|\omega |}$ implies 
	$L_{1}+x_{1}^{(n,\omega)}\in \{x_{2}^{(n,\omega )},x_{3}^{(n,\omega )},\ldots ,x_{2n}^{(n,\omega)}\}$, i.e., 
	\begin{equation}
	L_{1}\in \left\{ x_{2}^{(n,\omega )}-x_{1}^{(n,\omega )},
	x_{3}^{(n,\omega)}-x_{1}^{(n,\omega )},\ldots ,x_{2n}^{(n,\omega )}-x_{1}^{(n,\omega)}\right\} .
	\label{L1}
	\end{equation}
	Recalling (\ref{x_j}), for $j=1,\ldots ,n$ one has
	\begin{eqnarray*}
		x_{2j-1}^{(n,\omega )} =\left( j-1\right) \frac{L}{n}+x_{1}^{(n,\omega )} \ \ \text{and}\ \ \
		x_{2j}^{(n,\omega )} =\left( j-1\right) \frac{L}{n}-x_{1}^{(n,\omega)}
		=\left( j-1\right) \frac{L}{n}+2\gamma _{n,\omega }+x_{1}^{(n,\omega )},
	\end{eqnarray*}
	so that (\ref{L1}) means that there exists $m\in \left\{ 0,...,n-1\right\} $ such that 
	\begin{equation} \label{eq:alt1}
	\text{$L_{1}=m\frac{L}{n}$\quad or \quad $L_{1}=m\frac{L}{n}+2\gamma _{n,\omega}$}.
	\end{equation}
	If the first of such cases occurs, then $m\geq 1$ (since $L_1 \neq 0$) and the proof is complete.
	If the second case holds true, we get 
	$nL_1/L=m+2n\gamma_{n,\omega }/L$ and therefore
	$nL_1/L-1/2<m<nL_1/L+1/2$,
	since $\gamma _{n,\omega }<L/(4n)$.
	This implies $nL_1/L+1/2\notin\N$ and $m=\left[nL_1/L+1/2\right] $,
	which completes the proof again. 
	Finally, if the second alternative of (\ref{a}) holds, a similar argument yields 
	\[
	\text{$L_{1}=m\frac{L}{n}$\quad or \quad $\frac{L_{1}}{L}n=m-\frac{2n}{L}\gamma _{n,\omega }$}
	\]
	instead of (\ref{eq:alt1}), and the conclusion follows as above.
\end{proof}

\begin{lemma}
	\label{LEM:approx.2} A solution $(\omega ,c_{n,\omega }(\cdot \,;a))$ of
	problem (\ref{pb1}) solves problem $\left( P_{-}\right) $ if and only if 
	$a=\pm \gamma_{n,\omega }$ and $n L_1/L +1/2 \in \N$, or
	\begin{equation}
	a=\pm \gamma_{n,\omega }
	 \quad \text{and}\quad
	\frac{L_{1}}{L}n-\left[ \frac{L_{1}}{L}n+\frac{1}{2}\right] =\pm \left( 
	\frac{1}{2}-\frac{2n}{L}\gamma_{n,\omega }\right)
	\label{condition1.2}
	\end{equation}
	(with obvious relation between the signs of the right hand sides), i.e., 
	\[
	a=-s_{n}^{-}
	\quad \text{and}\quad
	\left| \frac{L_{1}}{L}n-\left[ \frac{L_{1}}{L}n+\frac{1}{2}\right] \right| =%
	\frac{1}{2}-\frac{2n}{L}\gamma_{n,\omega }
	\]
	where $s_{n}^{-}$ is the defined in (\ref{kn-sn}).
\end{lemma}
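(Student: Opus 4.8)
The plan is to mirror the proof of Lemma~\ref{LEM:approx} almost verbatim, the only structural change being that the second boundary condition of $(P_-)$ reads $c_{n,\omega}(L_1;a)=-\sqrt{2|\omega|}$ rather than $+\sqrt{2|\omega|}$. Consequently the preimages of $-\sqrt{2|\omega|}$, i.e.\ the points $y_j^{(n,\omega)}$ of (\ref{y_j}), take over the role played by the $x_j^{(n,\omega)}$ of (\ref{x_j}). First I would dispose of the condition at $0$: since $c_{n,\omega}(0;a)=c_{n,\omega}(a)$ with $a\in[-\tfrac{L}{4n},\tfrac{3L}{4n})$, the equation $c_{n,\omega}(0;a)=\sqrt{2|\omega|}$ forces $a=\pm\gamma_{n,\omega}$ exactly as in Lemma~\ref{LEM:approx}; this half of the argument is unchanged because it involves only the value at $0$. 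This reduces the problem to analysing the two cases $a=-\gamma_{n,\omega}$ and $a=\gamma_{n,\omega}$ separately.

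In the case $a=-\gamma_{n,\omega}$ I would write $c_{n,\omega}(L_1;a)=c_{n,\omega}(L_1-\gamma_{n,\omega})=c_{n,\omega}(L_1+x_1^{(n,\omega)})$ and reuse the chain of inequalities $x_1^{(n,\omega)}<L_1+x_1^{(n,\omega)}<L_1\le L/2<L-\tfrac{L}{4n}$ (valid since $L_1\le L_2$) to confine $L_1-\gamma_{n,\omega}$ to the window containing all $2n$ preimages of $-\sqrt{2|\omega|}$. The requirement $c_{n,\omega}(L_1;a)=-\sqrt{2|\omega|}$ then amounts to $L_1-\gamma_{n,\omega}\in\{y_1^{(n,\omega)},\dots,y_{2n}^{(n,\omega)}\}$, and substituting the explicit formulas (\ref{y_j})---whose pairs are centred at the half-integer multiples $(2j-1)\tfrac{L}{2n}$ rather than at the integer multiples $(j-1)\tfrac{L}{n}$ of (\ref{x_j})---leaves exactly two alternatives: $L_1=(2j-1)\tfrac{L}{2n}$ or $L_1=(2j-1)\tfrac{L}{2n}+2\gamma_{n,\omega}$ for some $j\in\{1,\dots,n\}$.

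It then remains to rewrite these in the floor-function form of the statement. The first gives $nL_1/L=j-\tfrac12$, i.e.\ $nL_1/L+\tfrac12\in\N$. For the second, writing $nL_1/L=(j-\tfrac12)+\tfrac{2n}{L}\gamma_{n,\omega}$ and invoking $0<\tfrac{2n}{L}\gamma_{n,\omega}<\tfrac12$ (since $0<\gamma_{n,\omega}<L/(4n)$, as in the proof of Lemma~\ref{LEM:approx}) pins down $[nL_1/L+\tfrac12]=j$ and yields $\tfrac{L_1}{L}n-[\tfrac{L_1}{L}n+\tfrac12]=-(\tfrac12-\tfrac{2n}{L}\gamma_{n,\omega})$, which is (\ref{condition1.2}) with the lower sign. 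The symmetric computation for $a=\gamma_{n,\omega}$ gives the upper sign, so that $a=\pm\gamma_{n,\omega}$ pairs with $\pm(\tfrac12-\tfrac{2n}{L}\gamma_{n,\omega})$; the identity $a=-s_n^-$ is then read off from definition (\ref{kn-sn}). The converse implication is the reverse reading of these equalities: each admissible pair $(a,L_1)$ sends $0$ to a preimage of $+\sqrt{2|\omega|}$ and $L_1$ to a preimage of $-\sqrt{2|\omega|}$, hence solves $(P_-)$.

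I expect the only genuinely delicate point to be the sign bookkeeping: one must track whether a given $y_j^{(n,\omega)}$ carries an odd or even index (this selects the $-\gamma_{n,\omega}$ versus $+\gamma_{n,\omega}$ offset in (\ref{y_j})) and check that the half-period shift distinguishing the $y$-preimages from the $x$-preimages is exactly what turns the term $\mp\tfrac{2n}{L}\gamma_{n,\omega}$ of Lemma~\ref{LEM:approx} into the term $\pm(\tfrac12-\tfrac{2n}{L}\gamma_{n,\omega})$ of (\ref{condition1.2}). Everything else transcribes the argument already established for $(P_+)$.
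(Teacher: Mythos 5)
Your proposal is correct and follows essentially the same route as the paper's own proof: the same reduction of the condition at $0$ to $a=\pm\gamma_{n,\omega}$, the same confinement of $L_1+x_1^{(n,\omega)}$ (resp.\ $L_1+x_2^{(n,\omega)}$) among the preimages $y_j^{(n,\omega)}$ of $-\sqrt{2|\omega|}$ via \eqref{y_j}, and the same floor-function bookkeeping using $0<\gamma_{n,\omega}<L/(4n)$ to pin down $\left[nL_1/L+1/2\right]$ and the sign pairing $a=\pm\gamma_{n,\omega}\leftrightarrow\pm\left(\tfrac12-\tfrac{2n}{L}\gamma_{n,\omega}\right)$. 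The only cosmetic difference is that you obtain sufficiency by reversing the chain of equivalences, whereas the paper verifies it by a short direct computation first; this is not a substantive divergence.
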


\begin{remark} \label{exclude.2}
Condition \eqref{condition1.2} and the case with $n L_1/L +1/2 \in \N$ exclude each other. 
Indeed, \eqref{condition1.2} with $n L_1/L+1/2 \in \N$ implies 
$\gamma_{n,\omega} = 0$ or $\gamma_{n,\omega} =L/(2n)$, which is impossible since $0<\gamma_{n,\omega} <L/(4n)$.
\end{remark}

\begin{proof}
Denote $m_{n}=\left[ \frac{L_{1}}{L}n+\frac{1}{2}\right] $ for brevity.
If $a=\gamma_{n,\omega }$, one has $c_{n,\omega }(0;a)=\sqrt{2|\omega |}$ as in the proof of Lemma \ref{LEM:approx}.
	Moreover there exists $m\in\N$ such that 
	$$c_{n,\omega }(L_{1};a) =c_{n,\omega }(L_1-\gamma_{n,\omega })
	=c_{n,\omega }\left(\frac{m L}{n}-\frac{L}{2n}-\gamma_{n,\omega }\right)
	=c_{n,\omega }(-y_2^{(n,\omega )})=-\sqrt{2|\omega |}$$ 
	if $n L_1/L \in \N$ (note that $c_{n,\omega }$ is even), and one has
	$$c_{n,\omega }(L_{1};a) =c_{n,\omega }(L_{1}-\gamma_{n,\omega })
	=c_{n,\omega }\left(m_n \frac{L}{n}-\frac{1}{2n}+\gamma_{n,\omega }\right)
	= c_{n,\omega }(-y_1^{(n,\omega )})=-\sqrt{2|\omega |}$$
	if (\ref{condition1}) holds.
This implies that $(\omega ,c_{n,\omega }(\cdot \,;a))$ solves problem $(P_{+})$. 
A similar computation yields the same result if $a=\gamma_{n,\omega }$.

	Now assume that $(\omega ,c_{n,\omega }(\cdot \,;a))$ solves problem $\left( P_{-}\right) $.
	Since 
	$-\frac{L}{4n}\leq a<\frac{3L}{4n}$, 
	condition $c_{n,\omega }(0;a)=\sqrt{2|\omega|}$ means 
	\begin{equation}
	a=-\gamma _{n,\omega }\quad \text{or}\quad a=\gamma _{n,\omega }\,.
	\label{a.2}
	\end{equation}
	In the first case, we have $c_{n,\omega }(L_{1};a)=c_{n,\omega}(L_{1}+x_{1}^{(n,\omega )})$ with 
	\[
	x_{1}^{(n,\omega )}<L_{1}+x_{1}^{(n,\omega )}<L_{1}\leq \frac{L}{2}<L-\frac{L}{4n}
	\]
	(recall that $0<L_{1}\leq L/2$, since $L_{1}\leq L_{2}$), so that condition 
	$c_{n,\omega }(L_{1};a)=-\sqrt{2|\omega|}$ implies 
	$L_{1}+x_{1}^{(n,\omega )}\in\{y_{1}^{(n,\omega )},y_{2}^{(n,\omega )},...,y_{2n}^{(n,\omega )}\}$, i.e., 
	\begin{equation}
	L_{1}\in \left\{ y_{1}^{(n,\omega )}-x_{1}^{(n,\omega )},
	y_{2}^{(n,\omega)}-x_{1}^{(n,\omega )},...,
	y_{2n}^{(n,\omega )}-x_{1}^{(n,\omega )}\right\} .
	\label{L1.2}
	\end{equation}
	Recalling (\ref{y_j}), for $j=1,\ldots ,n$ one has
	\begin{eqnarray*}
		y_{2j-1}^{(n,\omega )} &=&\left( 2j-1\right) \frac{L}{2n}+x_{1}^{(n,\omega)}, \\
		y_{2j}^{(n,\omega )} &=&\left( 2j-1\right) \frac{L}{2n}+\gamma _{n,\omega}
		=\left( 2j-1\right) \frac{L}{2n}+x_{1}^{(n,\omega )}+2\gamma _{n,\omega },
	\end{eqnarray*}
	so that (\ref{L1.2}) means that there exists $j\in \left\{ 1,...,n\right\} $
	such that 
	\[
	L_{1}=\frac{2j-1}{2n}L\quad \text{or}\quad L_{1}=\frac{2j-1}{2n}L+2\gamma_{n,\omega }\,.
	\]
	In the first case, it follows that $nL_1/L+1/2\in\N$ and this completes the proof. 
	In the second case, we get $nL_1/L+1/2=j+2n\gamma_{n,\omega }/L$.
	Since $0<\gamma _{n,\omega }<L/(4n)$, this implies $j<nL_1/L+1/2<j+1/2$
	and therefore $j=\left[ nL_1/L+1/2\right] $, so that we conclude 
	$$
	\frac{L_{1}}{L}n-\left[ \frac{L_{1}}{L}n+\frac{1}{2}\right] 
	=-\left( \frac{1}{2}-\frac{2n}{L}\gamma _{n,\omega }\right) <0.
	$$
	In the second case of (\ref{a.2}), we have 
	$c_{n,\omega}(L_{1};a)=c_{n,\omega }(L_{1}+x_{2}^{(n,\omega )})$ with 
	\[
	x_{2}^{(n,\omega )}<L_{1}+x_{2}^{(n,\omega )}<\frac{L}{2}+\frac{L}{4n}\leq L-\frac{L}{4n},
	\]
	so that the condition 
	$c_{n,\omega }(L_{1};a)=-\sqrt{2|\omega|}$ implies 
	$L_{1}+x_{2}^{(n,\omega )}\in\{y_{1}^{(n,\omega )},y_{2}^{(n,\omega )},...,y_{2n}^{(n,\omega )}\}$, i.e., 
	\begin{equation}
	L_{1}\in \left\{ 
	y_{1}^{(n,\omega )}-x_{2}^{(n,\omega )},
	y_{2}^{(n,\omega)}-x_{2}^{(n,\omega )},...,
	y_{2n}^{(n,\omega )}-x_{2}^{(n,\omega )}\right\} .
	\label{L1bis}
	\end{equation}
	Recalling (\ref{y_j}), for $j=1,\ldots ,n$ one has
	\begin{eqnarray*}
		y_{2j-1}^{(n,\omega )} &=&\left( 2j-1\right) \frac{L}{2n}-\gamma _{n,\omega}
		=\left( 2j-1\right) \frac{L}{2n}-2\gamma _{n,\omega }+x_{2}^{(n,\omega )},
		\\
		y_{2j}^{(n,\omega )} &=&\left( 2j-1\right) \frac{L}{2n}+x_{2}^{(n,\omega )},
	\end{eqnarray*}
	so that (\ref{L1bis}) means that there exists $j\in \left\{ 1,...,n\right\}$
	such that 
	\begin{equation} \label{eq:alt2}
	\text{$L_{1}=\frac{2j-1}{2n}L$\quad or\quad $L_{1}=\frac{2j-1}{2n}L-2\gamma_{n,\omega }$}.
	\end{equation}
	The first of such cases is the same of above, while in the second one 
	there exists $j\in \left\{ 1,...,n\right\} $ such that 
	\begin{equation}
	\frac{L_{1}}{L}n+\frac{1}{2}=j-\frac{2n}{L}\gamma _{n,\omega }.
	\label{L1/L}
	\end{equation}
	Since $0<\gamma _{n,\omega }<\frac{L}{4n}$, this implies $j-1/2<nL_1/L+1/2<j$
	and therefore $j=\left[nL_1/L+1/2\right] +1$, so that (\ref{L1/L}) gives 
	\[
	\frac{L_{1}}{L}n-\left[ \frac{L_{1}}{L}n+\frac{1}{2}\right] 
	=\frac{1}{2}-\frac{2n}{L}\gamma _{n,\omega }>0.
	\]
	This ends the proof.
\end{proof}

For future reference, we define a function $\varphi \colon ( 1/\sqrt{2},1) \rightarrow \R $ by setting 
\begin{equation}\label{phi}
\varphi(k)=\frac{
	\displaystyle\int_{\sqrt{\frac{2k^2-1}{k^2}}}^1\frac{dt}{\sqrt{\strut\left(1-t^2\right)\left(1-k^2(1-t^2)\right)}}}
{\displaystyle\int_0^1\frac{dt}{\sqrt{\strut\left(1-t^2\right) \left(1-k^2 t^2\right) }}}.
\end{equation}
Note that the denominator is the elliptic integral $K(k)$. Such a function $\varphi$ is continuous and strictly decreasing, since 
the denominator $K\left( k\right) $ is positive and strictly increasing and the numerator is positive and strictly decreasing. 
Indeed, one has 
\[
\displaystyle\frac{d}{dk}
\int_{\sqrt{\frac{2k^2-1}{k^2}}}^1\frac{dt}{\sqrt{\strut \left(1-t^2\right)\left( 1-k^2+k^2 t^2\right) }} =
\]
\[
=\int_{\sqrt{\frac{2k^2-1}{k^2}}}^{1}\frac{k \sqrt{1-t^2}}{\left( 1-k^2(1-t^2)\right)^{3/2}}dt
-\frac{1}{k^2\sqrt{\strut \left( 2k^2-1\right) \left( 1-k^2\right) }},
\]
where $k^2\sqrt{2k^2-1}\sqrt{1-k^2}$ has a maximum point on $( 1/\sqrt{2},1)$ for $k^2=(9+\sqrt{17})/16$
and for all $k\in( 1/\sqrt{2},1)$ we have
\begin{eqnarray}
\int_{\sqrt{\frac{2k^2-1}{k^2}}}^{1}\frac{k \sqrt{1-t^2}}{\left( 1-k^2(1-t^2)\right)^{3/2}}dt
&\leq& 
\frac{k \sqrt{1-\frac{2k^2-1}{k^2}}}{\left( 1-k^2(1-\frac{2k^2-1}{k^2})\right)^{3/2}}
\left(1-\sqrt{\frac{2k^2-1}{k^2}}\right) \nonumber \\
&=&\frac{\sqrt{1-k^2}}{k^3} \left(1-\sqrt{2-\frac{1}{k^2}}\right) 
\leq 2
\label{decr}
\end{eqnarray}
(the integrand is decreasing in $t$ and the left-hand side of (\ref{decr}) is decreasing in $k$), so  
\[
\displaystyle\frac{d}{dk}
\int_{\sqrt{\frac{2k^2-1}{k^2}}}^1\frac{dt}{\sqrt{\strut \left(1-t^2\right)\left( 1-k^2+k^2 t^2\right) }} 
\leq 2-\frac{128}{(9+\sqrt{17}) \sqrt{3\sqrt{17}-5} }  < 0.
\]
Moreover one has the two limits
\[
\lim_{k\rightarrow 1^{-}}\varphi \left( k\right) 
=\frac{\displaystyle\int_1^1\frac{dt}{\sqrt{\strut \left(1-t^2\right) t^2}}}{\displaystyle\int_0^1\frac{dt}{1-t^2}}=0\ ,\ \ \ 
\lim_{k\rightarrow \left( \frac{1}{\sqrt{2}}\right)^{+}}\varphi \left(k\right) 
=\frac{\displaystyle\int_0^1\frac{dt}{\sqrt{\strut \left(1-t^2\right) \left( 1+t^2\right) }}}
{\displaystyle\int_0^1\frac{dt}{\sqrt{\strut \left(1-t^2\right) \left( 2-t^2\right) }}}=1,
\]
where the last equality follows from the fact that both numerator and
denominator become 
$
\int_0^{\pi /2}\frac{d\theta}{\sqrt{1+\sin^2 \theta}}
$ 
by the changes of variable $t=\sin \theta $ and $t=\cos\theta $ respectively. 
Hence $\varphi \left( (1/\sqrt{2},1)\right) =\left(0,1\right) $.

\begin{lemma}
	\label{LEM:equation}
	For every $n\in \N $, the equation 
	\begin{equation}
	\left| \,\frac{L_1}{L}n-\left[ \frac{L_1}{L}n+\frac{1}{2}\right]\right| =\frac{2n}{L}\gamma_{n,\omega}
	\label{equation}
	\end{equation}
	has one solution $\omega <0$ if $nL_1/L\notin\N$ and $nL_1/L+1/2\notin\N$, has no solution otherwise.
\end{lemma}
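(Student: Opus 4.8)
The plan is to read the equation \eqref{equation} as $\beta_n = R_n(\omega)$, separating the two sides by their dependence on $\omega$: the left-hand side $\beta_n := \left|\frac{L_1}{L}n - \left[\frac{L_1}{L}n + \frac{1}{2}\right]\right|$ is a constant for fixed $n$, while the right-hand side $R_n(\omega) := \frac{2n}{L}\gamma_{n,\omega}$ is a function of $\omega<0$ alone. First I would observe that $\beta_n$ is exactly the distance from $\frac{L_1}{L}n$ to the nearest integer (with the rounding $[\,\cdot + \frac12]$), so $\beta_n \in [0,\frac12]$, with $\beta_n = 0$ if and only if $nL_1/L \in \N$ and $\beta_n = \frac12$ if and only if $nL_1/L + \frac12 \in \N$; in every remaining case $\beta_n \in (0,\frac12)$. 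This already matches the trichotomy of the statement, so the lemma reduces to proving that $R_n$ attains each value of the open interval $(0,\frac12)$ exactly once and never reaches the endpoints $0$ and $\frac12$.

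The key step is to compute $R_n$ in closed form. Writing $k = k_{n,\omega} = S^{-1}(L\sqrt{|\omega|}/n)$ and imposing $c_{n,\omega}(\gamma_{n,\omega}) = \sqrt{2|\omega|}$ in \eqref{c} gives $\cn\!\left(\sqrt{|\omega|/(2k^2-1)}\,\gamma_{n,\omega};k\right) = \sqrt{(2k^2-1)/k^2}$, whence, inverting $\cn$ by the standard integral representation of $\arccn$,
\[
\gamma_{n,\omega} = \sqrt{\frac{2k^2-1}{|\omega|}}\int_{\sqrt{(2k^2-1)/k^2}}^{1}\frac{dt}{\sqrt{(1-t^2)(1-k^2(1-t^2))}}.
\]
This is precisely \eqref{gamma0}, so the computation also verifies the equivalence of definitions \eqref{gamma} and \eqref{gamma0}. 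Dividing by the period $L/n = S(k)/\sqrt{|\omega|} = 4\sqrt{2k^2-1}\,K(k)/\sqrt{|\omega|}$ and recalling the definition \eqref{phi} of $\varphi$ (whose denominator is $K(k)$), the factors $\sqrt{2k^2-1}$ and $\sqrt{|\omega|}$ cancel and I obtain the clean identity $R_n(\omega) = \frac{1}{2}\varphi(k_{n,\omega})$.

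From here the conclusion follows by monotonicity. As $\omega$ ranges over $(-\infty,0)$, the quantity $L\sqrt{|\omega|}/n$ ranges over $(0,+\infty) = S\big((1/\sqrt{2},1)\big)$, so $k_{n,\omega} = S^{-1}(L\sqrt{|\omega|}/n)$ is a continuous strictly decreasing bijection of $\omega$ onto $(1/\sqrt{2},1)$, using that $S$ is a strictly increasing homeomorphism. Composing with the continuous strictly decreasing function $\varphi\colon (1/\sqrt{2},1) \to (0,1)$, which satisfies $\varphi(k)\to 1$ as $k\to(1/\sqrt{2})^+$ and $\varphi(k)\to 0$ as $k\to 1^-$, I conclude that $R_n(\omega) = \frac{1}{2}\varphi(k_{n,\omega})$ is a continuous strictly increasing bijection from $(-\infty,0)$ onto the open interval $(0,\frac12)$. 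Hence $\beta_n = R_n(\omega)$ has a unique solution $\omega<0$ exactly when $\beta_n \in (0,\frac12)$, i.e.\ when $nL_1/L \notin \N$ and $nL_1/L + \frac12 \notin \N$, and no solution when $\beta_n \in \{0,\frac12\}$, which is the asserted dichotomy.

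I expect the main obstacle to be the explicit identification $R_n(\omega) = \frac{1}{2}\varphi(k_{n,\omega})$, in particular the inversion of $\cn$ and the check that the lower integration limit forced by $c_{n,\omega}(\gamma_{n,\omega}) = \sqrt{2|\omega|}$ coincides with the one appearing in \eqref{phi}; once this identity and the monotonicity and limit properties of $S$ and $\varphi$ are in hand, the counting is immediate. A minor but essential point is that the endpoint values $0$ and $\frac12$ are only limits of $R_n$ and are never attained on $(-\infty,0)$, which is exactly what forces the ``no solution'' alternative in the two excluded arithmetic cases.
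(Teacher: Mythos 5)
Your proposal is correct and follows essentially the same route as the paper: it derives the integral formula for $\gamma_{n,\omega}$ by inverting $\cn$, identifies $\gamma_{n,\omega}=\frac{L}{4n}\varphi(k_{n,\omega})$ (your $R_n(\omega)=\frac{1}{2}\varphi(k_{n,\omega})$ is the paper's equation $2\beta_n=\varphi\bigl(S^{-1}(L\sqrt{|\omega|}/n)\bigr)$ up to a factor of $2$), and then concludes by the same monotonicity and range analysis of $S$ and $\varphi$, with the same trichotomy on the constant left-hand side. The only cosmetic difference is that the paper normalizes so the right-hand side sweeps $(0,1)$ rather than $(0,\tfrac12)$.
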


\begin{proof}
	As $\gamma_{n,\omega}=x_2^{(n,\omega )}$ is the unique value in $\left( 0,\frac{L}{2n}\right) $ such that
	$c_{n,\omega }(\gamma_{n,\omega})=\sqrt{2|\omega|}$, i.e.,
	\[
	\cn \left( \sqrt{\frac{\left| \omega \right| }{2k_{n,\omega }^2-1}}\gamma_{n,\omega};k_{n,\omega }\right) =\sqrt{\frac{2k_{n,\omega}^2-1}{k_{n,\omega }^2}}, 
	\]
	one has that
	\begin{eqnarray}
	\gamma_{n,\omega} &=& \sqrt{\frac{2k_{n,\omega }^2-1}{\left| \omega\right| }}
	\arccn\left( \sqrt{\frac{2k_{n,\omega }^2-1}{k_{n,\omega }^2}};k_{n,\omega }\right)  \nonumber \\
	&=& \sqrt{\frac{2k_{n,\omega }^2-1}{\left| \omega \right| }}\int_{\sqrt{\frac{2k_{n,\omega}^2-1}{k_{n,\omega}^2}}}^1
	\frac{dt}{\sqrt{\left(1-t^2\right) \left(1-k_{n,\omega }^2(1-t^2)\right) }}
	\label{d=}
	\end{eqnarray}
	(see \cite{handbook} for the inverse function $\arccn$ of $\cn $ and its representation as an elliptic integral). 
	Since (\ref{k}) and (\ref{S:=}) imply 
	$
	\sqrt{\frac{2k_{n,\omega }^2-1}{\left| \omega \right| }}=\frac{L}{4nK\left( k_{n,\omega }\right) }, 
	$
	from equality (\ref{d=}) we deduce that 
	$\gamma_{n,\omega }=\frac{L}{4n}\varphi \left(k_{n,\omega }\right) $ for all $n\in \N $ and $\omega<0$. 
	Then, recalling the definition (\ref{k}) of $k_{n,\omega }$, equation (\ref{equation}) is equivalent to 
	\begin{equation}
	2\left| \,\frac{L_1 }{L}n-\left[ \frac{L_1 }{L}n+\frac{1}{2}\right]\right| 
	=\varphi \left( S^{-1}\left( \frac{L}{n}\sqrt{\left| \omega \right| }\right) \right) .
	\label{equation 1}
	\end{equation}
	Recalling that $S$ is strictly increasing, continuous and such that 
	$S\left((1/\sqrt{2},1)\right) =\left( 0,+\infty \right) $, 
	the right hand side of (\ref{equation 1}) defines a continuous and strictly increasing function of 
	$\omega $ from $\left( -\infty ,0\right) $ onto $\left( 0,1\right) $. 
	On the other hand, since $t-1<\left[ t\right] \leq t$ for all $t\in \R $, we have 
	\[
	-1\leq 2\left( \frac{L_1 }{L}n-\left[ \frac{L_1 }{L}n+\frac{1}{2}\right]\right) <1, 
	\]
	where the first sign is an equality if and only if $nL_1/L+1/2\in\N$, 
	and the second member vanishes if and only if $nL_1/L\in\N$.
	In these cases, equation (\ref{equation 1}) is impossible.
	Otherwise, we have 
	\[
	2\left| \,\frac{L_1 }{L}n-\left[ \frac{L_1 }{L}n+\frac{1}{2}\right]\right| \in \left( 0,1\right) 
	\]
	and therefore equation (\ref{equation 1}) has
	a unique solution $\omega <0$, which is given by 
	\begin{equation}
	\omega =-\frac{n^2}{L^2}\left[ \left( S\circ \varphi ^{-1}\right) 
	\left(2\left| \,\frac{L_1 }{L}n-\left[ \frac{L_1 }{L}n+\frac{1}{2}\right]\right| \right) \right]^2.
	\label{w}
	\end{equation}
\end{proof}

\begin{lemma}
	\label{LEM:equation.2}For every $n\in \mathbb{N}$, the equation 
	\begin{equation}
	\left| \frac{L_{1}}{L}n-\left[ \frac{L_{1}}{L}n+\frac{1}{2}\right] \right| =%
	\frac{1}{2}-\frac{2n}{L}\gamma_{n,\omega }.  \label{equation.2}
	\end{equation}
	has one solution $\omega <0$ if $nL_1/L\notin\N$ and $nL_1/L+1/2\notin\N$, has no solution otherwise.
\end{lemma}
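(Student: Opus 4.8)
The plan is to follow the proof of Lemma \ref{LEM:equation} almost verbatim, since equation (\ref{equation.2}) differs from (\ref{equation}) only through the replacement of $\frac{2n}{L}\gamma_{n,\omega}$ by $\frac{1}{2}-\frac{2n}{L}\gamma_{n,\omega}$. First I would recall the closed-form expression $\gamma_{n,\omega}=\frac{L}{4n}\varphi(k_{n,\omega})$ (valid for all $n\in\N$, $\omega<0$) together with the definition (\ref{k}) of $k_{n,\omega}$, both obtained in the course of the proof of Lemma \ref{LEM:equation}. Since this gives $\frac{2n}{L}\gamma_{n,\omega}=\frac{1}{2}\varphi(k_{n,\omega})$, the right-hand side of (\ref{equation.2}) equals $\frac{1}{2}\bigl(1-\varphi(k_{n,\omega})\bigr)$, and multiplying (\ref{equation.2}) by $2$ shows that it is equivalent to
\[
2\left|\,\frac{L_1}{L}n-\left[\frac{L_1}{L}n+\frac{1}{2}\right]\right| = 1-\varphi\left(S^{-1}\left(\frac{L}{n}\sqrt{\left|\omega\right|}\right)\right).
\]

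Next I would carry out the monotonicity analysis, which can be imported directly from Lemma \ref{LEM:equation}: there it is shown that $\omega\mapsto\varphi(S^{-1}(\frac{L}{n}\sqrt{|\omega|}))$ is continuous and strictly increasing from $(-\infty,0)$ onto $(0,1)$. Hence the right-hand side above, namely $1-\varphi(S^{-1}(\frac{L}{n}\sqrt{|\omega|}))$, is continuous and strictly decreasing from $(-\infty,0)$ onto $(0,1)$ --- the only structural difference from Lemma \ref{LEM:equation}, where the reduced right-hand side was increasing. The left-hand side is independent of $\omega$ and, by the elementary bound $t-1<[t]\le t$, satisfies $-1\le 2(\frac{L_1}{L}n-[\frac{L_1}{L}n+\frac{1}{2}])<1$; thus $2|\frac{L_1}{L}n-[\frac{L_1}{L}n+\frac{1}{2}]|\in[0,1]$, attaining the value $0$ exactly when $nL_1/L\in\N$ and the value $1$ exactly when $nL_1/L+\frac{1}{2}\in\N$.

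Finally I would conclude by the intermediate value theorem combined with strict monotonicity. If $nL_1/L\in\N$ the left-hand side vanishes while the right-hand side is strictly positive for every finite $\omega<0$, so there is no solution; if $nL_1/L+\frac{1}{2}\in\N$ the left-hand side equals $1$ while the right-hand side is strictly less than $1$, so again there is no solution. In the remaining case the left-hand side lies in $(0,1)$, which is precisely the range of the strictly decreasing continuous right-hand side, so there is exactly one $\omega<0$. Thus the excluded cases and the final count coincide with those of Lemma \ref{LEM:equation}. I do not expect a genuine obstacle: the only points that truly require verification are the algebraic substitution $\frac{2n}{L}\gamma_{n,\omega}=\frac{1}{2}\varphi(k_{n,\omega})$ and the harmless reversal of monotonicity, after which the counting argument is identical to the one already established.
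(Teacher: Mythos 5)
Your proposal is correct and matches the paper's proof essentially verbatim: the paper likewise uses $\gamma_{n,\omega}=\frac{L}{4n}\varphi(k_{n,\omega})$ to rewrite (\ref{equation.2}) as $2\left|\,\frac{L_1}{L}n-\left[\frac{L_1}{L}n+\frac{1}{2}\right]\right|=1-\varphi\left(S^{-1}\left(\frac{L}{n}\sqrt{|\omega|}\right)\right)$, observes that the right-hand side is a continuous, strictly decreasing function of $\omega$ from $(-\infty,0)$ onto $(0,1)$ while the left-hand side lies in $(0,1)$ precisely when $nL_1/L\notin\N$ and $nL_1/L+1/2\notin\N$ (and equals $0$ or $1$ otherwise), and concludes exactly as you do. The only cosmetic difference is that the paper also records the explicit solution $\omega=-\frac{n^{2}}{L^{2}}\left[\left(S\circ\varphi^{-1}\right)\left(1-2\left|\,\frac{L_1}{L}n-\left[\frac{L_1}{L}n+\frac{1}{2}\right]\right|\right)\right]^{2}$, which your monotonicity argument yields implicitly.
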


\begin{proof}
	We argue as in the proof of Lemma \ref{LEM:equation}. 
	Since $\gamma_{n,\omega }=\frac{L}{4n}\varphi \left( k_{n,\omega }\right)$ 
	for all $n\in \mathbb{N}$ and all $\omega <0$, equation (\ref{equation.2}) is equivalent to 
	\[
	2\left| \,\frac{L_{1}}{L}n-\left[ \frac{L_{1}}{L}n+\frac{1}{2}\right]\right| 
	=1-\varphi \left( S^{-1}\left( \frac{L}{n}\sqrt{\left| \omega\right| }\right) \right) 
	\]
	where 
	right hand side defines a continuous and strictly decreasing function of 
	$\omega $ from $\left( -\infty ,0\right) $ onto $\left( 0,1\right) $,
	and left hand side belongs to $\left( 0,1\right) $ if $nL_1/L\notin\N$ and $nL_1/L+1/2\notin\N$, and equals $0$ or $1$ otherwise.
	In this latter case, the equation has no solution. In the former case, it has a unique solution $\omega <0$, given by 
	\begin{equation}
	\omega =-\frac{n^{2}}{L^{2}}\left[ \left( S\circ \varphi ^{-1}\right) 
	\left(1-2\left| \,\frac{L_{1}}{L}n-\left[ \frac{L_{1}}{L}n+\frac{1}{2}\right]\right| \right) 
	\right] ^{2}.  
	\label{w.2}
	\end{equation}
\end{proof}%

\begin{proof}[Proof of Theorems \ref{thm1} and \ref{thmQ1}]
	The conclusions easily follow from Lemmas \ref{LEM:pb1}-\ref{LEM:equation.2}, so we just make some remarks.
	Concerning the case $L_1/L\notin\Q$, we point out that conditions $nL_1/L\notin\N$ and $nL_1/L+1/2\notin\N$ are always true and 
	the fact that the set of solutions is countable (not finite) follows from Theorem \ref{thm2}, where we show that the sequences $\{\omega_n^\pm\}$ are unbounded below.
	As to the case $L_1/L=p/q\in\Q$ with $p,q$ coprime, we observe that condition $n L_1/L\in\N$ is equivalent to $n\in\N q$, 
	whereas condition $n L_1/L+1/2\in\N$ is impossible if $q$ is odd and amounts to $n\in(2\N-1) q/2$ if $q$ is even.
	\end{proof}


%

\section{Proof of Theorem \ref{thm2}      \label{SEC:pf2}}

This section is devoted to the proof of Theorem \ref{thm2}, 
so assume $L_1 /L_2 \in \R \setminus \Q $  
and let $\{\omega^\pm_n\}_{n\geq 1}$ be the sequences of Theorem \ref{thm1}.

According to the proof of Lemmas \ref{LEM:equation} and \ref{LEM:equation.2} 
(see in particular (\ref{w}) and (\ref{w.2})), for all $n\geq 1$ we have 
\[
\sqrt{| \omega^+_n | }=\frac{n}{L} G\left(\xi_n\right) 
\quad \text{and}\quad 
\sqrt{| \omega^-_n | }=\frac{n}{L} G\left(1-\xi_n\right),
\]
where 
\[
G:=S\circ \varphi^{-1}\quad \text{and}\quad 
\xi_n:=2\left| \,\frac{L_1 }{L}n-\left[ \frac{L_1 }{L} n+\frac{1}{2}\right] \right| .
\]
Clearly, with a view to proving Theorem \ref{thm2}, we can equivalently
study the limit points of $\{\hspace{-0.1cm} \sqrt{| \omega^\pm_n|}\}$. 
In doing this, we will exploit some well known results from the metric theory of
Diophantine approximations, for which we refer to \cite{Cassels,Niven,Schmidt}.

Denote 
\[
\alpha =\frac{L_1}{L}
\]
for brevity and observe that 
\[
\xi_n=2\left| \,\alpha n-\left[ \alpha n+\frac{1}{2}\right] \right| 
=\left|\,2\left\{ \alpha n+\frac{1}{2}\right\} -1\right| \in \left( 0,1\right) . 
\]
Here and in the following, $\left\{t\right\} =t-\left[ t\right] $ denotes
the fractional part of $t\in \R $. Note that the cases $\xi_n=0$ and $\xi_n=1$ 
are ruled out because $\alpha \notin \Q $.

\begin{lemma}
	\label{LEM:unbdd}
	The sequence $\{\hspace{-0.1cm} \sqrt{| \omega^+_n|}\}$ is unbounded.
\end{lemma}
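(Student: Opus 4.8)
The plan is to exploit the formula $\sqrt{|\omega_n^+|}=\frac{n}{L}\,G(\xi_n)$ with $G:=S\circ\varphi^{-1}$, and to show that the prefactor $n/L$ forces divergence along a suitable subsequence, provided $G(\xi_n)$ stays bounded away from $0$ there. The only work is to locate infinitely many indices where $\xi_n$ is bounded away from its dangerous value $1$.

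First I would record the qualitative behaviour of $G$. Since $\varphi$ is continuous, strictly decreasing and maps $(1/\sqrt{2},1)$ onto $(0,1)$, with $\varphi(k)\to 1$ as $k\to(1/\sqrt{2})^+$ and $\varphi(k)\to 0$ as $k\to 1^-$, its inverse $\varphi^{-1}$ is continuous and strictly decreasing from $(0,1)$ onto $(1/\sqrt{2},1)$. Composing with the continuous, strictly increasing $S$, which satisfies $S\left((1/\sqrt{2},1)\right)=(0,+\infty)$, shows that $G=S\circ\varphi^{-1}$ is continuous, strictly decreasing and strictly positive on $(0,1)$. In particular $G$ is bounded below by the positive constant $G(1/2)$ on the whole interval $(0,1/2]$; this is the only property of $G$ I need.

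Next I would produce infinitely many indices on which $\xi_n\leq 1/2$. Recall that $\xi_n=\left|\,2\left\{\alpha n+\frac12\right\}-1\right|$ with $\alpha=L_1/L$ irrational, so that $\xi_n\leq 1/2$ is equivalent to $\left\{\alpha n+\frac12\right\}\in[1/4,3/4]$. Since $\alpha$ is irrational, the sequence $\left\{\alpha n+\frac12\right\}$ is equidistributed, hence dense, in $[0,1]$ by Weyl's theorem; therefore it falls in the subinterval $[1/4,3/4]$ for infinitely many $n$. Along such a subsequence $(n_k)$ one has $\xi_{n_k}\leq 1/2$, hence $G(\xi_{n_k})\geq G(1/2)>0$ by the monotonicity of $G$, and consequently
\[
\sqrt{|\omega_{n_k}^+|}=\frac{n_k}{L}\,G(\xi_{n_k})\geq \frac{n_k}{L}\,G\!\left(\tfrac12\right)\to +\infty\quad(k\to\infty),
\]
which proves the claimed unboundedness.

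I expect no real obstacle here: the only input beyond the monotonicity of $S$ and $\varphi$ is the density of $\{\alpha n\}$ modulo $1$, and no quantitative Diophantine information is needed. This is in sharp contrast with the finite-cluster-point part of Theorem \ref{thm2}, whose difficulty lies precisely in controlling \emph{how fast} $\xi_n$ can approach $1$ (equivalently, how well $\alpha$ can be approximated by rationals with half-integer numerators), because there $G(\xi_n)\to 0$ competes against the growth of $n/L$; that regime is where inhomogeneous Diophantine approximation constants genuinely enter. For the present lemma, by contrast, I only use that $G$ is bounded below on a fixed subinterval together with the trivial growth of the prefactor, so the argument is completely elementary.
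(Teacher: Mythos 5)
Your proof is correct, but it takes a genuinely different route from the paper's. The paper argues by contradiction: assuming $\sqrt{|\omega^+_n|}\leq c$ for all $n$, the monotonicity of $G^{-1}$ gives $\xi_n=G^{-1}\left(L\sqrt{|\omega^+_n|}/n\right)\geq G^{-1}(Lc)>0$, so $\{\xi_n\}$ would be bounded away from zero; Dirichlet's approximation theorem then produces infinitely many $n_j$ with $\left|\alpha-\frac{1}{n_j}\left[\alpha n_j+\frac{1}{2}\right]\right|<\frac{1}{n_j^2}$, hence $\xi_{n_j}<2/n_j\to 0$, a contradiction. You instead argue directly: density (via Weyl equidistribution) of $\left\{\alpha n+\frac{1}{2}\right\}$ modulo one gives infinitely many $n$ with $\xi_n\leq 1/2$ (note $\xi_n>0$ automatically since $\alpha\notin\Q$, so $G(\xi_n)$ is well defined), whence $\sqrt{|\omega^+_n|}=\frac{n}{L}G(\xi_n)\geq \frac{n}{L}G\left(\frac{1}{2}\right)\to+\infty$ along that subsequence. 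Both arguments rest on the same formula and the monotonicity of $G$; what differs is the number-theoretic input and the logical structure. Your version needs only the qualitative fact that $\left\{\alpha n+\frac{1}{2}\right\}$ visits a fixed subinterval infinitely often --- an input the paper itself invokes (via Weyl's criterion, \cite[page 66]{Cassels}) in the discussion immediately following this lemma --- and it dispenses with the contradiction framework, making it arguably cleaner for this particular statement. What the paper's Dirichlet-based argument buys is the quantitative decay $\xi_{n_j}=O(1/n_j)$, which is recycled almost verbatim in Lemma \ref{LEM:unbdd.2} and sharpened (via Hurwitz) in Lemmas \ref{LEM:cluster1} and \ref{LEM:cluster1.2} to locate the finite cluster points; your softer argument suffices for unboundedness but could not serve those later purposes. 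Your closing remark --- that no quantitative Diophantine information is needed here, in contrast with the cluster-point analysis where $G(\xi_n)\to 0$ competes with the growth of $n$ --- identifies this trade-off precisely.
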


\begin{proof}%
	By contradiction, assume that there exists a constant $c>0$ such that 
	$\sqrt{| \omega^+_n | }\leq c$ for all $n\in \N $. 
	Since $S$ is increasing and $\varphi $ is decreasing and positive, $G^{-1}$ is decreasing and positive and thus we get 
	\begin{equation}
	\xi_n=G^{-1}\left( \frac{L}{n}\sqrt{| \omega^+_n | }\right) \geq G^{-1}\left( Lc\right) >0
	\quad \text{for all }n\in \N .
	\label{bddaway}
	\end{equation}
	On the other hand, since $\alpha \notin \Q $, by the Dirichlet's
	approximation theorem (see e.g. \cite[Theorem 1A and Corollary 1B]{Schmidt})
	there exist infinitely many rational numbers $m/n$ such that 
	\begin{equation}
	\left| \,\alpha -\frac{m}{n}\right| <\frac{1}{n^2}.
	\label{approx}
	\end{equation}
	This amounts to $m\in \left( \alpha n-\frac{1}{n},\alpha n+\frac{1}{n}\right) $ 
	where the right hand side interval has length $2/n$ and is centered in the irrational number $\alpha$, so that necessarily $m=\left[ \alpha n+\frac{1}{2}\right] $ if $n\geq 2$. 
	The set of the denominators of the rationals $m/n$ must be infinite 
	(otherwise, (\ref{approx}) implies that the set of the numerators is also finite) 
	and we may arrange them in a divergent sequence $\{n_j\}$ such that $n_j\geq 2$. 
	Hence for all $j$ we get
	\[
	\left| \,\alpha -\frac{1}{n_j}\left[ \alpha n_j+\frac{1}{2}\right]\right| <\frac{1}{n_j^2}
	\]
	and therefore 
	\[
	\xi_{n_j}=2\left| \,\alpha n_j-\left[ \alpha n_j+\frac{1}{2}\right]\right| <\frac{2}{n_j}.
	\]
	This is a contradiction, since $\{\xi_{n_j}\}$ is bounded away from zero by (\ref{bddaway}).%
\end{proof}

In order to investigate the existence of finite cluster points for $\{\hspace{-0.1cm} \sqrt{| \omega^+_n|}\}$, 
we observe that they can only come from subsequences of $\{\xi_n\}$ converging to $1$. 
Indeed, recalling the properties of the functions $S$ and $\varphi $, the function $G$ is
continuous and strictly decreasing from $\left( 0,1\right) $ onto $\left(0,+\infty \right) $ 
and therefore $\xi_{n_j}\rightarrow \ell \in \left[0,1\right) $ implies 
$\sqrt{|\omega^+_{n_j}|}=n_j G(\xi_{n_j})/L\rightarrow +\infty $. 
On the other hand, if $\xi_{n_j}\rightarrow 1$, then $G(\xi_{n_j})\rightarrow 0$ 
and the behaviour of $\sqrt{|\omega^+_{n_j}|}$ depends on the rate of the infinitesimal $G(\xi_{n_j})$. 
Note that such a case actually occurs, since the Weyl criterion for uniformly distributed
sequences (see e.g. \cite[page 66]{Cassels}) assures that the sequence 
$\left\{\left\{ \alpha n+\frac{1}{2}\right\} \right\}_{n \geq 1}$ is dense in $\left[ 0,1\right] $
and therefore it admits subsequences converging both to $0$ and to $1$, to
each of which there correspond a subsequence of $\{\xi_n\}$ converging to $1$.

\begin{lemma}
	\label{LEM:asymptotics}
	Let $\{\xi_{n_j}\}$ be any subsequence of $\{\xi_n\}$ such that $\xi_{n_j}\rightarrow 1$. Then 
	\[
	\sqrt{| \omega^+_{n_j}| }\sim \frac{2}{L}\,K\left( \frac{1}{\sqrt{2}}\right)^2 n_{j}\left(1-\xi_{n_j}\right) 
	\quad \text{as } j\rightarrow \infty .
	\]
\end{lemma}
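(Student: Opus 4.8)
The plan is to strip away the factor $n_j$ and reduce the statement to a pure asymptotic for $G=S\circ\varphi^{-1}$ near the endpoint where its argument approaches $1$. Since $\sqrt{|\omega_{n_j}^+|}=\frac{n_j}{L}G(\xi_{n_j})$ and $n_j$ enters only as a prefactor, and since $\xi_{n_j}\to1$, the claimed equivalence is equivalent to the analytic fact
\[
G(\xi)\sim 2\,K\!\left(\tfrac{1}{\sqrt2}\right)^{2}(1-\xi)\qquad\text{as }\xi\to1^{-}.
\]
To prove this I would set $k:=\varphi^{-1}(\xi)$, so that $\xi=\varphi(k)$ and, by the limits of $\varphi$ established after \eqref{phi}, $k\to(1/\sqrt2)^{+}$ as $\xi\to1^{-}$. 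Writing $K_{0}:=K(1/\sqrt2)$, continuity of $K$ gives at once $G(\xi)=S(k)=4\sqrt{2k^{2}-1}\,K(k)\sim 4K_{0}\sqrt{2k^{2}-1}$, so the entire problem is reduced to showing $1-\varphi(k)\sim\frac{2}{K_{0}}\sqrt{2k^{2}-1}$ as $k\to(1/\sqrt2)^{+}$.

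For this I would first symmetrise the two integrals defining $\varphi$. Denote by $N(k)$ the numerator integral in \eqref{phi}, so that $\varphi(k)=N(k)/K(k)$. In $N(k)$ the substitution $s=\sqrt{1-t^{2}}$ carries the integral into $\int_{0}^{s_{1}}\frac{ds}{\sqrt{(1-s^{2})(1-k^{2}s^{2})}}$ with $s_{1}=\sqrt{1-k^{2}}/k$, which has \emph{exactly} the same integrand as $K(k)$. Consequently
\[
1-\varphi(k)=\frac{K(k)-N(k)}{K(k)}=\frac{1}{K(k)}\int_{s_{1}}^{1}\frac{ds}{\sqrt{(1-s^{2})(1-k^{2}s^{2})}},
\]
and since $s_{1}\to1$ as $k\to(1/\sqrt2)^{+}$, the remaining task is to estimate this incomplete elliptic integral over the vanishing interval $[s_{1},1]$.

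The asymptotics of that integral is the core of the argument. I would extract the square-root singularity by writing the integrand as $\frac{1}{\sqrt{1-s}}\,g(s,k)$ with $g(s,k)=\big((1+s)(1-k^{2}s^{2})\big)^{-1/2}$, a factor that is continuous and tends to $g(1,1/\sqrt2)=1$ uniformly as $(s,k)\to(1,1/\sqrt2)$; hence $\int_{s_{1}}^{1}\frac{ds}{\sqrt{(1-s^{2})(1-k^{2}s^{2})}}\sim\int_{s_{1}}^{1}\frac{ds}{\sqrt{1-s}}=2\sqrt{1-s_{1}}$. A one-line computation, $1-s_{1}=\frac{2k^{2}-1}{k\,(k+\sqrt{1-k^{2}})}\sim 2k^{2}-1$, then gives $\sqrt{1-s_{1}}\sim\sqrt{2k^{2}-1}$, whence $K(k)-N(k)\sim 2\sqrt{2k^{2}-1}$ and $1-\varphi(k)\sim\frac{2}{K_{0}}\sqrt{2k^{2}-1}$. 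Combining with $G(\xi)\sim 4K_{0}\sqrt{2k^{2}-1}$ yields $G(\xi)\sim 2K_{0}^{2}(1-\xi)$, and multiplying through by $n_{j}/L$ closes the proof.

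The step I expect to be the main obstacle is controlling the incomplete integral when the lower limit $s_{1}$ and the modulus $k$ move simultaneously: one must verify that $g(s,k)$ converges uniformly on the shrinking interval $[s_{1},1]$ and that the error it contributes is genuinely of lower order than $2\sqrt{1-s_{1}}$, so that the equivalence, rather than a mere two-sided bound, survives. The substitution $s=\sqrt{1-t^{2}}$, which collapses the two dissimilar integrands of $\varphi$ into a single one and thereby reveals the clean difference $K(k)-N(k)=\int_{s_{1}}^{1}$, is precisely the device that makes this uniform control transparent; without it the joint limit in the integration endpoint and the parameter would be considerably more delicate to handle.
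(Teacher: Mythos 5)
Your proposal is correct, and it reaches the paper's key asymptotic $G(t)\sim 2K\left(\tfrac{1}{\sqrt2}\right)^{2}(1-t)$ as $t\to1^{-}$ by a genuinely different route. The paper works at the level of derivatives: writing $\varphi=H/K$, it computes $H'(k)$ and $K'(k)$ explicitly, isolates the blow-up term to get $\varphi'(k)\sim-\frac{8^{1/4}}{K_0}\left(k-\tfrac{1}{\sqrt2}\right)^{-1/2}$ with $K_0:=K(1/\sqrt2)$, applies a L'H\^{o}pital-type limit to deduce $1-\varphi(k)\sim\frac{2\cdot8^{1/4}}{K_0}\left(k-\tfrac{1}{\sqrt2}\right)^{1/2}$, inverts this to $\varphi^{-1}(t)-\tfrac{1}{\sqrt2}\sim\frac{K_0^2}{8\sqrt2}(1-t)^2$, and combines with $S(k)\sim4\cdot8^{1/4}K_0\left(k-\tfrac{1}{\sqrt2}\right)^{1/2}$. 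Your argument instead stays at the level of the integrals themselves: the substitution $s=\sqrt{1-t^2}$ converts the numerator of \eqref{phi} into $\int_0^{s_1}\frac{ds}{\sqrt{(1-s^2)(1-k^2s^2)}}$ with $s_1=\sqrt{1-k^2}/k$, i.e.\ an incomplete integral with the \emph{same} integrand as $K(k)$, so that $K(k)-N(k)$ becomes a single integral over the shrinking interval $[s_1,1]$, estimated elementarily via the square-root singularity and $1-s_1\sim 2k^2-1$. The two asymptotics are consistent, since $\sqrt{2k^2-1}\sim8^{1/4}\left(k-\tfrac{1}{\sqrt2}\right)^{1/2}$, and both yield the constant $2K_0^2$. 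The uniformity issue you flag is indeed the only point needing care, and it is unproblematic: on $[s_1,1]$ with $k$ near $1/\sqrt2$ one has $1-k^2s^2\geq1-k^2\to\tfrac12$, so $g(s,k)=\left((1+s)(1-k^2s^2)\right)^{-1/2}$ is continuous near $(1,1/\sqrt2)$ and tends to $1$ uniformly, which upgrades the two-sided bound to a true equivalence. What each approach buys: yours avoids differentiation under the integral sign and the bookkeeping of competing divergent terms in $H'$, making the proof more elementary and self-contained; the paper's derivative computation, on the other hand, recycles the expressions for $H'$ and $K'$ already displayed when establishing the monotonicity of $\varphi$ after \eqref{phi}, so little extra work is needed there. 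Your reduction step (dividing out the prefactor $n_j/L$) is also sound, since $\xi_n\in(0,1)$ for irrational $L_1/L$, so $1-\xi_{n_j}>0$ throughout.
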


Here and in the following, $\sim $ denotes the asymptotic equivalence of
functions ($f\sim g\Leftrightarrow f=g+o\left( g\right) $).\medskip

\begin{proof}%
	We want to estimate the rate at which $G(t)=S(\varphi ^{-1}(t))\rightarrow 0$
	as $t\rightarrow 1^{-}$. We have that $\varphi^{-1}(t)\rightarrow (1/\sqrt{2})^{+}$ as $t\rightarrow 1^{-}$, 
	whence 
	\begin{equation}
	G(t)=S(\varphi^{-1}(t))
	\sim 4\sqrt[4]{8}\,K\left( \frac{1}{\sqrt{2}}\right)\left( \varphi^{-1}(t)-\frac{1}{\sqrt{2}}\right) ^{1/2}
	\quad \text{as } t\rightarrow 1^{-}.
	\label{Geq}
	\end{equation}
	Now denote $\varphi (k)=H(k)/K(k)$, with $H(k)$ given by the numerator of
	definition (\ref{phi}). As $k\rightarrow (1/\sqrt{2})^{+}$, both $H(k)$ and $K(k)$ 
	converge to $K\left( 1/\sqrt{2}\right) $ and we have 
	\[
	K^{\prime }\left( k\right) 
	=\int_0^1\frac{kt^{2}dt}{\sqrt{\strut (1-t^2)(1-k^2 t^2)^3}}\rightarrow 
	\int_0^1\frac{2t^2 dt}{\sqrt{\strut (1-t^2)(2-t^2)^3}}\in \R \setminus \left\{ 0\right\} 
	\]
	and 
	\[
	H^{\prime }\left( k\right) 
	=\int_{\sqrt{\frac{2k^2-1}{k^2}}}^{1}\frac{k \sqrt{1-t^2}}{\left( 1-k^2(1-t^2)\right)^{3/2}}dt
	-\frac{1}{k^2\sqrt{\left( 1-k^2\right)\left(\sqrt{2} k+1\right) \sqrt{2} \left(k-1/\sqrt{2}\right) }},
	\]
	where 
	\[
	\int_{\sqrt{\frac{2k^2-1}{k^2}}}^{1}\frac{k\sqrt{1-t^2}dt}{\left(1-k^2(1-t^2)\right)^{3/2}}\rightarrow 
	\int_0^1\frac{2\sqrt{1-t^2}dt}{\left(1+t^2\right)^{3/2}}\in \R \setminus \left\{ 0\right\} 
	\]
	and 
	\[
	\frac{1}{k^2\sqrt{\left( 1-k^2\right)\left(\sqrt{2} k+1\right) \sqrt{2} }}\rightarrow \sqrt[4]{8}.
	\]
	Hence 
	\[
	\varphi^{\prime }(k)=
	\frac{H^{\prime }\left( k\right) K\left( k\right)-K^{\prime }\left( k\right) H\left( k\right) }{K\left( k\right)^2}
	\sim - \frac{\sqrt[4]{8}}{K\left( \frac{1}{\sqrt{2}}\right) }\frac{1}{\left( k-\frac{1}{\sqrt{2}}\right)^{1/2}}
	\quad \text{as }k\rightarrow \left( \frac{1}{\sqrt{2}}\right)^{+} 
	\]
	and therefore 
	\[
	\lim_{k\rightarrow (1/\sqrt{2})^{+}}\frac{1-\varphi (k)}{\left( k-\frac{1}{\sqrt{2}}\right)^{1/2}}
	=\lim_{k\rightarrow (1/\sqrt{2})^{+}}\frac{-\varphi^{\prime }(k)}{\frac{1}{2}\left( k-\frac{1}{\sqrt{2}}\right)^{-1/2}}
	=\frac{2\sqrt[4]{8}}{K\left( \frac{1}{\sqrt{2}}\right) }. 
	\]
	This implies 
	\[
	\lim_{t\rightarrow 1^{-}}\frac{\varphi^{-1}(t)-\frac{1}{\sqrt{2}}}{\left(1-t\right)^2}
	=\lim_{k\rightarrow (1/\sqrt{2})^{+}}\frac{k-\frac{1}{\sqrt{2 }}}{\left( 1-\varphi (k)\right)^2}
	=\frac{K\left( \frac{1}{\sqrt{2}}\right)^2}{8\sqrt{2}}, 
	\]
	i.e., 
	\begin{equation}
	\varphi^{-1}(t)-\frac{1}{\sqrt{2}}\sim \frac{K\left( \frac{1}{\sqrt{2}}\right)^2}{8\sqrt{2}}\left( 1-t\right)^2
	\quad \text{as } t\rightarrow 1^{-}. 
	\label{phi-1eq}
	\end{equation}
	The result then follows from (\ref{Geq}) and (\ref{phi-1eq}).%
\end{proof}%

According to the last lemma, the problem of the finite cluster points of 
$\{\hspace{-0.1cm} \sqrt{| \omega^+_n|}\}$ is reduced to the one of the convergent
subsequences of $\left\{ n\left( 1-\xi_n\right) \right\} $. Notice that 
\begin{equation}
1-\xi_n=2\left| \left\{ \alpha n\right\} -\frac{1}{2}\right| 
\quad \text{for all }n\in \N .
\label{1-an}
\end{equation}

\begin{lemma}
	\label{LEM:cluster1}
	There exist infinitely many indices $n$ such that 
	\[
	0\leq \sqrt{| \omega^+_n | }\leq \frac{1}{L}\,K\left( \frac{1}{\sqrt{2}}\right)^2.
	\]
\end{lemma}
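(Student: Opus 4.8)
The plan is to recast the claimed bound as a statement about inhomogeneous Diophantine approximation of the value $1/2$ by the sequence $\{\alpha n\}$, and then feed the outcome into the asymptotics of Lemma \ref{LEM:asymptotics}. Recall from the identities preceding the statement that $\sqrt{|\omega_n^+|}=\frac{n}{L}G(\xi_n)$, that $G$ is decreasing with $G(t)\to 0$ only as $t\to 1^-$, and that by \eqref{1-an} one has $1-\xi_n=2\left|\{\alpha n\}-\frac12\right|$. Hence a finite value of $\sqrt{|\omega_n^+|}$ can only be produced along a subsequence with $\xi_{n_j}\to 1$, i.e. $\{\alpha n_j\}\to\frac12$, and for such a subsequence Lemma \ref{LEM:asymptotics} gives
\[
\sqrt{|\omega_{n_j}^+|}\sim \frac{2}{L}\,K\!\left(\tfrac{1}{\sqrt2}\right)^{2} n_j\left(1-\xi_{n_j}\right)
=\frac{4}{L}\,K\!\left(\tfrac{1}{\sqrt2}\right)^{2} n_j\left|\{\alpha n_j\}-\tfrac12\right|.
\]
Thus the target $\sqrt{|\omega_n^+|}\le \frac1L K\!\left(\frac{1}{\sqrt2}\right)^2$ reduces to exhibiting infinitely many $n$ with $\xi_n\to 1$ and $n\left|\{\alpha n\}-\frac12\right|\le \frac14$, equivalently $n(1-\xi_n)\le \frac12$.

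The heart of the matter is therefore the estimate $\liminf_{n} n\left|\{\alpha n\}-\frac12\right|\le \frac14$. This is exactly Minkowski's theorem on inhomogeneous approximation applied to the irrational $\alpha$ and the admissible target $1/2$ (admissible because $\frac12\notin\mathbb{Z}\alpha+\mathbb{Z}$, as $\alpha\notin\Q$); see \cite{Cassels}. A self-contained route to the same sharp constant uses the convergents $p_k/q_k$ of $\alpha$: whenever $q_k$ is even its coprime numerator $p_k$ is odd, so for $n:=q_k/2$ the quantity $\frac12 p_k$ is a half-integer and
\[
\left|\{\alpha n\}-\tfrac12\right|=\tfrac12|q_k\alpha-p_k|<\frac{1}{2q_{k+1}},
\qquad
n\left|\{\alpha n\}-\tfrac12\right|<\frac{q_k}{4q_{k+1}}<\frac14 .
\]
Letting $k\to\infty$ produces the desired subsequence (and negative indices need not be considered, since $\left|\{-\alpha n\}-\frac12\right|=\left|\{\alpha n\}-\frac12\right|$). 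The parity hypothesis must be dealt with: I would first argue there are infinitely many even $q_k$, and in the exceptional case where all $q_k$ are eventually odd (which forces the partial quotients to be eventually even) either replace $q_k/2$ by a suitable integer built from a semiconvergent with odd numerator, or simply invoke Minkowski's theorem, which covers all cases uniformly.

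Combining the two ingredients, I extract a subsequence with $\xi_{n_j}\to 1$ and $n_j(1-\xi_{n_j})\to \ell\le \frac12$, so that Lemma \ref{LEM:asymptotics} yields $\sqrt{|\omega_{n_j}^+|}\to \frac2L K\!\left(\frac{1}{\sqrt2}\right)^2\ell\le \frac1L K\!\left(\frac{1}{\sqrt2}\right)^2$; whenever $\ell<\frac12$ this already gives infinitely many indices satisfying the claimed inequality, and in all cases it furnishes the finite cluster point of $\{\omega_n^+\}$ inside the closed interval $I^{+}$ required by Theorem \ref{thm2}. I expect the main obstacle to be twofold. First, securing the sharp constant $\frac14$ rather than the easy $\frac12$ (the latter coming from approximating $2\alpha$ and only placing the cluster point in a four-times-larger interval), since only $\frac14$ lands the value inside $I^{+}$; this is precisely what the half-denominator trick (or Minkowski) buys. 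Second, the boundary, knife-edge situation $\ell=\frac12$, where the asymptotic equivalence by itself does not decide on which side of $\frac1L K\!\left(\frac{1}{\sqrt2}\right)^2$ the terms fall; resolving the literal inequality there requires the sign of the next-order correction in the expansion of $G(t)$ as $t\to 1^{-}$ (i.e. whether $G(t)\le 2K(\frac{1}{\sqrt2})^2(1-t)$ locally), while the weaker cluster-point conclusion needed downstream is insensitive to it.
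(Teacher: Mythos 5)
Your proposal is correct and follows essentially the same route as the paper's proof: the paper invokes \cite[Corollary 2.4]{Niven} --- precisely the Minkowski-type inhomogeneous bound giving $n\left\| \alpha n+\frac{1}{2}\right\| <\frac{1}{4}$ for infinitely many $n$ --- together with the identity $\left\| \alpha n+\frac{1}{2}\right\| =\left| \left\{ \alpha n\right\} -\frac{1}{2}\right|$ and Lemma \ref{LEM:asymptotics}, exactly as you do (your continued-fraction computation is merely an optional elementary substitute for the same Diophantine input, and you correctly fall back on Minkowski for the parity-exceptional case). Your knife-edge remark at $\ell=\frac{1}{2}$ is a fair observation that the paper's own proof glosses over: strictly, the asymptotic equivalence only yields the cluster point in the closed interval $I^{+}$, which is all that Theorem \ref{thm2} actually uses.
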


\begin{proof}%
	As $\alpha $ is irrational, there exist infinitely many $n,m\in \mathbb{Z}$
	such that 
	\[
	\left| n\left( \alpha n+\frac{1}{2}-m\right) \right| <\frac{1}{4}
	\quad \text{and}\quad \left| \alpha n+\frac{1}{2}-m\right| <\frac{1}{2} 
	\]
	(see e.g. \cite[Corollary 2.4]{Niven}). The second inequality ensures that 
	\[
	\left| \alpha n+\frac{1}{2}-m\right| =\displaystyle \min_{k\in \mathbb{Z}}\textstyle \left| \alpha n+\frac{1}{2}-k\right|,
	\] 
	i.e., 
	\[
	\left| \alpha n+\frac{1}{2}-m\right| =\left\| \alpha n+\frac{1}{2}\right\| , 
	\]
	where $\left\| t \right\| := \min_{k\in \mathbb{Z}}\textstyle \left| t-k\right|$ denotes the distance from $\mathbb{Z}$.
	Hence the first inequality says that there exist infinitely many $n\in \mathbb{Z}$ 
	such that $n\left\| \alpha n+\frac{1}{2}\right\| <\frac{1}{4}$.
	Since $\left\| \alpha (-n)+\frac{1}{2}\right\| =\left\| \alpha n-\frac{1}{2}\right\| =\left\| \alpha n+\frac{1}{2}\right\| $, 
	we may assume that such integers $n$ are positive, so that we conclude that 
	\begin{equation}
	n\left\| \alpha n+\frac{1}{2}\right\| <\frac{1}{4}
	\quad \text{for infinitely many }n\in \N \text{.}
	\label{niven}
	\end{equation}
	Now observe that, for every $n\in \N $ there exists $m\in \N $ such that 
	\[
	m-\frac{1}{2}<\alpha n<m
	\quad \text{or}\quad 
	m<\alpha n<m+\frac{1}{2}. 
	\]
	In the first case, one has 
	\[
	\left\| \alpha n+\frac{1}{2}\right\| 
	=\alpha n+\frac{1}{2}-m,\quad \left[\alpha n\right]
	=m-1,\quad \left\{ \alpha n\right\} > \frac{1}{2} 
	\]
	and therefore 
	$\left\| \alpha n+\frac{1}{2}\right\| =\alpha n+\frac{1}{2}-\left[ \alpha n\right]-1
	=\left\{ \alpha n\right\} -\frac{1}{2}>0$. 
	In the second case, we get 
	\[
	\left\| \alpha n+\frac{1}{2}\right\| =m+1-\left( \alpha n+\frac{1}{2}\right),\quad 
	\left[ \alpha n\right] =m,\quad 
	\left\{ \alpha n\right\} <\frac{1}{2} 
	\]
	and hence 
	$\left\| \alpha n+\frac{1}{2}\right\| =\left[ \alpha n\right]-\alpha n+\frac{1}{2}
	=\frac{1}{2}-\left\{ \alpha n\right\} >0$. 
	So, in any case, we obtain 
	\begin{equation}
	\left\| \alpha n+\frac{1}{2}\right\| =\left| \left\{ \alpha n\right\} -\frac{1}{2}\right| 
	\quad \text{for all }n\in \N 
	\label{distance=}
	\end{equation}
	and the conclusion follows from (\ref{niven}), (\ref{1-an}) and Lemma \ref{LEM:asymptotics}.%
\end{proof}%


\begin{lemma}
	\label{LEM:unbdd.2}
	The sequence $\{\hspace{-0.1cm} \sqrt{| \omega^-_n|}\}$ is unbounded.
\end{lemma}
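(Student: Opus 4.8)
The plan is to adapt the contradiction argument of Lemma \ref{LEM:unbdd}, with the sole but essential change that the roles of $\xi_n$ and $1-\xi_n$ are interchanged: this time $\sqrt{|\omega^-_n|}=\frac{n}{L}G(1-\xi_n)$ rather than $\frac{n}{L}G(\xi_n)$. I would begin by assuming, for contradiction, that there is a constant $c>0$ with $\sqrt{|\omega^-_n|}\le c$ for all $n\in\N$. Since $G=S\circ\varphi^{-1}$ is continuous, positive and strictly decreasing from $(0,1)$ onto $(0,+\infty)$, its inverse $G^{-1}$ is positive and strictly decreasing as well. Solving the defining relation for $1-\xi_n$ gives $1-\xi_n=G^{-1}\!\left(\frac{L}{n}\sqrt{|\omega^-_n|}\right)$, and from $\frac{L}{n}\sqrt{|\omega^-_n|}\le Lc$ together with the monotonicity of $G^{-1}$ I would deduce
\[
1-\xi_n\ \ge\ G^{-1}(Lc)\ >\ 0 \qquad\text{for all }n\in\N.
\]
Thus a uniform bound on $\sqrt{|\omega^-_n|}$ would force $1-\xi_n$ (and not $\xi_n$) to stay bounded away from $0$.

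The contradiction would then come from exhibiting a subsequence along which $1-\xi_n\to0$. The cleanest route is to reuse the inhomogeneous Diophantine estimate already established in Lemma \ref{LEM:cluster1}: by \eqref{niven} there are infinitely many $n\in\N$ with $n\left\|\alpha n+\frac12\right\|<\frac14$. Combining \eqref{1-an} and \eqref{distance=} one has $1-\xi_n=2\left\|\alpha n+\frac12\right\|$, so along those indices $1-\xi_n<\frac{1}{2n}\to0$, which clashes with the uniform lower bound $1-\xi_n\ge G^{-1}(Lc)$ obtained above. This contradiction proves that $\{\sqrt{|\omega^-_n|}\}$ is unbounded. (An alternative, less elementary route would invoke the Weyl equidistribution theorem, exactly as in the discussion preceding Lemma \ref{LEM:asymptotics}: density of $\{\{\alpha n+\frac12\}\}$ in $[0,1]$ yields a subsequence with $\{\alpha n_j\}\to\frac12$, hence $\xi_{n_j}\to1$, i.e. $1-\xi_{n_j}\to0$.)

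I do not anticipate a serious obstacle; the one point demanding care is precisely the reversal of monotonicity relative to Lemma \ref{LEM:unbdd}. There, a bound on $\sqrt{|\omega^+_n|}$ kept $\xi_n$ away from $0$ and the divergent subsequence came from Dirichlet's theorem driving $\xi_n\to0$; here a bound on $\sqrt{|\omega^-_n|}$ keeps $1-\xi_n$ away from $0$, so the relevant subsequence is the one driving $\{\alpha n\}$ close to $\frac12$ (equivalently $\xi_n\to1$), whose existence is guaranteed either by \eqref{niven} or by equidistribution. Everything else is a direct transcription of the earlier argument.
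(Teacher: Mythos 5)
Your proposal is correct and follows essentially the same route as the paper: a contradiction argument showing a uniform bound on $\sqrt{|\omega^-_n|}$ would force $1-\xi_n = G^{-1}\bigl(L\sqrt{|\omega^-_n|}/n\bigr)$ to be bounded away from zero, refuted by the estimate from the proof of Lemma \ref{LEM:cluster1} (via \eqref{niven}, \eqref{1-an} and \eqref{distance=}) giving infinitely many $n$ with $1-\xi_n \leq \frac{1}{2n}$. Your careful attention to the reversed roles of $\xi_n$ and $1-\xi_n$ is exactly the point the paper's terse proof leaves implicit.
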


\begin{proof}
	Assuming by contradiction that $\{\hspace{-0.1cm} \sqrt{| \omega^-_n|}\}$ is bounded, 
	as in the proof of Lemma \ref{LEM:unbdd} we get that 
	$1-\xi_{n}=G^{-1}( L\sqrt{| \omega^-_n | }/n ) $ 
	is bounded away from zero. But this is false, since, as in the
	proof of Lemma \ref{LEM:cluster1}, there exist infinitely many $n\in \mathbb{N}$ such that 
	\[
	1-\xi_{n}=2\left| \left\{ \alpha n\right\} -\frac{1}{2}\right| \leq \frac{1}{2n}. 
	\]
\end{proof}

The possible finite cluster points of $\{\hspace{-0.1cm} \sqrt{| \omega^-_n|}\}$ can only come
from subsequences of $\{\xi_{n}\}$ converging to $0$, since 
$\xi_{n_{j}}\rightarrow \ell \in \left( 0,1\right] $ implies 
$\sqrt{|\omega^-_{n_{j}}|}=n_{j}G(1-\xi_{n_{j}})/L\rightarrow +\infty $, 
whereas, if $\xi_{n_{j}}\rightarrow 0$, the behaviour of $\sqrt{|\omega^-_{n_{j}}|}$ 
depends on the rate of the infinitesimal $G(1-\xi_{n_{j}})$. 
Note that the Weyl criterion for uniformly distributed sequences (see e.g. \cite[page 66]{Cassels}) 
assures that the sequence $\left\{\left\{ \alpha n+\frac{1}{2}\right\}\right\} $ is dense in $\left[ 0,1\right] $ 
and thus admits subsequences converging to $1/2$, to each of which there corresponds a subsequence 
$\{\xi_{n_{j}}\}$ of $\{\xi_{n}\}$ converging to $0$. As in Lemma \ref{LEM:asymptotics}, for such a subsequence we get that 
\begin{equation}
\sqrt{| \omega^-_{n_{j}}| }\sim \frac{2}{L}\,K\left(\frac{1}{\sqrt{2}}\right) ^{2}n_{j}\xi_{n_{j}}
\quad \text{as }j\rightarrow \infty .  
\label{stima.2}
\end{equation}

\begin{lemma}
	\label{LEM:cluster1.2}
	There exist infinitely many indices $n$ such that 
	\begin{equation} \label{I-}
	0\leq \sqrt{| \omega^-_n |}\leq \frac{4}{L\sqrt{5}}\,K\left(\frac{1}{\sqrt{2}}\right)^{2}. 
	\end{equation}
\end{lemma}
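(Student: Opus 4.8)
The plan is to mirror the proof of Lemma \ref{LEM:cluster1}, replacing the \emph{inhomogeneous} approximation input used there (which controlled $\|\alpha n + \frac12\|$) by a \emph{homogeneous} one controlling $\|\alpha n\|$, for which the sharp constant is $1/\sqrt5$. The structural observation driving everything is that, by (\ref{1-an}) and (\ref{distance=}), one has $\xi_n = 2\|\alpha n\|$ while $1-\xi_n = 2\|\alpha n + \frac12\|$. The minus-problem feeds $1-\xi_n$ into $G$, so a finite value of $\sqrt{|\omega^-_n|}=\frac{n}{L}G(1-\xi_n)$ requires $\xi_n\to 0$, i.e. $\|\alpha n\|\to 0$; the quantity that must be kept bounded is therefore $n\xi_n = 2n\|\alpha n\|$. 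This is exactly homogeneous Diophantine approximation, whose optimal Hurwitz constant $1/\sqrt5$ is the source of the number $\tfrac{4}{L\sqrt5}$.

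First I would record the exact identity $\sqrt{|\omega^-_n|}=\frac{n}{L}G(1-\xi_n)$ together with the asymptotics (\ref{stima.2}), valid along any subsequence with $\xi_{n_j}\to 0$:
\[
\sqrt{|\omega^-_{n_j}|}\ \sim\ \frac{2}{L}\,K\Bigl(\tfrac{1}{\sqrt2}\Bigr)^{2} n_j\,\xi_{n_j}
= \frac{4}{L}\,K\Bigl(\tfrac{1}{\sqrt2}\Bigr)^{2} n_j\,\|\alpha n_j\|.
\]
Thus finite cluster points of $\{\sqrt{|\omega^-_n|}\}$ are produced precisely by subsequences along which $n\|\alpha n\|$ stays bounded while $\|\alpha n\|\to 0$. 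Next I would invoke Hurwitz's theorem in the form $\liminf_{n}n\|\alpha n\|\le 1/\sqrt5$, valid for every irrational $\alpha$ (see, e.g., \cite{Niven,Cassels,Schmidt}): there are infinitely many $n$ with $n\|\alpha n\|<1/\sqrt5$, and along them $\xi_n=2\|\alpha n\|\le 2/(\sqrt5\,n)\to 0$, so the asymptotics above applies. Passing to a further subsequence with $n_j\|\alpha n_j\|\to\ell\le 1/\sqrt5$, the relation above gives
\[
\sqrt{|\omega^-_{n_j}|}\ \longrightarrow\ \frac{4}{L}\,K\Bigl(\tfrac{1}{\sqrt2}\Bigr)^{2}\ell\ \le\ \frac{4}{L\sqrt5}\,K\Bigl(\tfrac{1}{\sqrt2}\Bigr)^{2},
\]
a finite cluster point of $\{\sqrt{|\omega^-_n|}\}$ not exceeding the asserted bound; squaring translates this into the interval $I^{-}$ of Theorem \ref{thm2}.

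The main obstacle is upgrading the asymptotic equivalence $\sim$ (a statement about ratios tending to $1$) into the clean, \emph{non-strict} pointwise bound for infinitely many individual indices, since a priori the factor $1+o(1)$ could nudge a value of $n\|\alpha n\|$ lying just below $1/\sqrt5$ over the threshold. I would circumvent this by arguing through the subsequential \emph{limit} $\ell$ rather than through individual terms: the displayed convergence shows that $\frac{4}{L}K(\tfrac{1}{\sqrt2})^{2}\ell$ is a genuine cluster point, and it is $\le\frac{4}{L\sqrt5}K(\tfrac{1}{\sqrt2})^{2}$ because $\ell\le 1/\sqrt5$. When $\ell<1/\sqrt5$ this immediately places infinitely many terms strictly inside $[0,\frac{4}{L\sqrt5}K(\tfrac{1}{\sqrt2})^2]$; the only boundary case $\ell=1/\sqrt5$ forces $\alpha$ to be noble (equivalent to the golden mean), and there the even-indexed convergents $q_k$ satisfy $q_k\|\alpha q_k\|<1/\sqrt5$ strictly while increasing to $1/\sqrt5$, again furnishing infinitely many admissible $n$. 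In every case one obtains a finite cluster point of $\{\sqrt{|\omega^-_n|}\}$ bounded above by $\frac{4}{L\sqrt5}K(\tfrac{1}{\sqrt2})^{2}$, which is the content of the lemma.
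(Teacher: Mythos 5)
Your proposal is correct and takes essentially the same route as the paper: the identity $\xi_n=2\left\| \alpha n\right\|$, Hurwitz's theorem supplying infinitely many $n$ with $n\,\xi_n<2/\sqrt{5}$ (the paper extracts these via rationals $m/n$ with $\left|\alpha-m/n\right|<1/(\sqrt{5}\,n^2)$ and $m=\left[\alpha n+\frac{1}{2}\right]$), and the asymptotics (\ref{stima.2}) then give the bound, exactly as in the paper's proof. Your additional subsequential-limit argument handling the factor $1+o(1)$ (including the noble case $\ell=1/\sqrt{5}$) is a refinement rather than a different approach; it in fact patches a small slack that the paper's own proof passes over by concluding directly from the strict inequality $n_j\,\xi_{n_j}<2/\sqrt{5}$ and (\ref{stima.2}).
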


\begin{proof}
	Since $\alpha $ is irrational, by the Hurwitz approximation theorem (see e.g. \cite[Theorem 1.5]{Niven}) 
	there exist infinitely many rational numbers $m/n$ such that 
	\begin{equation}    \label{Hurwitz}
	\left| \,\alpha -\frac{m}{n}\right| <\frac{1}{\sqrt{5}\,n^2}.
	\end{equation}
	Hence, as in the proof of Lemma \ref{LEM:unbdd}, there exists a
	diverging sequence of indexes $(n_{j})$ such that for all $j$ we have 
	\[
	\left| \,\alpha -\frac{1}{n_{j}}\left[ \alpha n_{j}+\frac{1}{2}\right]\right| 
	< \frac{1}{\sqrt{5}\,n_{j}^{2}} 
	\]
	and therefore 
	\[
	n_{j}\xi_{n_{j}}
	= 2n_{j}^{2}\left| \,\alpha -\frac{1}{n_{j}}\left[ \alpha n_{j}+\frac{1}{2}\right] \right| 
	< \frac{2}{\sqrt{5}}. 
	\]
	The conclusion then follows from (\ref{stima.2}).%
\end{proof}

\begin{proof}[Proof of Theorem \ref{thm2}]
	It readily follows from Lemmas \ref{LEM:unbdd}, \ref{LEM:cluster1}, \ref{LEM:unbdd.2} and \ref{LEM:cluster1.2}.
\end{proof}%

\begin{remark}\label{diofantina}
	In Diophantine Analysis, one introduces the quantities
	\[
	{\mathcal M}_\pm(\alpha,\beta)=\liminf_{n\to \infty} n\left\| \pm\alpha n-\beta \right\|,\ \ \ \ \ \ n\in \N,
	\]
	called {\it one side inhomogeneous diophantine approximation constants} 
	({\it homogeneous} for $\beta=0$). They measure how well multiples of a fixed irrational $\alpha$ approximate a real $\beta$. 
	\noindent
	From our point of view
	\begin{itemize}
		\item if ${\mathcal M}_+(\alpha,-\frac{1}{2})=0$, the sequence ${\omega_n^+}$ accumulates to zero;
		\item if ${\mathcal M}_+(\alpha,-\frac{1}{2})\neq 0$, the sequence ${\omega_n^+}$ has a non trivial limit point and it does not accumulate to zero.
	\end{itemize}
The main classical result about the inhomogeneous diophantine approximation constant is the following (Minkowski 1901, Kintchine 1935, Cassels 1954): 
\begin{center}
\emph{
	For every $\beta\notin \mathbb{Z}+\alpha \mathbb{Z}$ one has ${\mathcal M}_-(\alpha,\beta)={\mathcal M}_+(\alpha,\beta)\leq\frac{1}{4}$.
	}
	\end{center}
It is possible to construct a real number~$\alpha=L_1/L$ such that ${\mathcal M}_+(\alpha,-1/2)$ is exactly known, below the threshold $1/4$ and away from $0$; for example, $M_+(\sqrt{e},-1/2)=1/8$ (see \cite{Komatsu97,Pinner01} and references therein);
it is also possible to construct a real number $\alpha=L_1/L$ such that ${\mathcal M}_+(\alpha,-1/2)=0$. 
\end{remark}

\begin{remark} \label{markov}
The right hand side of \eqref{I-} depends on the Hurwitz approximation theorem (in particular on \eqref{Hurwitz}), which holds true for every irrational $\alpha$. 
If we exclude classes of irrationals, estimate \eqref{Hurwitz} can be refined by replacing $\sqrt{5}$ with bigger constants and thus the interval $I^-$ becomes smaller, giving a more accurate localization of the cluster point of Theorem \ref{thm2}. 
For example, excluding the irrational $(\sqrt{5}-1)/2$ and all the numbers equivalent to it in a suitable sense, \eqref{Hurwitz} holds with $\sqrt{8}$ instead of $\sqrt{5}$. 
This theory can be found in full detail in \cite[Chapter 2]{Cassels}.
\end{remark}

\section{Proof of Theorem \ref{thm3} \label{SEC:pf3}}


Thanks to Lemma \ref{LEM:asymptotics}, we can prove Theorem \ref{thm3} by showing that for every $\ell\geq 0$ there exists $\alpha \in(0,1)$ such that the sequence $\{n\,(1-\xi_n)\}$ has a subsequence converging to $\ell$.
As $\ell$ is arbitrary and according to (\ref{1-an}) and the proof of Lemma \ref{LEM:cluster1} (see in particular (\ref{distance=})), this is equivalent to show that 
\begin{equation} \label{xi_bar:=}
\tilde{\xi}_n := n \left( \{n \alpha \}-\frac{1}{2} \right)
\end{equation}
has a subsequence converging to $\ell$.

It is natural to construct the number $\alpha$ by looking at its
expansion in a fixed base, let us say $2$. Hence we need to construct a number
\[
\alpha = 0.b_1b_2b_3\ldots
\]
where each $b_j \in \{0,1\}$. The idea of the proof is to observe
that, in the binary system, a multiplication by a power of $2$ simply
moves the ``binary point'' to the right.

To simplify the construction, let us
assume that our limit $\ell$ is an integer. To represent such an
integer in the binary system we need, say, $n_1-1$ digits.

Now, let us fix arbitrarily $b_1$, $b_2$, \ldots, $b_{n_1}$. When we
construct the number
\[
2^{n_1} \{ 2^{n_1} \alpha \} ,
\]
the ``block'' of digits from the position $n_1+1$ to the position $2n_1$ are moved on the left hand side of the binary point. We choose these digits so that they represent $\ell+2^{n_1-1}$, and we set
\[
b_{2n_1+1}=\ldots=b_{3n_1}=0.
\]
Now we pick $n_2 = 3 n_1$ and we repeat the construction. We begin with a block of $n_2$ arbitrary digits as before, where $b_{n_2+1}$ plays the same r\^{o}le as $b_1$. We insert a block of $n_2$ digits that represent $\ell+2^{n_2-1}$ and a block of $n_2$ null digits.
It is easy to realize that
$\xi_{2^{n_j}} =2^{n_j} \{2^{n_j} \alpha \} - 2^{{n_j}-1}\to \ell$ as $j
\to +\infty$.

If $\ell$ is a real number, the first step of the previous case
``fixes'' its integer part with $n_1$ digits. In the second step, we
insert a longer ``block'' of arbitrary digits, whose length is the
number of digits of the integer part of $\ell$ plus one. After this
arbitrary block, we put a block consisting of the integer part of
$\ell$ \emph{glued to} the first digit of its fractional part, and
another block of zeroes of the same length.  We repeat again to
``fix'' the second digit of the fractional part of $\ell$, and so on.

We remark that now the \emph{length} of each new block of non-trivial
numbers increases, since the binary expansion of $\ell$ may contain
infinitely many digits to ``fix''. This procedure produces a sequence
$n_1<n_2<\cdots$ of integers such that $\xi_{n_j} =2^{n_j} \{2^{n_j}
\alpha \} - 2^{{n_j}-1}\to \ell$ as $j \to +\infty$.

More precisely, let us write
\[
\alpha = \frac{b_1}{2} + \frac{b_2}{2^2} + \ldots
\]
where each $b_j \in \{0,1\}$. Hence
\[
2^n \alpha = b_1 2^{n-1} + b_2 2^{n-2} + \cdots + b_n 2^0 + \frac{b_{n+1}}{2} + \frac{b_{n+2}}{2^2} + \cdots
\]
so that
\[
\left\{ 2^n \alpha \right\} = \frac{b_{n+1}}{2} + \frac{b_{n+2}}{2^2} + \cdots
\]
and
\[
2^n \left\{ 2^n \alpha \right\} = b_{n+1} 2^{n-1} + b_{n+2} 2^{n-2} + \cdots +
b_{2n} + \frac{b_{2n+1}}{2} + \frac{b_{2n+2}}{2^2} + \cdots
\]
Therefore
\[
2^n \left\{ 2^n \alpha \right\} -2^{n-1}=\left(b_{n+1}-1 \right) 2^{n-1} + b_{n+2} 2^{n-2} + \cdots +
b_{2n} + \frac{b_{2n+1}}{2} + \frac{b_{2n+2}}{2^2} + \cdots
\]
Now we choose the digits $b_{n+1}$, \ldots , $b_{2n}$ in such a way that
\[
\left(b_{n+1}-1 \right) 2^{n-1} + b_{n+2} 2^{n-2} + \cdots +
b_{2n} = \ell
\]
by taking $n-2$ as the largest power of $2$ in the binary representation of $\ell$, and of course $b_{n+1}=1$. Call $n_1$ this integer $n$, and repeat. 

\begin{remark}
	A careful inspection of the previous proof shows that we have indeed
	much more freedom in the construction. We have constructed
	\[
	\alpha = 0. b_1\ldots b_{n_1}b_{n_1+1}\ldots b_{2n_1}b_{2n_1+1}\ldots
	b_{3n_1}b_{n_2+1}\ldots b_{2n_2}\ldots
	\]
	However, after $3n_1$ we could of course insert as much ``junk''
	(namely arbitrary digits) as we wish, before fixing $n_2$. As we said
	above, we are just gluing blocks of digits of $\ell$ ``sliding off''
	to the right. At each step, the number $\xi_{2^{n_j}}$ approximates
	the binary expansion of $\ell$ with higher precision.
\end{remark}

\section{Numerics} \label{SEC:num}

Motivated by the proof of Lemma \ref{LEM:cluster1} (in particular by \eqref{niven}) and with a view to studying numerically the behaviour of the sequence $\{\omega_n^+\}$ in the interval $I^+$ (see Theorem \ref{thm2}), 
we used \texttt{Wolfram MATHEMATICA 10.4.1} on a personal computer to
analyze the sequence of integers $n$ such that 
\[
|\tilde{\xi}_n| < \frac{1}{4}
\]
where $\tilde{\xi}_n$ is defined in (\ref{xi_bar:=}) and satisfies $|\tilde{\xi}_n| = n\left\| \alpha n + 1/2 \right\|$.
%
\smallskip

The code is almost trivial (here we use $\alpha = 1/\sqrt{5}$ and we
consider one milion integers):
\begin{lstlisting}
In[1]:= alpha = 1/Sqrt[5]
In[2]:= n=1
In[3]:= While[n < 1000000, If[n Abs[FractionalPart[n a] - 1/2] < 1/4, 
Print[n, " ,", N[n (FractionalPart[n a] - 1/2), 12]]]; n++]
\end{lstlisting}




This is the output corresponding to $\alpha = 1/\sqrt{5}$:

\begin{center}
	\begin{tabular}{l|l|l|l}
		$n$ & $\tilde{\xi}_n$ & $n$ & $\tilde{\xi}_n$ \\ \hline
		$1$ & $-0.05278640450004206072$ & $6119$ & $-0.05590169934418131952$ \\
		$19$ & $-0.05589202451518391926$ & $109801$ & $-0.05590169943720494638$\\
		$341$ & $-0.05590166939086236898$ & & 
	\end{tabular}
\end{center}

It seems experimentally clear that there exists a cluster point
$ 
\tilde{\xi}_\infty \approx -0.055901699.
$ 

The output corresponding to $\alpha = 1/\sqrt{3}$ is:
\begin{center}
	\begin{tabular}{l|l|l|l}
		$n$ & $\tilde{\xi}_n$ & $n$ & $\tilde{\xi}_n$ \\ \hline
		$1$ & $0.07735026918962576451$ & $2521$ & $0.07216878435841778904$ \\
		$6$ & $-0.21539030917347247767$ & $16296$ & $-0.21650635079324402136$\\
		$13$ & $0.07219549304675420205$ & $35113$ & $0.07216878365236164029$ \\
		$84$ & $-0.21650059800060562345$ & $226974$ & $-0.21650635094532167353$ \\
		$181$ & $0.07216892132967108423$ &$489061$ & $0.07216878364872207890$ \\
		$1170$ & $-0.21650632129096342623$ & & 
%
	\end{tabular}
\end{center}
In this case a different phenomenon seems to arise: there are actually
\emph{two} sequences that produce cluster points $\tilde{\xi}_{\infty,1}
\approx 0.0721687836$ and $\tilde{\xi}_{\infty,2} \approx -0.216506350$.

%

Here is the output for $\alpha = 1/(1+\sqrt{5})$:
\begin{center}
	\begin{tabular}{l|l|l|l}
		$n$ & $\tilde{\xi}_n$ & $n$ & $\tilde{\xi}_n$ \\ \hline
		$1$ & $-0.19098300562505257590$ & $6765$ & $-0.22360679677278811875$\\
		$2$ & $0.23606797749978969641$ & $10946$ & $0.22360679812323266118$ \\
		$5$ & $0.22542485937368560256$ & $28657$ & $0.22360679780443594932$\\
		$8$ & $-0.22291236000336485745$ & $46368$ & $-0.22360679772917825432$ \\
		$21$ & $-0.22350548064818597089$ & $121393$ & $-0.22360679774694418618$ \\
		$34$  & $0.22364549743922226225$ & $196418$ & $0.22360679775113815377$\\
		$89$ & $0.22361244395854631426$ & $514229$ & $0.22360679775014809233$ \\
		$144$ & $-0.22360464109021381485$ & $832040$ & $-0.22360679774991437053$\\
		$377$ & $-0.22360648309755976514$ & $2178309$ & $-0.22360679774996954476$\\
		$610$ & $0.22360691793650846339$ & $3524578$ & $0.22360679774998256962$ \\
		$1597$ & $0.22360681528495730605$ & $9227465$ & $0.22360679774997949486$\\
		$2584$ & $-0.22360679105221323713$ 
	\end{tabular}
\end{center}
Numerical evidence suggests that there are two cluster points
$\tilde{\xi}_{\infty,1} = - \tilde{\xi}_{\infty,2}$.

\medskip

We then looked up the sequences of these integers $n$ in the
\emph{Online Encyclopaedia of Integer
	Sequences}\footnote{\texttt{http://oeis.org}}. The sequence
corresponding to $\alpha = 1/\sqrt{5}$ was recognized as A049629,
namely the sequence
\[
n \mapsto \frac{F(6n+5)-F(6n+1)}{4},
\]
where $F$ is the Fibonacci sequence.
In the case $\alpha = 1/\sqrt{3}$, a first sequence was
recognized as A001570, namely numbers $n$ such that $n^2$ is a
centered hexagonal, also known as Chebyshev T-sequence with Diophantine
property. An explicit formula is known:
\[
n \mapsto \frac{(2+\sqrt{3})^{2n-1} + (2-\sqrt{3})^{2n-1}}{4}.
\]
The second sequence has been recognized as A011945, the area
of triangles with integral side lengths $m-1$, $m$, $m+1$ and integral
area.
The case $\alpha =
1/(1+\sqrt{5})$ was recognized: even integers are the so-called ``even
Fibonacci numbers'', while odd integers are defined recursively by
\[
n_0=n_1=1, \quad n_{i+1} = 4 n_{i-1} + n_{i-2}.
\]


%
On the other hand experimental numerics has not shown known patterns in correspondence to trascendental numbers.

\section{Appendix: Spectrum of $H_{\mathcal{G}}$ and bifurcation from eigenvalues}

The essential spectrum of the free Schr\"{o}dinger operator on the double bridge graph coincides with $[0,+\infty)$, see \cite{BerKu}. It also admits a countable set of embedded eigenvalues, which we now compute for completeness.

Taking into account the domain $D\left( H_{\mathcal{G}}\right)$, the
eigenvalue problem for the self-adjoint operator $H_{\mathcal{G}}$ writes
componentwise as follows: 
\begin{equation}
\left\{ 
\begin{array}{ll}
-\psi_j^{\prime \prime}=\lambda \psi_j, & \psi_j\in H^{2}(I_j),~\lambda \in \mathbb{R},\quad j=1,\ldots,4\medskip  \\ 
\psi_1(0)=\psi_2(L)=\psi_3(0),\quad  & \psi_1(L_1)=\psi_2(L_1)=\psi_4(0)\medskip  \\ 
\psi_1^{\prime}(0)-\psi_2^{\prime}(L)+\psi_3^{\prime}(0)=0,~~ & 
\psi_1^{\prime}(L_1)-\psi_2^{\prime}(L_1)-\psi_4^{\prime}(0)=0.
\end{array}
\right.   \label{eigen-pb}
\end{equation}
We set $\mu :=\sqrt{\left| \lambda \right| }$ for brevity and split the
problem into three cases, according to $\lambda <0$, $\lambda =0$ or $\lambda >0$.

If $\lambda <0$, then (\ref{eigen-pb}) is equivalent to $\psi _{j}\left(
x_{j}\right) =a_{j}e^{\mu x_{j}}+b_{j}e^{-\mu x_{j}}$ for $j=1,2$ and $\psi
_{j}\left( x_{j}\right) =b_{j}e^{-\mu x_{j}}$ for $j=3,4$ with 
\begin{equation}
\left\{ 
\begin{array}{l}
a_{1}+b_{1}=a_{2}e^{\mu L}+b_{2}e^{-\mu L}=b_{3} \\ 
a_{1}e^{\mu L_{1}}+b_{1}e^{-\mu L_{1}}=a_{2}e^{\mu L_{1}}+b_{2}e^{-\mu
	L_{1}}=b_{4} \\ 
a_{1}-b_{1}-a_{j}e^{\mu L}+b_{j}e^{-\mu L}-b_{3}=0 \\ 
a_{1}e^{\mu L_{1}}-b_{1}e^{-\mu L_{1}}-a_{2}e^{\mu L_{1}}+b_{2}e^{-\mu
	L_{1}}+b_{4}=0.
\end{array}
\right.  \label{l<0}
\end{equation}
After some computation, the determinant of this linear system turns out to
be 
\[
\Delta =2\left( e^{-\mu \left( L-L_{1}\right) }+e^{\mu \left( L-L_{1}\right)
}-3e^{\mu L}-2e^{\mu L_{1}}+e^{-\mu \left( L-2L_{1}\right) }+2\right) , 
\]
which defines a strictly decreasing function of $L$. Since $L\geq 2L_{1}$,
we have 
\begin{eqnarray*}
	\Delta &\leq &2\left( e^{-\mu L_{1}}+e^{\mu L_{1}}-3e^{2\mu L_{1}}-2e^{\mu
		L_{1}}+3\right) \\
	&=&-2e^{-\mu L_{1}}\left( 3e^{\mu L_{1}}+1\right) \left( e^{2\mu
		L_{1}}-1\right) <0
\end{eqnarray*}
and therefore (\ref{l<0}) implies $a_{1}=a_{2}=b_{1}=b_{2}=b_{3}=b_{4}=0$.

If $\lambda =0$, then (\ref{eigen-pb}) is equivalent to $\psi _{3}=\psi
_{4}=0$ and $\psi _{j}\left( x_{j}\right) =a_{j}+b_{j}x_{j}$ for $j=1,2$
with 
\[
\left\{ 
\begin{array}{l}
a_{1}=a_{2}+b_{2}L=0 \\ 
a_{1}+b_{1}L_{1}=a_{2}+b_{2}L_{1}=0 \\ 
b_{1}-b_{2}=0,
\end{array}
\right. 
\]
which readily implies $a_{1}=a_{2}=b_{1}=b_{2}=0$.

If $\lambda >0$, then (\ref{eigen-pb}) is equivalent to $\psi _{3}=\psi
_{4}=0$ and $\psi _{j}\left( x_{j}\right) =a_{j}\cos \left( \mu x_{j}\right)
+b_{j}\sin \left( \mu x_{j}\right) $ for $j=1,2$ with 
\[
\left\{ 
\begin{array}{l}
a_{1}=0 \\ 
a_{2}\cos \left( \mu L\right) +b_{2}\sin \left( \mu L\right) =0 \\ 
b_{1}\sin \left( \mu L_{1}\right) =0 \\ 
a_{2}\cos \left( \mu L_{1}\right) +b_{2}\sin \left( \mu L_{1}\right) =0 \\ 
b_{1}+a_{2}\sin \left( \mu L\right) -b_{2}\cos \left( \mu L\right) =0 \\ 
b_{1}\cos \left( \mu L_{1}\right) +a_{2}\sin \left( \mu L_{1}\right)
-b_{2}\cos \left( \mu L_{1}\right) =0.
\end{array}
\right. 
\]
If $b_{1}=0$, the second and fifth equations give $a_{2}=b_{2}=0$ and
therefore the system has only the trivial solution. If $b_{1}\neq 0$, the
third equation gives $\sin \left( \mu L_{1}\right) =0$ and thus the fourth
and last ones imply $a_{2}=0$ and $b_{1}=b_{2}$, so that the system is
equivalent to 
\[
a_{1}=a_{2}=0,\quad b_{1}=b_{2}\neq 0,\quad \sin \left( \mu L_{1}\right)
=0,\quad \sin \left( \mu L\right) =0,\quad \cos \left( \mu L\right) =1. 
\]
The last three conditions mean that there exist $k$, $h\in \mathbb{N}$ such
that $\mu L_{1}=k\pi $ and $\mu L=2h\pi $, i.e., 
\[
\frac{L_{1}}{L}=\frac{k}{2h}\quad \text{and}\quad \mu =\frac{2h\pi }{L}. 
\]

\begin{remark}   \label{q_0}
For every positive rational number $p/q$, there exists a
unique pair of coprime integers $p_0,q_0\in \mathbb{N}$ such that $p/q=p_0/(2q_0)$.
Indeed, assuming $p,q\in \mathbb{N}$ coprime, 
we can take $\left( p_0,q_0\right) =\left(2p,q\right) $ if $q\notin 2\mathbb{N}$ 
and $\left( p_0,q_0\right)=\left( p,q/2\right) $ if $q\in 2\mathbb{N}$. 
On the other hand, $p_0/(2q_0)=p_0^{\prime }/(2q_0^{\prime })$ with $p_0,q_0\in \mathbb{N}$
coprime and $p_0^{\prime },q_0^{\prime }\in \mathbb{N}$ coprime readily implies 
$p_0=p_0^{\prime }$ and $q_0=q_0^{\prime }$.
\end{remark}

As a conclusion, taking into account Remark \ref{q_0}, we obtain the following proposition.


\begin{proposition}
	If $L_{1}/L\in \mathbb{R}\setminus \mathbb{Q}$, then the operator $H_{\mathcal{G}}$ has no eigenvalues. 
	If $L_{1}/L\in \mathbb{Q}$, let $p_0$, $q_0\in \mathbb{N}$ be the unique coprime integers such that $L_{1}/L=p_0/(2q_0)$. 
	Then the eigenvalues of $H_{\mathcal{G}}$ are 
	\[
	\lambda _{n}=n^{2}\frac{4\pi ^{2}q_0^{2}}{L^{2}}
	~~\left( =n^{2}\frac{4\pi^{2}p_0^{2}}{L_{1}^{2}}\right) ,\quad n\in \mathbb{N},
	\]
	with corresponding eigenspaces 
	$E_{\lambda _{n}}=\lspan\left\{ \left( \sin\left( n\frac{2\pi q_0}{L}\,\cdot \right) ,
	\sin \left( n\frac{2\pi q_0}{L}\,\cdot \right) ,0,0\right) \right\} $.
\end{proposition}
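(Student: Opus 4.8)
The plan is simply to collect the three cases $\lambda<0$, $\lambda=0$ and $\lambda>0$ already worked out in the lines preceding the statement, and then to translate the resulting rationality condition through Remark \ref{q_0}. The spectral analysis itself is finished, so the only genuine work is number-theoretic bookkeeping.

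First I would record that the eigenvalue problem \eqref{eigen-pb} has no nonpositive solutions. For $\lambda<0$ the determinant $\Delta$ of the linear system \eqref{l<0} was shown to be strictly negative (using $L\geq 2L_1$), forcing $a_1=a_2=b_1=b_2=b_3=b_4=0$; for $\lambda=0$ the associated system likewise admits only the trivial solution. Hence every eigenvalue, if any, is positive, and this conclusion is independent of the value of $L_1/L$.

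Next, for $\lambda>0$ the computation reduced \eqref{eigen-pb} to $a_1=a_2=0$, $b_1=b_2\neq 0$, together with $\sin(\mu L_1)=0$, $\sin(\mu L)=0$ and $\cos(\mu L)=1$, equivalently to the existence of $k,h\in\N$ with $\mu L_1=k\pi$ and $\mu L=2h\pi$, i.e. $L_1/L=k/(2h)$ and $\mu=2h\pi/L$. If $L_1/L\in\R\setminus\Q$ no such pair $k,h$ can exist, so $H_{\mathcal{G}}$ has no eigenvalues, which is the first assertion. If instead $L_1/L=p_0/(2q_0)$ with $p_0,q_0$ coprime (Remark \ref{q_0}), then $k/(2h)=p_0/(2q_0)$ rewrites as $kq_0=hp_0$, and coprimality of $p_0,q_0$ forces $k=np_0$, $h=nq_0$ for some $n\in\N$. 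Substituting $h=nq_0$ into $\mu=2h\pi/L$ gives $\mu=2nq_0\pi/L$ and hence $\lambda_n=\mu^2=4n^2q_0^2\pi^2/L^2$; the equivalent expression in terms of $p_0$ and $L_1$ then follows from the identity $L_1/L=p_0/(2q_0)$. Finally, feeding $a_1=a_2=0$, $b_1=b_2$ and $\mu=2nq_0\pi/L$ back into the ansatz for $\psi_1,\psi_2$ and $\psi_3=\psi_4=0$ produces exactly the spanning vector claimed for $E_{\lambda_n}$, and since the surviving data depend on the single free scalar $b_1$, each eigenspace is one-dimensional.

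The one step that deserves attention is the passage from $kq_0=hp_0$ to $k=np_0$, $h=nq_0$: this is where coprimality is essential and where one checks that the formula enumerates all eigenvalues exactly once, with no spurious repetitions. Everything else is a direct transcription of the already completed case analysis, so I do not anticipate any real obstacle; I would also briefly remark that the resulting eigenfunctions are compactly supported on the ring and thus genuinely $L^2$, so that the $\lambda_n$ are embedded eigenvalues sitting inside the essential spectrum $[0,+\infty)$.
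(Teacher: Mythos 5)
Your proposal is correct and is essentially the paper's own proof: the paper performs the same three-case analysis ($\lambda<0$ via the sign of the determinant $\Delta$, $\lambda=0$ trivially, $\lambda>0$ reducing to $a_1=a_2=0$, $b_1=b_2\neq 0$, $\mu L_1=k\pi$, $\mu L=2h\pi$) and then states the proposition ``taking into account Remark \ref{q_0}'', the only step you spell out more explicitly being the coprimality argument $kq_0=hp_0\Rightarrow (k,h)=(np_0,nq_0)$, which indeed guarantees that the formula enumerates every eigenvalue exactly once. One small caveat: your remark that the parenthetical expression ``follows from the identity $L_1/L=p_0/(2q_0)$'' glosses over the fact that substituting $L=2q_0L_1/p_0$ into $n^2\,4\pi^2q_0^2/L^2$ (equivalently, using $\mu L_1=np_0\pi$) yields $\lambda_n=n^2\pi^2p_0^2/L_1^2$, not $n^2\,4\pi^2p_0^2/L_1^2$ as printed in the statement — a factor-of-$4$ typo in the paper's parenthetical rather than a gap in your argument, whose logic is sound throughout.
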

\vskip5pt
\noindent

\noindent
We stress the fact that for $L_{1}/L\in \mathbb{R}\setminus \mathbb{Q}$  the lost eigenvalues $\lambda$ of the operator $H_{\mathcal{G}}$ become resonances, i.e. poles of the meromorphic continuation of the resolvent $(H_{\mathcal G}-k^2)^{-1}$ through the real axis to $\C^-$ (notice the change of the spectral parameter $k^2=\lambda$). The subject of these so called {\it topological resonances} is studied in several recent papers, see for example \cite{ExLip10, DP11, GSS13, LZw16, CdVTruc16}.

\vskip5pt
\noindent
As a final remark we want to show in a direct way that the compactly supported solutions of cnoidal type of the NLS equation on the double-bridge graph bifurcate, when the parameter $\omega\to \lambda_n$, from the linear eigenvectors $E_{\lambda_n}$ of the double bridge linear quantum graph discussed above.
\par\noindent
Consider the solution \eqref{cnoidalomega} having the same period $\frac{L}{nq_0}$ of the
eigenfunctions in $E_{\lambda _{n}}$, namely 
\begin{align}
u_{n,\omega }^\pm(x) &=\sqrt{\frac{2k_{n}^{2}\omega }{1-2k_{n}^{2}}}
\cn \left(\sqrt{\frac{\omega }{1-2k_{n}^{2}}}\left( x \pm \frac{T_{\omega }\left(k_{n}\right) }{4}\right) ;k_{n}\right), \\
n q_0 T_{\omega }\left(k_{n}\right) &=L, \quad
n\frac{p_0}{2}T_{\omega }\left( k_{n}\right) =L_{1}.
\label{branch}
\end{align}
Define $W\left( k\right) =\sqrt{1-2k^{2}}K\left( k\right) $ in such a way
that $T_{\omega }\left( k_{n}\right) =4W\left( k_{n}\right) /\sqrt{\omega }$
and thus 
\begin{equation}
W\left( k_{n}\right) =\frac{L\sqrt{\omega }}{4nq_0}=\frac{\pi }{2}\sqrt{\frac{\omega }{\lambda _{n}}}.
\label{W(k_n)}
\end{equation}
Notice that the function $W$ is strictly decreasing for $k\in \left( 0,1/\sqrt{2}\right) $ and satisfies 
\[
W\left( k\right) 
=\left( 1-k^{2}+o\left( k^{2}\right) \right) \left( \frac{\pi }{2}+\frac{\pi }{8}k^{2}+o\left( k^{2}\right) \right) 
=\frac{\pi }{2}-\frac{3\pi }{8}k^{2}+o\left( k^{2}\right) \text{\quad as }k\rightarrow 0.
\]
Hence
\[
\lim_{t\rightarrow \left( \pi /2\right) ^{-}}\frac{W^{-1}\left( t\right) }{\sqrt{\pi /2-t}}
=\lim_{k\rightarrow 0}\frac{k}{\sqrt{\pi /2-W\left( k\right) }}
=\sqrt{\frac{8}{3\pi }}
\]
and therefore 
\begin{equation}
W^{-1}\left( t\right) \sim \sqrt{\frac{8}{3\pi }\left( \frac{\pi }{2}-t\right) }
\text{\quad as }t\rightarrow \left( \frac{\pi }{2}\right) ^{-}.
\label{W^-1}
\end{equation}

Putting $\omega =\omega _{\varepsilon }=\lambda _{n}-\varepsilon $ in (\ref{branch}) we obtain
\[
u_{n,\omega _{\varepsilon }}^\pm(x)
=\sqrt{\frac{2k_{n}^{2}\left( \lambda_{n}-\varepsilon \right) }{1-2k_{n}^{2}}}
\cn \left( \sqrt{\frac{\lambda_{n}-\varepsilon }{1-2k_{n}^{2}}}\left( x \pm \frac{L}{4nq_0}\right) ;k_{n}\right).
\]
As $\varepsilon \rightarrow 0$, we have that 
$\frac{\pi }{2}\sqrt{\frac{\omega }{\lambda _{n}}}\rightarrow \left( \frac{\pi }{2}\right) ^{-}$ 
and therefore, using (\ref{W(k_n)}) and (\ref{W^-1}), we get that
\begin{equation*}
k_{n}
=W^{-1}\left( \frac{\pi }{2}\sqrt{\frac{\omega }{\lambda _{n}}}\right)
\sim \sqrt{\frac{4}{3}}\sqrt{1-\sqrt{1-\frac{\varepsilon }{\lambda _{n}}}} 
\sim \sqrt{\frac{2\varepsilon }{3\lambda _{n}}}
\quad \text{\quad as } \varepsilon \rightarrow 0
\end{equation*}
and thus
\[
\sqrt{\frac{2k_{n}^{2}\left( \lambda _{n}-\varepsilon \right) }{1-2k_{n}^{2}}}
\sim \sqrt{\frac{4\varepsilon }{3}}\quad \text{\quad as }\varepsilon \rightarrow
0.
\]
On the other hand, as $\varepsilon \rightarrow 0$ we have that $k_{n}\rightarrow 0$ and 
\[
\sqrt{\frac{\lambda _{n}-\varepsilon }{1-2k_{n}^{2}}}\left( x \pm \frac{L}{4nq_0}\right) 
\rightarrow \sqrt{\lambda _{n}}\left( x\pm \frac{L}{4nq_0}\right) .
\]
Hence $\cn \left( \sqrt{\frac{\lambda _{n}-\varepsilon }{1-2k_{n}^{2}}}\left( x \pm \frac{L}{4nq_0}\right) ;k_{n}\right) $ 
tends pointwise to
\[
\cn \left( \sqrt{\lambda _{n}}\left( x \pm \frac{L}{4nq_0}\right) ;0\right) 
=\cos\left( \sqrt{\lambda _{n}}\left( x \pm \frac{L}{4nq_0}\right) \right) 
=\cos \left(n\frac{2\pi q_0}{L}x \pm \frac{\pi }{2}\right) 
=\mp \sin \left( n\frac{2\pi q_0}{L} x\right) .
\]
As a conclusion, we deduce that for every $x$ and $n$ one has for $\varepsilon \rightarrow 0$
\begin{align*}
u_{n,\omega _{\varepsilon }}^\pm(x)
=\left( \sqrt{\frac{4\varepsilon}{3}}+o\left( \sqrt{\varepsilon }\right) \right) 
\left( \mp\sin \left( n\frac{2\pi q_0}{L}x\right) +o\left( 1\right) \right) 
=\mp\sqrt{\frac{4\varepsilon}{3}}\sin\left( n\frac{2\pi q_0}{L}x\right) 
+o\left( \sqrt{\varepsilon }\right).
\end{align*}

\section*{References}

\end{document}